\numberwithin{figure}{section}
\titleformat{\section}{\vskip10pt\large\bfseries}{\thesection.}{0.5em}{\centering\vspace{5pt}}
\titleformat{\subsection}{\vskip10pt\normalsize\bfseries}{\thesubsection.}{0.5em}{}
\def\opn#1#2{\def#1{\operatorname{#2}}} 
\opn\chara{char} \opn\length{\ell}
\opn\projdim{proj\,dim} \opn\injdim{inj\,dim} \opn\rank{rank}
\opn\depth{depth} \opn\grade{grade} \opn\height{height}
\opn\embdim{emb\,dim} \opn\codim{codim}
\opn\Tr{Tr} \opn\bigrank{big\,rank}
\opn\superheight{superheight}\opn\lcm{lcm}
\opn\trdeg{tr\,deg}%
\opn\reg{reg} \opn\lreg{lreg}
\opn\Ker{Ker} \opn\Coker{Coker} \opn\Im{Im} \opn\Hom{Hom}
\opn\Tor{Tor} \opn\Ext{Ext} \opn\End{End} \opn\Aut{Aut} \opn\id{id}
\opn\nat{nat}
\opn\pff{pf}
\opn\Pf{Pf} \opn\GL{GL} \opn\SL{SL} \opn\mod{mod} \opn\ord{ord}
\let\to=\rightarrow
\def\Implies{\ifmmode\Longrightarrow \else
     \unskip${}\Longrightarrow{}$\ignorespaces\fi}
\def\implies{\ifmmode\Rightarrow \else
     \unskip${}\Rightarrow{}$\ignorespaces\fi}
\def\iff{\ifmmode\Longleftrightarrow \else
     \unskip${}\Longleftrightarrow{}$\ignorespaces\fi}
\newtheorem{theorem}{Theorem}[section]
\newtheorem{lemma}[theorem]{Lemma}
\newtheorem{corollary}[theorem]{Corollary}
\newtheorem{remark}[theorem]{Remark}
\newtheorem{Example}[theorem]{Example}
\theoremstyle{definition}
\def\d{{\mathrm d}}
\def\C{{\mathbb C}}
\def\R {{\mathbb R}}
\def\le{\leqslant}
\def\ge{\geqslant}
\def\Omega{\varOmega}
\def\Delta{\varDelta}
\title{Numerical analysis of 2D Navier--Stokes equations\\ with nonsmooth initial value in the critical space}
\date{\today}
\author[]{Buyang Li$^*$ \and\, Qiqi Rao$^*$ \and\, Hui Zhang$^{*}$\and\, Zhi Zhou$^{*}$}
\date{}
\thanks{$^*$Department of Applied Mathematics, The Hong Kong Polytechnic University, Hong
Kong. Email address: buyang.li@polyu.edu.hk, qi-qi.rao@connect.polyu.hk, hui1203.zhang@polyu.edu.hk, zhizhou@polyu.edu.hk}
\begin{document}
\maketitle

\textbf{Abstract.}
This paper addresses the numerical solution of the two-dimensional Navier--Stokes (NS) equations with nonsmooth initial data in the $L^2$ space, which is the critical space for the two-dimensional NS equations to be well-posed. In this case, the solutions of the NS equations exhibit certain singularities at $t=0$, e.g., the $H^s$ norm of the solution blows up as $t\rightarrow 0$ when $s>0$. To date, the best convergence result proved in the literature are first-order accuracy in both time and space for the semi-implicit Euler time-stepping scheme and divergence-free finite elements (even high-order finite elements are used), while numerical results demonstrate that second-order convergence in time and space may be achieved. Therefore, there is still a gap between numerical analysis and numerical computation for the NS equations with $L^2$ initial data. The primary challenge to realizing high-order convergence is the insufficient regularity in the solutions due to the rough initial condition and the nonlinearity of the equations. In this work, we propose a fully discrete numerical scheme that utilizes the Taylor--Hood or Stokes-MINI finite element method for spatial discretization and an implicit-explicit Runge--Kutta time-stepping method in conjunction with graded stepsizes.  By employing discrete semigroup techniques, sharp regularity estimates, negative norm estimates and the $L^2$ projection onto the divergence-free Raviart--Thomas element space, we prove that the proposed scheme attains second-order convergence in both space and time. Numerical examples are presented to support the theoretical analysis. In particular, the convergence in space is at most second order even higher-order finite elements are used. This shows the sharpness of the convergence order proved in this article.
\\

\textbf{Key words.} Navier--Stokes equations, nonsmooth initial data, linearly implicit, Runge-Kutta method, analytic semigroup, error estimate, second-order convergence.
\\

\textbf{MSC codes.} 65M12, 65M15, 76D05

\setlength\abovedisplayskip{4pt}
\setlength\belowdisplayskip{4pt}

\section{Introduction}\label{section:intro}
We denote by $\Omega$ a convex polygonal domain in $\R^2$ and consider the Navier--Stokes (NS) equations on $\Omega$ with the no-slip boundary condition up to a given time $T>0$, i.e.,
\begin{equation}\label{system-1}
    \left\{\begin{array}{rclll}
        \partial_t u + u\cdot \nabla u - \Delta u + \nabla p & \!\!\!\!= & \!\!\!\!0 & \mbox{in }& \Omega\times (0,T],\\
	\nabla \cdot u  &\!\!\!\! = & \!\!\! \! 0 & \mbox{in} & \Omega \times (0,T],\\
        u & \!\!\!\! = & \!\!\!\! 0 & \mbox{on} & \partial\Omega\times(0,T],\\
        u & \!\!\!\! = & \!\!\!\! u_0 & \mbox{on} & \Omega\times\{0\} ,
    \end{array}\right.
\end{equation}
where $\partial \Omega$ denotes the boundary of domain $\Omega$.
In particular, we assume that the initial value $u_0$ belongs to $ \dot L^{2}(\Omega) $, which is defined as
\begin{equation}
    \dot L^{2} (\Omega) = \{v \in L^{2}(\Omega)^{2}: \nabla \cdot v = 0\; \text{in} \; \Omega, v \cdot \nu = 0  \;\text{on}\;\partial\Omega\},
\end{equation}
where $ \nu $ denotes the unit outward normal vector on $ \partial \Omega $. It is known that problem \eqref{system-1} has a unique weak solution $ u \in L^{2}(0,T;\dot H_{0}^{1}(\Omega)) \cap H^{1}(0,T;\dot H^{-1}(\Omega)) \hookrightarrow C ([0,T]; \dot L^{2}(\Omega)) $, where $ \dot H_{0}^{1}(\Omega) = \{v \in H_{0}^{1}(\Omega)^{2}: \nabla \cdot v = 0\} $ and $ \dot H^{-1}(\Omega) $ is the dual space of $ \dot H_{0}^{1}(\Omega) $; see \cite{temam1977navie} for a rigorous proof of this result. The uniqueness of solution $p$ can be guaranteed by requiring $p \in L_{0}^{2}(\Omega) := \{v \in L^{2}(\Omega): \int_{\Omega} v \, \d x = 0\} $.

The NS equations are the fundamental partial differential equations describing the motion of incompressible viscous fluids. They are widely used in fluid dynamics to model water and blood flows, air flow around a wing, and ocean currents. As exact solutions are unknown for most practical applications, numerical solutions of the NS equations are of paramount importance. Error estimates can be obtained based on the regularity assumptions of the solution and the initial data.
Optimal error estimates for high-order methods can be proved when the solutions to the Navier-Stokes equations are sufficiently regular, meaning they are sufficiently smooth and adhere to the compatibility conditions.
For example, if the initial values are sufficiently smooth, i.e. $u_0\in \dot{H}_0^1(\Omega)\cap H^2(\Omega)^2$ or above, then optimal-order convergence of temporal and spatial discretizations of the NS equations have been proved for various methods in \cite{Badia2007,Bermejo2012,Bosco2021,Heywood1990,Labovsky2009,Ross2013,Rebholz2007,Tone2004,heyinnian2007,Shen1992,Shen2,MR4656794},
where the finite element and spectral Galerkin methods were usually used for spatial discretization, and the time-stepping schemes include varies of the Crank--Nicolson method, Euler method, two-step backward difference formula, projection methods, fractional step methods and so on.
However, the error estimates discussed in the aforementioned articles are not applicable to nonsmooth initial data.

When the initial value $u_0$ belongs to the space $\dot{H}_0^1(\Omega)$, a number of numerical analyses for the Navier-Stokes equations are available. The analysis in \cite{MR1424303} essentially proves almost first-order convergence in time of the Runge--Kutta method for the two-dimensional NS equations when the initial value is in $\dot{H}_0^1(\Omega)$. In \cite{Hill2000}, Hill and S\"uli proved second-order convergence of the semidiscrete finite element method.
For the implicit-explicit finite element method, first-order convergence in time and second-order convergence in space were proved under condition $\tau |\log h| \le C$ in \cite{Heyinnian2008}, where $\tau$ and $h$ are the temporal stepsize and spatial mesh size, respectively.
Additionally, the error of semi-discretization in time by the Crank--Nicolson/Adams--Bashforth implicit-explicit scheme with a uniform stepsize was shown to be $O(\tau^{\frac32})$ in \cite{Heyinnian2012}. This convergence rate is sharp with respect to the empirical numerical results. Second-order convergence in time and space was proved for a linearly extrapolated Crank--Nicolson scheme and a two-step backward differentiation formula by utilizing graded stepsizes locally refined towards $t=0$; see \cite{Chu2023,Na2021}.

Discussions concerning the case that $u_0\in \dot{L}^2(\Omega)$ are less prevalent in the literature.
It has been known that $\dot{L}^2(\Omega)$ is  a critical space for the well-posedness of the two-dimensional NS equations \cite{Gallagher2018}.
The error analysis in this case turns out to be significantly more challenging than for cases with smoother initial data, and the literature offers only a limited number of relevant results. Under the CFL condition, $\tau \leq C\lambda_m^{-1}$, it was shown in \cite{Heyinnian22008} that the implicit-explicit Euler spectral Galerkin method has an error bound of $O(\lambda_m^{-1/2} + \tau^{1/2})$ over a bounded time interval. For the implicit-explicit Euler scheme with finite element  spatial discretization, several stability results were proved in \cite{Hesun2007} without error estimates. In more recent developments, first-order convergence in both time and space was shown in \cite{2DNS2022Li} for high-order divergence-free finite elements. To our knowledge, this represents the most advanced convergence result obtained to date. However, there is still a gap between the numerical analysis and the numerical results, which demonstrate the possibility of achieving second-order convergence in space by using the Taylor--Hood finite elements. Proving second-order convergence of any numerical method for the NS equations remains an open and challenging task. Furthermore, the employed time-stepping scheme is of low order. developing higher-order schemes (with rigorous proof of the convergence rates) presents additional challenges due to limited smoothness of the solution and the nonlinearity of the NS equations.
Recently, the construction and analysis of low-regularity integrators for nonlinear dispersive equations and NS equations based on energy techniques as well as harmonic analysis techniques become an active research area; see \cite{MR4471050,MR4269650,MR4166458,MR4497822,MR4269650}. The analyses in these articles generally require discovering and utilizing certain cancellation structures in the equations. An application of the general framework in \cite{MR4275500} to the NS equations was shown in \cite{MR4471050}. Since this approach does not use the smoothing property of the NS equations (thus the results are independent of the viscosity of fluid), it requires the initial value to be in $\dot H^1_0(\Omega)\cap H^2(\Omega)$ for the numerical solution to have first-order convergence in time.

In this paper, we consider a fully discrete implicit-explicit Runge--Kutta finite element scheme for the NS equations with $L^2$ initial data by utilizing an $L^{2}$ projection $P_{h}^{\text{RT}}$ onto the divergence-free subspace of the Raviart--Thomas element space in the numerical scheme. The linear term is discretized using the Runge--Kutta Lobatto IIIC scheme, while the nonlinear term is handled through an extrapolation approximation. To address the solution's singularity near $t=0$, we employ graded stepsizes that provide enhanced resolution where needed.
We prove the a nearly optimal error estimate. More specifically, let $u_h^{n+1}$ be the numerical solution of the fully discrete scheme at time level $t = t_{n+1}$.
Theorems \ref{thsemi} and \ref{thfull} show that, for arbitrarily small $\varepsilon>0$,
\begin{equation*}
\| u (t_{n+1})  - u_h^{n+1} \|_{L^{2}(\Omega)} \le C_\varepsilon (h^{2-2\varepsilon} t_{n+1}^{\varepsilon-1} + t_{n+1}^{-2-\varepsilon}\tau_{n+1}^{2}),
\end{equation*}
where $\tau_{n+1}$ and $h$ denote the temporal stepsize of the $(n+1)$th step and spatial mesh size, respectively. A crucial element in our error analysis is the utilization of the $L^{2}$ projection $P_{h}^{\text{RT}}$, which plays a key role in achieving second-order convergence in space and in deriving discrete energy decay, as detailed in Lemma \ref{leelle3.1}. Our analysis also employs the discrete semigroup technique and the estimate of numerical solution in $H^1$ norm (Lemma \ref{H1-estimate-numerical-solution}), as well as some negative norm error estimates (Lemma  \ref{H_negative_estimate_error}). The choice of the Lobatto IIIC scheme is also critical for our analysis due to its distinctive property that the second internal stage coincides with the endpoint of the time interval. This property is extensively used in the stability estimates, e.g., Lemma \ref{H1-estimate-numerical-solution}.
Numerical examples are provided to support the theoretical analysis, which show that the numerical solutions of the NS equations with $L^2$ initial data achieve second-order convergence in both time and space. This is consistent with our theoretical analysis. Moreover, the convergence in space is at most second order even higher-order finite elements are used. This shows the sharpness of the convergence order proved in this paper.


The rest of this paper is organized as follows. In Section \ref{sec:fem}, we describe the finite element method for the spatial discretization using Taylor--Hood or Stokes-MINI element, and present the error analysis of the semi-discrete scheme. The fully discrete scheme is developed and analyzed in Section \ref{sec:fully}. Some numerical experiments are shown in Section \ref{sec:numerics} to support and complement our theoretical analysis. Finally, the conclusion is given in Section \ref{conclusion}.

\section{Spatial semi-discretization by finite element method}%
\label{sec:fem}
For $s\ge 0$ and $1\le p \le \infty$, we denote by $W^{s,p}(\Omega)$ the conventional Sobolev spaces of functions defined on $\Omega$, with abbreviation $H^s(\Omega) = W^{s,2}(\Omega)$ and $L^p(\Omega)= W^{0,p}(\Omega)$. 
For the simplicity of notation, we denote by $\|\cdot\|_{W^{s,p}(\Omega)}$ the norm of the spaces $W^{s,p}(\Omega)$, $W^{s,p}(\Omega)^2$ and $W^{s,p}(\Omega)^{2\times2}$, omitting the dependence on dimension.

Let $ \dot H_{0}^{1}(\Omega) = \{v \in H_{0}^{1}(\Omega)^{2}: \nabla \cdot v = 0\} $ and let $\dot H^s_0(\Omega)=(\dot L^2(\Omega), \dot H^1_0(\Omega))_{[s]}$ be the complex interpolation space between $\dot L^2(\Omega)$ and $ \dot H^1_0(\Omega)$. The dual space of $\dot H^s_0(\Omega)$ is denoted by $\dot H^{-s}(\Omega)$.



\subsection{Weak solution}%
\label{sub:Weak solution}

Let $ P_{X} $ be the $ L^{2} $-orthogonal projection from $L^{2}(\Omega)^2 $ to $ \dot L^{2}(\Omega) $. Then any function $ v \in L^{2}(\Omega)^2$ has a decomposition
\begin{equation}
    v = P_{X} v + \nabla \eta,
\end{equation}
where $ \eta \in H^{1}(\Omega) \cap L_0^2 (\Omega) $ satisfies the following elliptic equation with Neumann boundary condition
\begin{equation*}
    \left\{\begin{array}{rcc}
	\Delta \eta  =  \nabla   \cdot v  & \text{in} & \Omega,\\
	\frac{\partial \eta}{\partial \nu} = v \cdot \nu & \text{on} & \partial \Omega.
    \end{array}\right.
\end{equation*}
Since $ \nabla p $ is orthogonal to  $ \dot L^{2}(\Omega) $ for any function $ p \in H^{1}(\Omega) $, it follows that $ P_{X} \nabla p = 0 $.

We denote by $ A := P_{X} \Delta$ the Stokes operator on $ \dot L^{2}(\Omega) $ with domain $D(A)=\dot H_0^1 (\Omega)\cap H^2(\Omega)^2$, which is a self-adjoint operator and negative-definite. The Stokes operator has an extension to a bounded operator $A:\dot H_0^1 (\Omega) \rightarrow \dot H^{-1}(\Omega)$ defined by
\begin{align}\label{project11-equation}
(Au,v)=-\int_{\Omega}\nabla u\cdot \nabla v\d x\ \ \forall u,v \in \dot H_0^{1}(\Omega).
\end{align}
By applying $P_X$ to the first equation in \eqref{system-1}, we obtain the following abstract parabolic equation in terms of the Stokes operator $A$:
\begin{align}\label{project-equation}
	\partial_t u - A u = - P_{X} (u \cdot \nabla u)\; \;\text{in}\; \;\Omega \times (0,T].
\end{align}
The the weak solution of \eqref{project-equation} can be expressed as
\begin{align}\label{mild-solution}
    u (\cdot, t) = e^{t A} u_{0} - \int_{0}^{t} e^{(t-s)A} P_{X} (u (s)\cdot \nabla u (s)) \d s.
\end{align}
The properties of operator $ A $ are similar to the Laplacian operator $ \Delta $. For example, for any functions $ v, w \in \dot H_{0}^{1}(\Omega) $, $ (A v,w) = -(\nabla v, \nabla w) $.

We recall the following regularity estimate of the solution proved in \cite[Lemma 3.2]{2DNS2022Li}.
\begin{lemma} \label{regularity-lemma-u}
For any given initial value $u_0\in\dot L^{2}(\Omega)$, the exact solution $ u $ of problem \eqref{system-1} satisfy the following regularity result.
\begin{align}\label{regularity-result-u}
    \|\partial_t^mu (\cdot,t)\|_{H^{s}(\Omega)} \le C t^{-\frac{s}{2}-m }\;\text{for}\;\  0 \le s \le 2,\ m=0,1,2,\ldots
\end{align}
\end{lemma}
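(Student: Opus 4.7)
The plan is to derive the estimates directly from the mild solution formula \eqref{mild-solution} by exploiting the smoothing property of the analytic Stokes semigroup $\{e^{tA}\}_{t\ge 0}$, namely $\|(-A)^\alpha e^{tA}\|_{\dot L^2\to\dot L^2}\le C t^{-\alpha}$ for $\alpha\ge 0$, together with a bootstrap that handles the quadratic nonlinearity via the 2D Ladyzhenskaya inequality $\|v\|_{L^4(\Omega)}\le C\|v\|_{L^2(\Omega)}^{1/2}\|\nabla v\|_{L^2(\Omega)}^{1/2}$. Throughout, I would identify the scale of Sobolev spaces with the domain of fractional powers of $-A$: $\dot H^s_0(\Omega)=D((-A)^{s/2})$ for $s\in[0,2]$, so that $\|v\|_{H^s(\Omega)}\simeq\|(-A)^{s/2}v\|_{L^2(\Omega)}$ on the divergence-free subspace.

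First I would handle the case $m=0$. Testing \eqref{project-equation} by $u$ and using $(P_X(u\cdot\nabla u),u)=0$ yields the standard 2D energy identity, which gives $\|u(t)\|_{L^2}\le\|u_0\|_{L^2}$ and $\int_0^T\|\nabla u\|_{L^2}^2\,\d s\le\tfrac12\|u_0\|_{L^2}^2$; this covers $s=0$. For $s\in(0,2]$ I would apply $(-A)^{s/2}$ to \eqref{mild-solution} and estimate
\begin{align*}
\|(-A)^{s/2}u(t)\|_{L^2}\le Ct^{-s/2}\|u_0\|_{L^2}+\int_0^t \|(-A)^{(s+1)/2}e^{(t-r)A}\|_{L^2\to L^2}\,\|(-A)^{-1/2}P_X(u\cdot\nabla u)(r)\|_{L^2}\,\d r,
\end{align*}
where, using the duality $\|(-A)^{-1/2}P_X(u\cdot\nabla u)\|_{L^2}\lesssim\|u\otimes u\|_{L^2}\lesssim\|u\|_{L^4}^2\lesssim\|u\|_{L^2}\|\nabla u\|_{L^2}$, one can close a Gronwall-type inequality with the energy bound. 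This delivers $\|u(t)\|_{H^s(\Omega)}\le Ct^{-s/2}$ first for $s\in[0,1]$ and then, by a second iteration using the stronger $H^1$ bound just obtained, for $s\in[1,2]$.

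For the temporal derivatives ($m\ge 1$) I would proceed by induction on $m$. Formally differentiating \eqref{project-equation} gives $\partial_t^{m+1}u=A\partial_t^m u-P_X\partial_t^m(u\cdot\nabla u)$. Since $A$ commutes with $\partial_t$ and $(-A)^{1/2}e^{tA}$ and its powers provide the required smoothing, I would represent $\partial_t^m u$ via a Duhamel formula starting from an intermediate time $t/2$, where the inductive bound $\|\partial_t^{m-1}u(t/2)\|_{H^\sigma}\le Ct^{-\sigma/2-(m-1)}$ is already known for all admissible $\sigma$. The factor $t^{-m}$ then emerges by distributing one power of $t$ from the smoothing $(-A)^{1/2}e^{(t-r)A/2}$ against one $\partial_t$ each time, while the Leibniz expansion of $\partial_t^m(u\cdot\nabla u)$ is controlled term-by-term using the inductive bounds in $L^4$ and $H^1$.

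The main obstacle is the nonlinear term: controlling $\partial_t^m(u\cdot\nabla u)$ in the correct negative Sobolev norm without losing the sharp scaling $t^{-s/2-m}$. The borderline 2D algebra forces one to split $u\cdot\nabla u$ into factors estimated in $L^4$ rather than $L^\infty$, and to absorb the resulting $\|\nabla u\|_{L^2}^2$-type integrands using the global energy bound $\int_0^T\|\nabla u\|_{L^2}^2\,\d s\le\|u_0\|_{L^2}^2$. Once this is done carefully — for instance by first proving the $s\in[0,1]$ part and then upgrading to $s\in[1,2]$ via a separate bootstrap, and by always starting the Duhamel representation at $t/2$ rather than $0$ when handling $\partial_t^m$ — the stated estimate \eqref{regularity-result-u} follows.
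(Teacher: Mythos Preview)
The paper does not actually prove this lemma: it simply cites \cite[Lemma~3.2]{2DNS2022Li}. So there is no in-paper proof to compare against.

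Your outline is essentially the standard semigroup-plus-bootstrap argument and is the approach one expects in the cited reference. The energy identity gives the $s=0$ case and the crucial bound $\int_0^T\|\nabla u\|_{L^2}^2\,\d s\le C$; the mild formulation together with the smoothing $\|(-A)^\alpha e^{tA}\|\le Ct^{-\alpha}$ and the 2D product estimate $\|u\cdot\nabla u\|_{\dot H^{-1}}\lesssim\|u\|_{L^2}\|\nabla u\|_{L^2}$ then yields the $s\in(0,1)$ range; a second pass using the freshly obtained $\|u(t)\|_{H^1}\le Ct^{-1/2}$ promotes this to $s\in[1,2]$; and the induction on $m$ via differentiation of \eqref{project-equation} and a Duhamel representation started at $t/2$ handles the time derivatives.

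One point deserves more care than your sketch gives it: at the endpoint $s=1$ the naive estimate produces the non-integrable kernel $(t-r)^{-1}$ in the Duhamel integral. Your remark about ``borderline 2D algebra'' signals awareness, but the fix is not merely to absorb $\|\nabla u\|_{L^2}^2$ into the energy bound; you must either (i) introduce an $\varepsilon$ of room as in the paper's own later arguments (cf.\ the proof of Theorem~\ref{thsemi}, where the kernel becomes $(t-r)^{-1+\varepsilon/2}$ via the $W^{-1,r}$ smoothing in Lemma~\ref{A-stability}), or (ii) switch to an energy argument, testing $\partial_t u = Au - P_X(u\cdot\nabla u)$ with $-Au$ and using Agmon's inequality $\|u\|_{L^\infty}\lesssim\|u\|_{L^2}^{1/2}\|Au\|_{L^2}^{1/2}$ together with the mean-value selection of a good starting time $t_0\in(0,t/2)$ where $\|\nabla u(t_0)\|_{L^2}^2\lesssim t^{-1}$. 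Either route closes; just be explicit about which one you take when you write it up.
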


The exponential operator $ e^{tA}  $ plays a crucial role in the error analysis. By taking Laplace transform and inverse Laplace transform, we have
\begin{align*}
    e^{tA} = \int_{| z | = \sigma} e^{zt} (z-A)^{-1}\d z,
\end{align*}
for some constant $ \sigma > 0 $. Due to the analyticity of $ e^{zt} (z-A)^{-1} $ in the sector $\{z \in \C: | \text{arg}(z) | \le \pi \} $, the straight line $ | z | = \sigma$ in the complex plane can be deformed to a contour $ \Gamma_{\delta,\kappa} $
\begin{align*}
    \Gamma_{\delta,\kappa} = \{\kappa e^{i \theta}: - \delta \le \theta \le \delta \}\cup \{\rho e^{\pm i \delta}: \kappa \le \rho < \infty \}.
\end{align*}
Hence, the operator $ e^{tA} $ has the form
\begin{align}\label{exponential-operator}
    e^{tA} = \int_{\Gamma_{\delta,\kappa}}e^{zt} (z-A)^{-1}\d z.
\end{align}
The stability estimate of the operator $ e^{tA}  $ then follows from the estimate of the resolvent operator $ (z-A)^{-1} $.
\begin{lemma}\label{A-stability}
    The operator $ e^{tA} P_X $ satisfies the following stability estimates.
    \begin{align}
	    \|e^{tA}P_X f\|_{L^{2}(\Omega)} \le & \|f\|_{L^{2}(\Omega)}, \label{A-stability-1}\\
	    \|e^{tA}P_X f\|_{L^{2}(\Omega)} \le & C t^{-\frac{s}{2} }\|f\|_{H^{-s}(\Omega)}\,\,\,\textrm{for}\,\,\, 0 \le s \le 2, \label{A-stability-2}\\
	    \|e^{tA}P_X f\|_{L^{2}(\Omega)} \le & t^{-\frac{1}{r} }\|f\|_{W^{-1,r}(\Omega)}\,\,\, \textrm{for}\,\,\, 1 < r \le 2. \label{A-stability-3}
    \end{align}
\end{lemma}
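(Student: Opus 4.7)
The plan is to reduce all three estimates to the spectral calculus for the self-adjoint negative-definite operator $A$ on $\dot L^2(\Omega)$ combined with mapping properties of the projection $P_X$. The first estimate \eqref{A-stability-1} is immediate, since the spectral theorem gives $\|e^{tA}\|_{\dot L^2\to\dot L^2}\le 1$ and the orthogonal projection satisfies $\|P_X f\|_{L^2(\Omega)}\le\|f\|_{L^2(\Omega)}$.

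For the second estimate \eqref{A-stability-2}, I use the spectral resolution $E_\lambda$ of $-A$ to write
\[
\|e^{tA}P_X f\|_{L^2(\Omega)}^2
=\int_0^\infty e^{-2t\lambda}\,d\|E_\lambda P_X f\|^2
\le\sup_{\lambda>0}\bigl(\lambda^s e^{-2t\lambda}\bigr)\,\|(-A)^{-s/2}P_X f\|_{L^2(\Omega)}^2
\le C\,t^{-s}\,\|P_X f\|_{\dot H^{-s}(\Omega)}^2,
\]
after which it suffices to prove $\|P_X f\|_{\dot H^{-s}(\Omega)}\le C\|f\|_{H^{-s}(\Omega)}$ for $s\in[0,2]$. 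The $H^2$-regularity of the Neumann Poisson problem on the convex polygon $\Omega$ (used in defining $P_X$) implies that $P_X$ is bounded on $H^\sigma(\Omega)^2$ for $\sigma\in[0,2]$; dualizing then yields the claimed bound on negative Sobolev exponents. Alternatively, this conclusion can be reached through the contour representation \eqref{exponential-operator} combined with the resolvent bounds $\|(z-A)^{-1}P_X\|_{H^{-s}\to L^2}\le C|z|^{s/2-1}$ for $s\in\{0,1,2\}$, a standard parabolic contour integration with $\kappa=1/t$, and complex interpolation in $s$.

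The third estimate \eqref{A-stability-3} then follows from \eqref{A-stability-2} via Sobolev embedding. In two dimensions, for $r\in(1,2]$ with conjugate $r'=r/(r-1)$, the gradient maps $H^{2/r}(\Omega)$ continuously into $H^{2/r-1}(\Omega)\hookrightarrow L^{r'}(\Omega)$, so $H^{2/r}(\Omega)\hookrightarrow W^{1,r'}(\Omega)$; dualizing gives $W^{-1,r}(\Omega)\hookrightarrow H^{-2/r}(\Omega)$, and applying \eqref{A-stability-2} with $s=2/r\in[1,2)$ yields the $t^{-1/r}$ bound. The main obstacle will be the identification $\|P_X f\|_{\dot H^{-s}(\Omega)}\le C\|f\|_{H^{-s}(\Omega)}$ for all $s\in[0,2]$, since it relies on Stokes-type regularity on the convex polygon and on matching the fractional Stokes-operator norms with the ambient Sobolev norms; once that step is settled, the remaining arguments reduce to routine spectral calculus and embedding.
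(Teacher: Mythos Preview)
Your proposal is essentially correct and parallels the paper's argument, though the primary route differs: you use the spectral calculus of the self-adjoint operator $A$ directly (which indeed gives the sharp constant $1$ in \eqref{A-stability-1}), whereas the paper works throughout with the contour representation \eqref{exponential-operator} and the resolvent bounds $\|(z-A)^{-1}P_X\|_{L^2\to H^s}\le C|z|^{s/2-1}$, then dualizes to obtain $\|(z-A)^{-1}P_X\|_{H^{-s}\to L^2}\le C|z|^{s/2-1}$ and integrates over $\Gamma_{\delta,\kappa}$ with $\kappa\sim 1/t$. For \eqref{A-stability-3} both you and the paper invoke the same embedding $W^{-1,r}(\Omega)\hookrightarrow H^{-2/r}(\Omega)$ and apply \eqref{A-stability-2} with $s=2/r$.

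One imprecision to fix: the $H^2$-regularity of the Neumann problem on a convex polygon only yields $\|\eta\|_{H^{\sigma+1}}\le C\|v\|_{H^\sigma}$ for $\sigma\in[0,1]$, hence $P_X$ bounded on $H^\sigma$ for $\sigma\in[0,1]$, not $[0,2]$; extending to $\sigma\in(1,2]$ would require $H^3$ Neumann regularity, which generally fails on polygons. What your argument actually needs---and what you correctly flag at the end---is the embedding $D((-A)^{s/2})\hookrightarrow H^s(\Omega)$ for $s\in[0,2]$, which follows from the \emph{Stokes} regularity $D(A)=\dot H^1_0(\Omega)\cap H^2(\Omega)^2$ on the convex polygon and interpolation. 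The paper's route through the $L^2\!\to\! H^s$ resolvent bound and duality is precisely this, and is the cleanest way to close the step; your ``alternative'' via the contour representation is in fact the paper's proof.
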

\begin{proof}
	The first inequality follows from the relation \eqref{exponential-operator}  and the standard resolvent estimate (see \cite[Theorem 3.7.11]{arendt2011vector})
\begin{align}
	\|(z-A)^{-1}P_X f\|_{L^{2}} \le & C | z |^{-1} \| f \|_{L^{2}(\Omega)} \;\text{for}\;  z \in \Gamma_{\delta,\kappa}.\label{resolvent-l2-l2}
\end{align}

To show the second estimate, we let $w = (z-A)^{-1}P_X f $,
then according to \ref{resolvent-l2-l2} we have $ \|w\|_{L^{2}(\Omega)} \le C | z |^{-1} \|f\|_{L^{2}(\Omega)} $.
This together with the elliptic regularity estimate implies
\begin{equation*}
 \|w\|_{H^{2}(\Omega)} \le \| A w \|_{L^2(\Omega)} \le \|z w - P_X f\|_{L^{2}(\Omega)} \le C \|f\|_{L^{2}(\Omega)},
\end{equation*}
and hence
\begin{equation*}
 \|(z-A)^{-1}P_X f\|_{H^{2}(\Omega)} \le C \| f \|_{L^{2}(\Omega)}.
\end{equation*}
Then by means of interpolation there holds
\begin{align}\label{L2-Hk-resolvent}
  \|(z-A)^{-1}P_X f\|_{H^{s}(\Omega)} \le C | z |^{-1+\frac{s}{2} } \| f \|_{L^{2}(\Omega)} \,\,\,\text{for}\,\,\,0 \le s \le 2.
\end{align}
Since the resolvent operator $ (z-A)^{-1} P_X : L^{2} \to  \dot H_0^{s}$ is self-adjoint, we have
\begin{align}\label{L2-H-k-resolvent}
 \|(z-A)^{-1} P_X f \|_{L^{2}} \le C | z |^{-1+\frac{s}{2} } \| f \|_{H^{-s}(\Omega)}\,\,\,\text{for}\,\,\, 0 \le s \le 2.
\end{align}
Then Substituting \eqref{L2-H-k-resolvent} into \eqref{exponential-operator} and evaluating the integral leads to \eqref{A-stability-2}.

To prove \eqref{A-stability-3}, we apply the following embedding estimate in two dimension that
\begin{align}\label{embedding-w-1-p}
	W^{-1,r}(\Omega) \hookrightarrow H^{-2/r}(\Omega)\;\text{for}\;1 < r \le 2.
\end{align}
This, together with \eqref{A-stability-2} with $s = 2/r$, leads to the estimate   \eqref{A-stability-3}.
%
\end{proof}

\subsection{Spatial semi-discretization}%
\label{sub:Spatial semi-discretization}
Let $ \mathcal T_{h} $ denote a shape-regular and quasi-uniform triangulation of mesh size $ h $. We define $ \text{RT}^{1}(\mathcal T_{h}) $ to be the $ \text{H}(\text{div},\Omega) $-conforming Raviart-Thomas finite element space:
\begin{align*}
    \text{RT}^{1}(\mathcal T_{h}) := \{w \in \text{H}(\text{div},\Omega): w|_{K} \in  P_{1}(K)^{2} + x P_{1}(K), \;\forall \; K \in \mathcal T_{h}\}.
\end{align*}
Furthermore, we let $ \text{RT}_{0}^{1}(\Omega) $ be a subspace of $ \text{RT}^{1}(\Omega) $ such that
\begin{align*}
	\text{RT}_{0}^{1}(\mathcal T_{h}):=\{v_{h}\in \text{RT}^{1}(\mathcal T_{h}): \nabla \cdot v_{h} = 0 \; \text{in} \; \Omega \; \text{and}\;v_{h}\cdot \nu = 0\;\text{on}\;\partial \Omega\}.
\end{align*}
Define the $ L^{2} $ projection $ P_{h}^{\text{RT}} $ from $ \dot L^{2} $ to $ \text{RT}_{0}^{1} $, that satisfies
\begin{align}
    (v - P_{h}^{\text{RT}} v, \chi_{h}) = 0\;\text{for}\;\text{any}\;v \in \dot L^{2}(\Omega)\;\text{and}\;\chi_{h} \in \text{RT}_{0}^{1}(\mathcal T_{h}).
\end{align}
The projection $ P_{h}^{\text{RT}} $ satisfies the following estimate for $ v \in X $ (cf. \cite[Eq. (3.4)]{li2022convergent}):
\begin{align}\label{projectionRT_error}
    \| P_{h}^{\text{RT}}v - v  \|_{L^{2}( \Omega )} \le C h^{l} \| v \|_{H^{l}( \Omega )},\;l = 1,2.
\end{align}

Let the pair $ (V_{h}, Q_{h}) \subset ( H_{0}^{1}(\Omega), L^{2}_{0}(\Omega))$ denote the Taylor--Hood element spaces or Stokes-MINI element space, which have the following approximation properties (see \cite{ABF-1984,Verf1984Error,Raviart1986Finite}):
\begin{align}\label{FE-space-Error}
\inf_{v_h\in V_h} \|v-v_h\|_{H^{s}(\varOmega)}
+ \inf_{q_h\in Q_h} \|q-q_h\|_{H^{s-1}(\varOmega)}
\le Ch^{m-s}\|v\|_{H^{m}(\varOmega)},\quad 0\le s\le 1,\quad 1\le m\le 2.
\end{align}
Both the Taylor--Hood and Stokes-MINI finite element spaces satisfy the discrete inf-sup condition, i.e., there is a generic constant $\kappa >0 $ such that
\begin{align}
    \sup_{v_{h}\in V_{h},\nabla v_{h}\neq 0} \frac{( q_{h}, \nabla v_{h} )}{\| \nabla v_{h} \|_{L^{2}( \Omega )}} \ge \kappa \| q_{h} \|_{L^{2}( \Omega )}\quad \forall \; q_{h} \in Q_{h}.
\end{align}

We denote by $ X_{h} := \{v_{h} \in V_{h}: (\nabla \cdot v_{h}, q_{h}) = 0 ~ \forall \, q_{h} \in Q_{h}\} $ the discrete divergence-free subspace of $ V_{h} $,
and define the $ L^{2} $ projection $ P_{X_{h}} $ from $ \dot L^{2}(\Omega) $ onto $ X_{h} $ by the following relation:
\begin{align}
    (v - P_{X_{h}}v, w_{h}) = 0\quad \forall \;w_{h} \in X_{h}.
\end{align}
The semi-discrete scheme for the NS equations in \eqref{system-1} reads: Find  $ (u_{h},p_{h}) \in (V_{h},Q_{h})$ such that
\begin{subequations}\label{semi-scheme}
\begin{align}
	(\partial_{t} u_{h},v_{h}) + (P_{h}^{\text{RT}}u_{h}\cdot \nabla u_{h}, v_{h}) + (\nabla u_{h}, \nabla v_{h}) - (p_{h}, \nabla  \cdot v_{h}) & = 0 \quad \forall \; v_{h} \in V_{h},\label{semi-scheme-a}\\
	(\nabla \cdot u_{h}, q_{h}) & = 0 \quad  \forall \; q_{h} \in Q_{h}.\label{semi-scheme-b}
\end{align}
\end{subequations}
%
Let $ A_{h}: X_h \rightarrow X_h $ be the discrete Stokes operator defined by
\begin{align*}
    (A_{h} v_{h},w_{h}) =  - (\nabla v_{h}, \nabla w_{h})\quad \forall \;v_{h},w_{h} \in X_{h}.
\end{align*}
%
Then, by applying projection operator $ P_{X_{h}} $ to \eqref{semi-scheme}, the semi-discrete scheme in \eqref{semi-scheme} can be rewritten as
\begin{align}\label{FEM-num-solution1}
\partial_t u_h(\cdot,t) - A_h u_h(\cdot,t) = - P_{X_h} (P_{h}^{\text{RT}} u_{h}(s) \cdot \nabla u_{h}(s)),
\end{align}
with initial value $ u_{h}(\cdot,0) = u_{h}^{0} := P_{X_{h}}u_{0} $.
By using Duhamel's formula, the solution to the semidiscrete problem \eqref{FEM-num-solution1} can be written as
\begin{align}\label{FEM-num-solution}
    u_{h}(\cdot,t) = e^{t A_{h}} u_{h}^{0} - \int_{0}^{t} e^{(t-s)A_{h}}P_{X_{h}}(P_{h}^{\text{RT}} u_{h}(s)\cdot \nabla u_{h}(s)) \d s.
\end{align}

\begin{remark}\upshape
If $\varphi_h\in X_h$ and $\varphi\in \dot H^1_0(\Omega)^2\cap H^2(\Omega)^2$ satisfies the following relation:
\begin{align}\label{Afi=Ahfih}
	    A \varphi = A_h \varphi_h.
	\end{align}
then there exist $q\in L^2(\Omega)$ and $q_h\in Q_h$ such that $(\varphi_h,q_h)$ is the Stokes-Ritz projection of $(\varphi,q)$, i.e., Ritz projection associated to the linear Stokes equations. This can be shown as follows: Let $q\in L^2_0(\Omega)$ and $q_h\in Q_h$ be the unique functions (determined via the continuous and discrete inf-sup conditions) such that
\begin{align*}
\begin{aligned}
-(A\varphi,v) &= (\nabla \varphi,\nabla v) - (q,\nabla\cdot v)
&& \forall\, v\in H^1_0(\Omega)^2 , \\
-(A_h\varphi_h,v_h) &= (\nabla \varphi_h,\nabla v_h) - (q_h,\nabla\cdot v_h)
&& \forall\, v_h\in V_h .
\end{aligned}
\end{align*}
Then testing equation $-A \varphi = -A_h\varphi_h$ by $v_h\in V_h$ yields
$$
(\nabla \varphi,\nabla v_h) - (q,\nabla\cdot v_h) = (\nabla \varphi_h,\nabla v_h) - (q_h,\nabla\cdot v_h)
\quad \forall\, v_h\in V_h .
$$
This shows that $(\varphi_h,q_h)$ is the Ritz projection of $(\varphi,q)$ associated to the linear Stokes equations. Moreover, via integration by parts we derive $\nabla q=\Delta\varphi-A\varphi$, which implies that
$$
\|q\|_{H^{l-1}(\Omega)} \le C\|\varphi\|_{H^{l}(\Omega)} \quad\mbox{for}\,\,\, l=1, 2.
$$
Therefore, the standard $L^2$ and $H^1$ error estimates for the Stokes-Ritz projection (see \cite{Raviart1986Finite}) imply the following result:
\begin{align}
	\| \varphi_h - \varphi \|_{L^{2}(\Omega)} + h \| \varphi_h - \varphi \|_{H^{1}(\Omega)}
	&\le C h^l (\| \varphi \|_{H^{l}(\Omega)} + \| q \|_{H^{l-1}(\Omega)}) \notag \\
	&\le C h^l \| \varphi \|_{H^{l}(\Omega)} \quad \text{for}\quad l = 1,2. \label{7ff}
\end{align}
Let $v\in \dot H^1_0(\Omega)^2\cap H^2(\Omega)$ be the solution of the PDE problem $A v = \varphi$, and let $v_h\in X_h$ be the Stokes-Ritz projection of $v$ defined by $A_hv_h=Av =\varphi$. Then testing equation $-A\varphi=-A_h\varphi_h$ yields
\begin{align*}
\|\varphi\|_{L^2(\Omega)}^2
= (-A_h\varphi_h, v-v_h) - (\varphi_h, A_hv_h)
&\le C\|A_h\varphi_h\|_{L^2(\Omega)} \| v-v_h \|_{L^2(\Omega)}
    + C\|\varphi_h\|_{L^2(\Omega)} \| A_hv_h \|_{L^2(\Omega)} \\
&\le \|A_h\varphi_h\|_{L^2(\Omega)} Ch^2 \| v \|_{H^2(\Omega)}
    + C\|\varphi_h\|_{L^2(\Omega)} \| \varphi \|_{L^2(\Omega)} \\
&\le C\|\varphi_h\|_{L^2(\Omega)} \| \varphi \|_{L^2(\Omega)}
    + C\|\varphi_h\|_{L^2(\Omega)} \| \varphi \|_{L^2(\Omega)} ,
\end{align*}
which implies the following $L^2$ stability result:
\begin{align} \label{L2-varphi-varphi_h}
\|\varphi\|_{L^2(\Omega)} \le C\|\varphi_h\|_{L^2(\Omega)} .
\end{align}
By testing equation $-A\varphi=-A_h\varphi_h$ with $\varphi$ we also obtain the following $H^1$ stability result:
\begin{align} \label{H1-varphi-varphi_h}
	\|\varphi\|_{H^1(\Omega)} \le C\|\varphi_h\|_{H^1(\Omega)}  .
\end{align}

\end{remark}

The $L^p$ stability of the projection operator $P_h^{\text{RT}}$ plays a pivotal role in the ensuing error analysis. The following lemma presents a fundamental result crucial for our investigations:
\begin{lemma}
    Let $ \varphi_{h} \in X_h $, and $ 2 \le p \le \infty $, the following inequality holds:
		\begin{align}\label{lp_estimate_RTFEM}
			\| P_{h}^{\text{RT}}\varphi_h \|_{L^{p}(\Omega)}
			\le \| \varphi_h \|_{L^{p}(\Omega)} + C \| \varphi_h \|_{L^{2}(\Omega)}^{\frac{2}{p}} \| \varphi_h \|_{H^{1}(\Omega)}^{1-\frac{2}{p}}.
		\end{align}
\end{lemma}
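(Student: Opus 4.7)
The plan is to split $P_h^{\mathrm{RT}}\varphi_h = \varphi_h + (P_h^{\mathrm{RT}}\varphi_h - \varphi_h)$ via the triangle inequality, and to bound the remainder by combining the $L^2$ approximation estimate \eqref{projectionRT_error} with two standard inverse inequalities on the quasi-uniform mesh $\mathcal{T}_h$. The Gagliardo--Nirenberg-type weights $\|\varphi_h\|_{L^2}^{2/p}\|\varphi_h\|_{H^1}^{1-2/p}$ will emerge from using the inverse estimate $\|\varphi_h\|_{H^1}\le Ch^{-1}\|\varphi_h\|_{L^2}$ to trade a positive power of $h$ for the geometric mean on the right-hand side.

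Concretely, write
\begin{equation*}
\|P_h^{\mathrm{RT}}\varphi_h\|_{L^p(\Omega)} \le \|\varphi_h\|_{L^p(\Omega)} + \|w_h\|_{L^p(\Omega)},
\qquad w_h := P_h^{\mathrm{RT}}\varphi_h - \varphi_h.
\end{equation*}
Since $\varphi_h\in X_h\subset V_h$ and $P_h^{\mathrm{RT}}\varphi_h\in\mathrm{RT}_0^1(\mathcal{T}_h)$ are both piecewise polynomials of bounded degree, so is $w_h$. The local inverse inequality in two dimensions $\|w_h\|_{L^p(K)}\le Ch_K^{2/p-1}\|w_h\|_{L^2(K)}$, summed with the embedding $\ell^p\hookrightarrow\ell^2$ (which is valid because $p\ge 2$) and quasi-uniformity, gives the global inverse estimate
\begin{equation*}
\|w_h\|_{L^p(\Omega)} \le Ch^{2/p-1} \|w_h\|_{L^2(\Omega)}.
\end{equation*}
Plugging in the $l=1$ case of \eqref{projectionRT_error} (applicable since $\varphi_h\in H^1_0(\Omega)^2$) yields $\|w_h\|_{L^p(\Omega)}\le Ch^{2/p}\|\varphi_h\|_{H^1(\Omega)}$.

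Finally, since $\varphi_h\in X_h$ is a finite element function on a quasi-uniform mesh, the classical inverse estimate $\|\varphi_h\|_{H^1(\Omega)}\le Ch^{-1}\|\varphi_h\|_{L^2(\Omega)}$ holds. Assuming $\varphi_h\not\equiv 0$ (the other case being trivial), this gives $h\le C\|\varphi_h\|_{L^2(\Omega)}\|\varphi_h\|_{H^1(\Omega)}^{-1}$, whence
\begin{equation*}
h^{2/p}\|\varphi_h\|_{H^1(\Omega)}
\le C\|\varphi_h\|_{L^2(\Omega)}^{2/p}\|\varphi_h\|_{H^1(\Omega)}^{1-2/p}.
\end{equation*}
Combining with the triangle inequality completes the proof.

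The proof is essentially an assembly of standard finite element tools rather than a deep argument, so no step is genuinely hard. The only minor subtlety is the applicability of \eqref{projectionRT_error} to a $\varphi_h\in X_h$ that is only discretely (not exactly) divergence-free; this is not a real obstacle because the approximation bound on $\mathrm{RT}_0^1(\mathcal{T}_h)$ depends on the $H^1$ regularity of $\varphi_h$, which is under control for any element of $V_h$.
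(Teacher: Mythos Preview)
Your overall architecture matches the paper's: triangle inequality, inverse estimate $L^p\to L^2$ on the piecewise-polynomial difference, then the bound $\|P_h^{\mathrm{RT}}\varphi_h-\varphi_h\|_{L^2}\le Ch\|\varphi_h\|_{H^1}$, and finally trading $h^{2/p}$ for $\|\varphi_h\|_{L^2}^{2/p}\|\varphi_h\|_{H^1}^{-2/p}$ via the inverse inequality. The paper does exactly the same first, second and fourth steps.

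The disagreement is in the third step, and it is not a minor subtlety. The estimate \eqref{projectionRT_error} is stated for $v$ in the \emph{exactly} divergence-free space $\dot L^2(\Omega)$, because $P_h^{\mathrm{RT}}$ projects onto the \emph{divergence-free} subspace $\mathrm{RT}_0^1(\mathcal{T}_h)$. For a function that is not divergence-free, the best $\mathrm{RT}_0^1$ approximation need not be close at all: for instance $v=\nabla\psi$ is $L^2$-orthogonal to every element of $\mathrm{RT}_0^1$, so $P_h^{\mathrm{RT}}v=0$ regardless of how smooth $\psi$ is. Since $\varphi_h\in X_h$ is only discretely divergence-free (for Taylor--Hood or MINI, $\nabla\cdot\varphi_h$ is generally nonzero), you cannot invoke \eqref{projectionRT_error} with $v=\varphi_h$ without further argument. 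Your justification (``the approximation bound depends on the $H^1$ regularity of $\varphi_h$'') overlooks that the relevant approximation class is the divergence-free one.

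The paper closes this gap by passing through the auxiliary PDE solution $\varphi\in\dot H_0^1(\Omega)\cap H^2(\Omega)^2$ defined by $A\varphi=A_h\varphi_h$, for which $\varphi_h$ is the Stokes--Ritz projection. Then
\[
\|P_h^{\mathrm{RT}}\varphi_h-\varphi_h\|_{L^2}
\le \|P_h^{\mathrm{RT}}(\varphi_h-\varphi)\|_{L^2}
+\|P_h^{\mathrm{RT}}\varphi-\varphi\|_{L^2}
+\|\varphi-\varphi_h\|_{L^2},
\]
and now \eqref{projectionRT_error} applies legitimately to the middle term because $\varphi$ \emph{is} divergence-free; the other two terms are handled by $L^2$-stability of $P_h^{\mathrm{RT}}$ and the Stokes--Ritz error bound \eqref{7ff}, together with the stability $\|\varphi\|_{H^1}\le C\|\varphi_h\|_{H^1}$ from \eqref{H1-varphi-varphi_h}. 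This detour is precisely what produces a genuinely divergence-free comparison function $O(h)$-close to $\varphi_h$, and it is the missing ingredient in your argument.
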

\begin{proof}
For a function $\varphi_h\in X_h$, we let $ \varphi $ be the solution to the elliptic PDE problem in \eqref{Afi=Ahfih}. Thus $\varphi_h$ is the Stokes-Ritz projection of $\varphi$, satisfying the estimates in \eqref{7ff}--\eqref{H1-varphi-varphi_h}. Next, we proceed to estimate the $L^{p}$ norm of $P_{h}^{\text{RT}}\varphi_h$ as follows:
\begin{align*}
			\| P_{h}^{\text{RT}}\varphi_h \|_{L^{p}(\Omega)} \le & \| \varphi_h \|_{L^{p}(\Omega)} + \| P_h^{\text{RT}}\varphi_h - \varphi_h \|_{L^{p}(\Omega)} \\
			\le & \| \varphi_h \|_{L^{p}(\Omega)} + C h^{\frac{2}{p} - 1} \| P_{h}^{\text{RT}}\varphi_h - \varphi_h \|_{L^{2}(\Omega)}  \\
			\le & \| \varphi_h \|_{L^{p}(\Omega)}
			  + C h^{\frac{2}{p} - 1} \big(\| P_{h}^{\text{RT}}(\varphi_h - \varphi) \|_{L^{2}(\Omega)}+\| P_{h}^{\text{RT}}\varphi - \varphi \|_{L^{2}(\Omega)} + \| \varphi - \varphi_h \|_{L^{2}(\Omega)}\big).
\end{align*}
By incorporating the error estimates \eqref{projectionRT_error}, \eqref{7ff}, the stability estimate in  \eqref{H1-varphi-varphi_h}, and the $L^{2}$ stability of $P_{h}^{\text{RT}}$, we obtain
\begin{align*}
\| P_{h}^{\text{RT}}\varphi_h \|_{L^{p}(\Omega)}
\le & \| \varphi_h \|_{L^{p}(\Omega)} + C h^{\frac{2}{p} } \| \varphi_h \|_{H^1(\Omega)}
\le  \| \varphi_h \|_{L^{p}(\Omega)} + C \| \varphi_h \|_{L^{2}(\Omega)}^{\frac{2}{p}} \| \varphi_h \|_{H^{1}(\Omega)}^{1-\frac{2}{p}} ,
\end{align*}
where we have used the inverse inequality of finite element functions. This proves the result in \eqref{lp_estimate_RTFEM}.
\end{proof}
		
When $p < \infty$, leveraging the interpolation inequality allows us to eliminate the first term on the right-hand side of \eqref{lp_estimate_RTFEM}. However, in the case when $p = \infty$, we encounter the task of estimating the $L^{\infty}$ norm of a finite element function in $X_h$. To address this, we present the following lemma.


\begin{lemma}\label{discreteinter}
The following inequality holds:
\begin{equation}\label{7eeee}
\| \varphi_h\|_{L^\infty(\Omega)}\le C
\| \varphi_h\|_{L^2(\Omega)}^{\frac{1}{2}}
\|A_h\varphi_h\|_{L^2(\Omega)}^{\frac{1}{2}},\quad \forall \varphi_h\in X_h.
\end{equation}
\end{lemma}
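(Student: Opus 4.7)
The plan is to lift the finite element function $\varphi_h$ to a continuous Stokes companion $\varphi$ (exactly as in the remark preceding the lemma), apply the two-dimensional Agmon inequality to $\varphi$, and then control the $L^\infty$ error $\|\varphi-\varphi_h\|_{L^\infty}$ by combining standard interpolation and inverse estimates, closing the argument with the discrete Stokes inverse inequality $\|A_h\varphi_h\|_{L^2}\le Ch^{-2}\|\varphi_h\|_{L^2}$.

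\emph{Step 1 (Stokes companion and Agmon).} For $\varphi_h\in X_h$, let $\varphi\in \dot H^1_0(\Omega)\cap H^2(\Omega)^2$ be the unique function with $A\varphi=A_h\varphi_h$. By the preceding remark, $\varphi_h$ is the Stokes--Ritz projection of $\varphi$, so the $L^2$ stability \eqref{L2-varphi-varphi_h} and the $H^2$ regularity of the Stokes problem in a convex polygon give
\[
\|\varphi\|_{L^2(\Omega)}\le C\|\varphi_h\|_{L^2(\Omega)},\qquad
\|\varphi\|_{H^2(\Omega)}\le C\|A\varphi\|_{L^2(\Omega)}=C\|A_h\varphi_h\|_{L^2(\Omega)}.
\]
The two-dimensional Agmon (Gagliardo--Nirenberg) inequality then yields
$\|\varphi\|_{L^\infty(\Omega)}\le C\|\varphi\|_{L^2}^{1/2}\|\varphi\|_{H^2}^{1/2}\le C\|\varphi_h\|_{L^2}^{1/2}\|A_h\varphi_h\|_{L^2}^{1/2}$.

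\emph{Step 2 ($L^\infty$ bound on the error $\varphi-\varphi_h$).} Let $I_h\varphi\in V_h$ denote the Lagrange interpolant of $\varphi$, which is well defined since $H^2\hookrightarrow C(\bar\Omega)$ in 2D, and belongs to $V_h$ because $\varphi=0$ on $\partial\Omega$. The standard local interpolation estimate in 2D (Bramble--Hilbert combined with the embedding $H^2(K)\hookrightarrow L^\infty(K)$ on each element) gives $\|\varphi-I_h\varphi\|_{L^\infty(\Omega)}\le Ch\|\varphi\|_{H^2(\Omega)}$, and by the finite element inverse inequality applied to $I_h\varphi-\varphi_h\in V_h$, together with \eqref{7ff},
\[
\|I_h\varphi-\varphi_h\|_{L^\infty(\Omega)}\le Ch^{-1}\|I_h\varphi-\varphi_h\|_{L^2(\Omega)}\le Ch^{-1}\bigl(\|\varphi-I_h\varphi\|_{L^2}+\|\varphi-\varphi_h\|_{L^2}\bigr)\le Ch\|\varphi\|_{H^2(\Omega)}.
\]
Hence $\|\varphi-\varphi_h\|_{L^\infty(\Omega)}\le Ch\|\varphi\|_{H^2(\Omega)}\le Ch\|A_h\varphi_h\|_{L^2(\Omega)}$.

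\emph{Step 3 (closing via the discrete Stokes inverse inequality).} From the definition of $A_h$ on $X_h$, testing with $A_h\varphi_h\in X_h$ and applying the standard $H^1$ inverse inequality twice yields $\|A_h\varphi_h\|_{L^2(\Omega)}\le Ch^{-2}\|\varphi_h\|_{L^2(\Omega)}$. Therefore
\[
h\|A_h\varphi_h\|_{L^2(\Omega)}=\bigl(h^2\|A_h\varphi_h\|_{L^2(\Omega)}\bigr)^{1/2}\|A_h\varphi_h\|_{L^2(\Omega)}^{1/2}\le C\|\varphi_h\|_{L^2(\Omega)}^{1/2}\|A_h\varphi_h\|_{L^2(\Omega)}^{1/2}.
\]
Adding the bounds of Steps~1 and~2 via $\|\varphi_h\|_{L^\infty}\le\|\varphi\|_{L^\infty}+\|\varphi-\varphi_h\|_{L^\infty}$ produces \eqref{7eeee}.

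The hard part will be keeping the $L^\infty$ interpolation error sharp enough to avoid a spurious logarithmic loss: two-dimensional $H^2$ functions lie only barely in $C^0$, so the estimate $\|\varphi-I_h\varphi\|_{L^\infty}\le Ch\|\varphi\|_{H^2}$ must be derived elementwise using $\max_K|\varphi|_{H^2(K)}\le\|\varphi\|_{H^2(\Omega)}$. Everything else is a bookkeeping exercise leveraging the Stokes-Ritz estimates stated just above the lemma together with the discrete Stokes inverse bound.
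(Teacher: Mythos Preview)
Your proposal is correct and follows essentially the same route as the paper: lift $\varphi_h$ to its Stokes companion $\varphi$ via $A\varphi=A_h\varphi_h$, apply Agmon's inequality to $\varphi$, control the discrepancy using the interpolant $I_h\varphi$ and inverse estimates, and absorb the remaining $Ch\|A_h\varphi_h\|_{L^2}$ term via the discrete inverse bound $\|A_h\varphi_h\|_{L^2}\le Ch^{-2}\|\varphi_h\|_{L^2}$. The only cosmetic difference is that the paper splits $\|\varphi_h\|_{L^\infty}\le\|I_h\varphi\|_{L^\infty}+\|I_h\varphi-\varphi_h\|_{L^\infty}$ and invokes the $L^\infty$-stability of $I_h$ directly, whereas you split $\|\varphi_h\|_{L^\infty}\le\|\varphi\|_{L^\infty}+\|\varphi-\varphi_h\|_{L^\infty}$ and need the interpolation estimate $\|\varphi-I_h\varphi\|_{L^\infty}\le Ch\|\varphi\|_{H^2}$; both are standard and your concern about a logarithmic loss is unfounded, since the elementwise scaling in two dimensions gives exactly $Ch$.
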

\begin{proof}
Let $\varphi$ be the solution of equation \eqref{Afi=Ahfih}.
Then the following standard regularity result hold:
\begin{align}
&\|\varphi\|_{H^2(\Omega)}\le C\|A_h\varphi_h\|_{L^2(\Omega)}.\label{8ff}
\end{align}
Therefore, the Sobolev interpolation inequality in \cite[Theorem 5.9]{Adams2003} implies that
\begin{align}
\|\varphi\|_{L^\infty(\Omega)}
&\le C\|\varphi\|_{L^2(\Omega)}^{\frac{1}{2}}  \|\varphi\|_{H^2(\Omega)}^{\frac{1}{2}}
\le C\|\varphi_h\|_{L^2(\Omega)}^{\frac{1}{2}}  \|A_h\varphi_h\|_{L^2(\Omega)}^{\frac{1}{2}} .
\label{9gg}
\end{align}
Using the inverse inequality and the error estimate \eqref{7ff}, we have
\begin{align}
&\|I_h \varphi-\varphi_h\|_{L^\infty(\Omega)}
\le
Ch^{-1}\|I_h \varphi-\varphi_h\|_{L^2(\Omega)}\le Ch\|\varphi\|_{H^2(\Omega)}.
\label{1aa}
\end{align}
  Using this result and the triangle inequality, we can bound  $\|\varphi_h\|_{L^\infty(\Omega)}$ by
\begin{align}
\|\varphi_h\|_{L^\infty(\Omega)}
\le& \|I_h \varphi\|_{L^\infty(\Omega)}+ \|I_h \varphi-\varphi_h\|_{L^\infty(\Omega)} \notag\\
\le& C\|\varphi\|_{L^\infty(\Omega)}+ Ch\|\varphi\|_{H^2(\Omega)}
&&\mbox{($L^\infty$-stability of $I_h$)} \notag\\
\le &C\|\varphi_h\|_{L^2(\Omega)}^{\frac{1}{2}}  \|A_h\varphi_h\|_{L^2(\Omega)}^{\frac{1}{2}} +Ch \|A_h\varphi_h\|_{L^2(\Omega)}
&&\mbox{(here \eqref{8ff} and \eqref{9gg} are used)}\notag\\
\le& C\|\varphi_h\|_{L^2(\Omega)}^{\frac{1}{2}}  \|A_h\varphi_h\|_{L^2(\Omega)}^{\frac{1}{2}}
&&\mbox{(inverse inequality)} .
\label{2aa}
\end{align}
This proves the result of Lemma \ref{discreteinter}.
\end{proof}

The discrete operator $ A_{h} $ has similar property to $ A $, we can obtain the regularity result for the semi-discrete numerical solution $ u_{h} $ in the following lemma. The proof is similar to that of Lemma \ref{regularity-lemma-u}.
\begin{lemma}\label{regularity-numerical-u}
   The semi-discrete solution $ u_{h}$ to problem \eqref{FEM-num-solution1}
   is a function of $ L^{2}(0,T;\dot H_{0}^{1}(\Omega))$ and satisfies
    \begin{align}
        \|\partial_t^mu_{h}(\cdot,t)\|_{H^{s}(\Omega)} \le C t^{-\frac{s}{2}-m }\;\text{for}\ \ 0 \le s \le 1,\ m=0,1,2,\ldots
    \end{align}
\end{lemma}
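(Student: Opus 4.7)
The plan is to mirror the proof of Lemma \ref{regularity-lemma-u} at the discrete level, exploiting the fact that the discrete Stokes operator $A_h$ is self-adjoint and negative-definite on $X_h$, so $e^{tA_h}$ is an analytic semigroup on $X_h$ satisfying $h$-uniform resolvent bounds
$$\|(z-A_h)^{-1}P_{X_h}f\|_{L^{2}(\Omega)}\le C|z|^{-1}\|f\|_{L^{2}(\Omega)},\quad z\in\Gamma_{\delta,\kappa},$$
and, by interpolation with the discrete elliptic regularity estimate $\|(z-A_h)^{-1}P_{X_h}f\|_{H^{1}(\Omega)}\le C|z|^{-1/2}\|f\|_{L^2(\Omega)}$, the analogues of \eqref{A-stability-1}--\eqref{A-stability-2} for $e^{tA_h}P_{X_h}$ restricted to $s\in[0,1]$. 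The $h$-uniformity of these estimates is secured by the stability results \eqref{L2-varphi-varphi_h}--\eqref{H1-varphi-varphi_h} relating $A$ and $A_h$ via the Stokes--Ritz projection.

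For the base case $m=0$, the critical observation is that $(P_{X_h}(P_h^{\mathrm{RT}}u_h\cdot\nabla u_h),u_h)=(P_h^{\mathrm{RT}}u_h\cdot\nabla u_h,u_h)=0$, which follows by integration by parts since $\nabla\cdot P_h^{\mathrm{RT}}u_h=0$ (as $P_h^{\mathrm{RT}}u_h\in\mathrm{RT}_0^1(\mathcal T_h)$) and $P_h^{\mathrm{RT}}u_h\cdot\nu=0$ on $\partial\Omega$. Testing \eqref{FEM-num-solution1} with $u_h$ then gives the energy identity
$$\tfrac12\tfrac{d}{dt}\|u_h\|_{L^{2}(\Omega)}^2+\|\nabla u_h\|_{L^{2}(\Omega)}^2=0,$$
from which one reads off both $\|u_h(t)\|_{L^2}\le\|u_0\|_{L^2}$ (the $s=0$ case) and $u_h\in L^2(0,T;\dot H_{0}^{1}(\Omega))$ with $\int_0^t\|\nabla u_h\|_{L^2}^2\,ds\le\|u_0\|_{L^2}^2$. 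A pigeonhole argument then produces some $t^\ast\in(0,t/2]$ with $\|\nabla u_h(t^\ast)\|_{L^2}\le Ct^{-1/2}\|u_0\|_{L^2}$; restarting \eqref{FEM-num-solution} at $t^\ast$ and combining the $L^2$-to-$H^1$ smoothing estimate for $e^{(t-s)A_h}P_{X_h}$ with Ladyzhenskaya's inequality $\|v\|_{L^4}^2\le C\|v\|_{L^2}\|\nabla v\|_{L^2}$ and the $L^p$-stability \eqref{lp_estimate_RTFEM} of $P_h^{\mathrm{RT}}$, a Gronwall-type argument then yields the $s=1$ bound $\|\nabla u_h(t)\|_{L^2}\le Ct^{-1/2}\|u_0\|_{L^2}$.

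For $m\ge 1$, I would proceed by induction. Differentiating \eqref{FEM-num-solution1} in time and expressing $\partial_t^m u_h(t)$ through a Duhamel formula centred at $t/2$,
$$\partial_t^m u_h(t)=e^{(t/2)A_h}\partial_t^m u_h(t/2)-\int_{t/2}^t e^{(t-s)A_h}P_{X_h}\partial_t^m\bigl(P_h^{\mathrm{RT}}u_h\cdot\nabla u_h\bigr)(s)\,ds,$$
reduces the problem to two ingredients. First, $\|\partial_t^m u_h(t/2)\|_{L^2}$ is bounded inductively by using the equation to replace each time derivative by $A_h u_h$ or a product of lower-order time derivatives, each substitution producing a factor $s^{-1}$ from the previously established estimates. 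Second, the nonlinear integrand is a finite sum of terms $(P_h^{\mathrm{RT}}\partial_t^j u_h)\cdot\nabla\partial_t^{m-j}u_h$ whose $L^2$-norms, controlled by the inductive hypothesis together with \eqref{lp_estimate_RTFEM} and Lemma \ref{discreteinter}, carry integrable time singularities against the $L^2$-to-$H^s$ smoothing factor for $s\in[0,1]$, and summation yields the asserted bound $\|\partial_t^m u_h(t)\|_{H^s}\le Ct^{-s/2-m}$.

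The main technical obstacle is ensuring that every constant appearing in the resolvent and semigroup estimates, as well as in the $L^p$ bounds on $P_h^{\mathrm{RT}}u_h$, is independent of $h$: the former is handled by \eqref{L2-varphi-varphi_h}--\eqref{H1-varphi-varphi_h} (which transfer continuous elliptic regularity to $A_h$ through the Stokes--Ritz projection), while the latter is exactly what Lemma \ref{discreteinter} and \eqref{lp_estimate_RTFEM} provide. Once this uniformity is secured, the remainder of the argument is a direct transcription of the continuous proof of Lemma \ref{regularity-lemma-u}, with the restriction $s\le 1$ imposed simply because $X_h\not\subset H^2(\Omega)^2$ and an inverse inequality would introduce $h$-dependence.
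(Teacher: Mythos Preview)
Your proposal is correct and follows the same approach as the paper, which simply states that ``the proof is similar to that of Lemma \ref{regularity-lemma-u}'' (itself imported from \cite[Lemma 3.2]{2DNS2022Li}). You have faithfully unpacked what that similarity entails at the discrete level: the energy identity via the cancellation $(P_h^{\mathrm{RT}}u_h\cdot\nabla u_h,u_h)=0$, the $h$-uniform analytic semigroup bounds for $e^{tA_h}$, and the inductive Duhamel argument for higher time derivatives, with the restriction $s\le 1$ correctly attributed to the lack of $H^2$-conformity of $X_h$.
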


 According to \cite[Eq. (3.5)]{2DNS2022Li}, the projection operator $ P_{X_h} $ is $ H_0^{1}$ stable. By using a duality argument, we can derive that $ P_{X_h} $ is $ H^{-1} $ stable.
 The following corollary present some \text{a priori} estimates for the  semi-discrete solution $u_h$ in negative norms.
\begin{corollary} \label{11regularity-numerical-u}
This is the extension of Lemma \ref{regularity-numerical-u}. The semi-discrete numerical solution $ u_{h} $ is a function of $ L^{2}(0,T;\dot H_{0}^{1}(\Omega)) $ satisfying that
    \begin{align}
        \|\partial_t^mu_{h}(\cdot,t)\|_{H^{-s}(\Omega)} \le C t^{-m+\frac{s}{2}}\;\text{for}\ \ 0\le s \le 1,\ \ m=1,2,\ldots \label{uuh7}
    \end{align}
\end{corollary}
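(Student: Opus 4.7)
The plan is to establish the endpoint estimate $\|\partial_t^m u_h(\cdot,t)\|_{H^{-1}(\Omega)}\le Ct^{-m+1/2}$ and combine it with the $L^2$ bound $\|\partial_t^m u_h(\cdot,t)\|_{L^2(\Omega)}\le Ct^{-m}$ from Lemma \ref{regularity-numerical-u} by complex interpolation, which yields the stated estimate for every $s\in[0,1]$. To prove the $H^{-1}$ bound I apply $\partial_t^{m-1}$ to the semi-discrete equation \eqref{FEM-num-solution1}, giving
$$\partial_t^m u_h \;=\; A_h\,\partial_t^{m-1}u_h \;-\; P_{X_h}\,\partial_t^{m-1}\bigl(P_h^{\text{RT}} u_h\cdot\nabla u_h\bigr),$$
and evaluate the duality pairing against an arbitrary test function $\varphi$ of unit $H^1$ norm.

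For the viscous part I use that $A_h\partial_t^{m-1}u_h\in X_h$, so that $(A_h\partial_t^{m-1}u_h,\varphi)=(A_h\partial_t^{m-1}u_h,P_{X_h}\varphi)=-(\nabla\partial_t^{m-1}u_h,\nabla P_{X_h}\varphi)$; the $H^1$-stability of $P_{X_h}$ recalled just before the corollary together with Lemma \ref{regularity-numerical-u} yields the bound $Ct^{-(m-1)-1/2}=Ct^{-m+1/2}$. For the convective part I expand by the Leibniz rule,
$$\partial_t^{m-1}\bigl(P_h^{\text{RT}} u_h\cdot\nabla u_h\bigr) \;=\; \sum_{j=0}^{m-1}\binom{m-1}{j}\bigl(P_h^{\text{RT}}\partial_t^j u_h\bigr)\cdot\nabla\partial_t^{m-1-j}u_h,$$
and exploit the identity $(P_h^{\text{RT}}\partial_t^j u_h)\cdot\nabla\partial_t^{m-1-j}u_h=\nabla\cdot\bigl(P_h^{\text{RT}}\partial_t^j u_h\otimes\partial_t^{m-1-j}u_h\bigr)$, which is valid pointwise because $P_h^{\text{RT}}\partial_t^j u_h\in\text{RT}_0^1(\mathcal T_h)$ is exactly divergence-free with vanishing normal trace on $\partial\Omega$. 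An integration by parts against $P_{X_h}\varphi$, combined with the $H^{-1}$-boundedness of $P_{X_h}$, reduces the task to an $L^2$-estimate of the tensor product, which I control via the two-dimensional Gagliardo--Nirenberg inequality $\|v\|_{L^4}\le C\|v\|_{L^2}^{1/2}\|v\|_{H^1}^{1/2}$, the $L^p$-stability \eqref{lp_estimate_RTFEM}, and Lemma \ref{regularity-numerical-u}. This produces a bound of order $Ct^{-m/2}$ for the convective contribution, which is dominated by $Ct^{-m+1/2}$ for every $m\ge 1$.

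The main obstacle lies in the interplay between the two projections at the level of negative norms: the duality for the viscous term depends on the $H^1$-stability of $P_{X_h}$, which is nontrivial and rests on the Stokes-Ritz argument of the preceding Remark, whereas the $H^{-1}$-estimate of the convective term requires the exact pointwise divergence-free and zero-normal-trace properties of $P_h^{\text{RT}}\partial_t^j u_h$ in order to move one derivative onto the test function. Neither projection on its own would suffice; it is precisely the scheme's placement of $P_h^{\text{RT}}$ inside the convective term and $P_{X_h}$ outside (together with the fact that both operators commute with $\partial_t$) that allows the induction on $m$ to close and produces the claimed gain of $t^{s/2}$ when the target norm is weakened from $L^2$ to $H^{-s}$.
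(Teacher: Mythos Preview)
Your approach is essentially the same as the paper's: prove the $H^{-1}$ endpoint by differentiating \eqref{FEM-num-solution1} $m-1$ times, bounding the viscous contribution by $\|\partial_t^{m-1}u_h\|_{H^1}\le Ct^{-m+1/2}$ via the $H^1$-stability of $P_{X_h}$, treating the convective contribution through the Leibniz expansion and the tensor-product identity enabled by the exact divergence-free property of $P_h^{\text{RT}}$, and then interpolating with the $L^2$ estimate from Lemma \ref{regularity-numerical-u}. One small slip: the $L^4\times L^4$ estimate on each Leibniz summand actually gives
\[
\|P_h^{\text{RT}}\partial_t^j u_h\|_{L^4}\,\|\partial_t^{m-1-j}u_h\|_{L^4}\le C\,t^{-j-1/4}\,t^{-(m-1-j)-1/4}=Ct^{-m+1/2},
\]
not $Ct^{-m/2}$; however this is exactly the bound you need, so the conclusion is unaffected.
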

\begin{proof}
	By the equation \eqref{FEM-num-solution1}, the $ H^{-1} $ stability of $ P_{X_h} $, and the inequality \eqref{lp_estimate_RTFEM}, we have
\begin{align*}
\| \partial_{t}u_{h}(\cdot,t) \|_{H^{-1}(\Omega)} \le & C\| u_{h}(\cdot,t) \|_{H^{1}(\Omega)} + C\| [P_{h}^{\text{RT}}u_{h}\cdot \nabla u_{h} ](\cdot,t)\|_{H^{-1}(\Omega)} \\
 \le & C \| u_{h}(\cdot,t) \|_{H^{1}(\Omega)}+ C \| [P_{h}^{\text{RT}}u_{h}\otimes u_{h}](\cdot,t) \|_{L^{2}(\Omega)}\\
 \le & C \| u_h (\cdot,t) \|_{H^{1}(\Omega)} + C \| P_{h}^{\text{RT}}u_h (\cdot,t) \|_{L^{4}(\Omega)} \| u_h (\cdot,t) \|_{L^{4}(\Omega)}\\
 \le & C \| u_{h}(\cdot,t) \|_{H^{1}(\Omega)} + C \| u_{h}(\cdot,t) \|_{H^{1}(\Omega)}\| u_{h}(\cdot,t) \|_{L^{2}(\Omega)} \le C t^{-\frac{1}{2}}.
\end{align*}
We denote $u_h^{(m-1)}=\partial_t^{m-1} u_h$, $m\ge2$, and differentiate \eqref{FEM-num-solution1} $m-1$ times, we obtain
\begin{align*}
	\partial_t u_h^{(m-1)}- A_h u_h^{(m-1)} = - P_{X_h} \sum_{j=0}^{m-1}\binom{m-1}{j}(P_{h}^{\text{RT}} u_{h}^{(j)} \cdot \nabla u_{h}^{(m-1-j)}).
\end{align*}
Similar to the above process, we derive that
\begin{align*}
\|\partial_t u^{(m-1)}_h\|_{H^{-1}(\Omega)}\le C t^{-m+\frac{1}{2}}.
\end{align*}
Using the interpolation inequality, \eqref{uuh7} is verified.
\end{proof} 

The next lemma provides error bounds between $e^{tA} P_{X}$ and $e^{tA_{h}} P_{X_h}$.
\begin{lemma}\cite[Lemma 4.5]{li2022optimal}\label{err-operator}
	The error between exact operator $ e^{tA} P_{X} $ and $ e^{tA_{h}} P_{X_h} $ is presented as follows 
    \begin{align}
	    \|e^{tA} P_{X} - e^{tA_{h}} P_{X_h}\|_{L^{2}\to  L^{2}} & \le C t^{-1}h^{2},\label{err-operator-1}\\
	    \|e^{tA} P_{X} - e^{tA_{h}} P_{X_h}\|_{ L^{2}\to  L^{2}} & \le C t^{-\frac{1}{2} }h,\label{err-operator-2}\\
     \|e^{tA} P_{X} - e^{tA_{h}} P_{X_h}\|_{ H^{-1}\to  L^{2}} &  \le C t^{-\frac{1}{2}} .\label{err-operator-4}
    \end{align}
\end{lemma}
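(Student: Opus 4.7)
The plan is to prove the three bounds via the contour integral representation of the semigroups combined with resolvent error estimates for the Stokes problem. From \eqref{exponential-operator} we have
\begin{equation*}
e^{tA}P_X - e^{tA_h}P_{X_h} = \int_{\Gamma_{\delta,\kappa}} e^{zt}\bigl[(z-A)^{-1}P_X - (z-A_h)^{-1}P_{X_h}\bigr]\, dz,
\end{equation*}
with $\kappa=1/t$. On the arc $|z|=1/t$ we have $|dz|\sim 1/t$ and $|e^{zt}|\lesssim 1$, while on the rays $\rho e^{\pm i\delta}$ we have $|e^{zt}|\le e^{-c\rho t}$, so $\int_{\Gamma_{\delta,\kappa}}|e^{zt}|\,|z|^{-a}|dz|\lesssim t^{a-1}$ for $a\ge 0$. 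Hence each bound in the lemma is reduced to a corresponding resolvent error estimate.

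The core step is to prove the uniform resolvent bound
\begin{equation*}
\bigl\|(z-A)^{-1}P_X f - (z-A_h)^{-1}P_{X_h} f\bigr\|_{L^2(\Omega)} \le C h^2 \|f\|_{L^2(\Omega)} \qquad \forall\, z\in\Gamma_{\delta,\kappa}.
\end{equation*}
Setting $w=(z-A)^{-1}P_X f$ and $w_h=(z-A_h)^{-1}P_{X_h} f$, $w$ solves the Stokes resolvent problem $zw-\Delta w+\nabla q = P_X f$, $\nabla\cdot w=0$, $w|_{\partial\Omega}=0$. The estimate $\|w\|_{L^2}\le C|z|^{-1}\|f\|_{L^2}$ from \eqref{resolvent-l2-l2}, together with elliptic regularity applied to $\Delta w = \nabla q + zw - P_X f$ and the discrete inf-sup, yields $\|w\|_{H^2}+\|q\|_{H^1}\le C\|f\|_{L^2}$ uniformly in $z$. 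Let $(\tilde w_h,\tilde q_h)$ be the Stokes--Ritz projection of $(w,q)$; the standard $L^2$ estimate gives $\|w-\tilde w_h\|_{L^2}\le Ch^2\|f\|_{L^2}$. Subtracting the Ritz equation from the mixed discrete resolvent equation, $e_h:=w_h-\tilde w_h\in X_h$ satisfies $(z-A_h)e_h = z\,P_{X_h}(w-\tilde w_h)$ in $X_h$, so by \eqref{resolvent-l2-l2} applied to $A_h$,
\begin{equation*}
\|e_h\|_{L^2}\le C|z|^{-1}\|z(w-\tilde w_h)\|_{L^2}\le Ch^2\|f\|_{L^2},
\end{equation*}
and a triangle inequality gives the claimed $O(h^2)$ bound. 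Integrating against $|e^{zt}|$ yields \eqref{err-operator-1}.

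For \eqref{err-operator-2}, I would interpolate the $O(h^2)$ bound with the trivial resolvent bound $\|w-w_h\|_{L^2}\le\|w\|_{L^2}+\|w_h\|_{L^2}\le C|z|^{-1}\|f\|_{L^2}$ (from the $L^2\to L^2$ resolvent estimates of $A$ and $A_h$), giving $\|w-w_h\|_{L^2}\le Ch|z|^{-1/2}\|f\|_{L^2}$, then integrate to obtain the $t^{-1/2}h$ factor. For \eqref{err-operator-4}, the simplest route is just the triangle inequality and \eqref{A-stability-2} with $s=1$ applied to both $e^{tA}P_X$ and (its discrete analogue for) $e^{tA_h}P_{X_h}$, which gives the $t^{-1/2}$ bound without any factor of $h$; the discrete analogue follows by the same resolvent calculus argument applied to $A_h$ together with the $H^{-1}$-stability of $P_{X_h}$ noted before Corollary~\ref{11regularity-numerical-u}.

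The main obstacle is the first resolvent estimate, because the perturbation term $z\,P_{X_h}(w-\tilde w_h)$ in the equation for $e_h$ is large for large $|z|$; the key cancellation is the factor $|z|^{-1}$ supplied by the discrete resolvent bound on $(z-A_h)^{-1}$, and a clean handling requires the Ritz projection to enjoy $O(h^2)$ convergence with a constant independent of $z$, which in turn hinges on the uniform estimate $\|w\|_{H^2}+\|q\|_{H^1}\le C\|f\|_{L^2}$. Once this is in place, the contour integration and the interpolation step that yields the intermediate order $h|z|^{-1/2}$ are routine.
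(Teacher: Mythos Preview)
The paper does not prove this lemma; it is quoted from \cite[Lemma~4.5]{li2022optimal} and stated without argument. Your sketch via the contour representation \eqref{exponential-operator} together with a uniform resolvent error estimate is precisely the standard route used in that reference, and the reductions you give for \eqref{err-operator-2} (interpolating the $O(h^2)$ bound with the trivial $O(|z|^{-1})$ bound) and for \eqref{err-operator-4} (triangle inequality plus the $H^{-1}\to L^2$ smoothing of both semigroups) are correct.

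One small technical correction in your core step. You write the continuous resolvent problem as $zw-\Delta w+\nabla q = P_X f$ and then take the Stokes--Ritz projection of $(w,q)$. If you compute carefully, subtracting this Ritz equation from the discrete resolvent equation $(z-A_h)w_h=P_{X_h}f$ leaves a residual term $P_{X_h}(f-P_Xf)=P_{X_h}\nabla\eta$ in the equation for $e_h$, because $X_h$ is only \emph{discretely} divergence-free and $P_{X_h}P_X\neq P_{X_h}$ in general. The clean way to make your identity $(z-A_h)e_h=z\,P_{X_h}(w-\tilde w_h)$ hold exactly is to take the Stokes--Ritz projection of $(w,\,q+\eta)$, i.e.\ use the pressure of the mixed formulation with right-hand side $f$ rather than $P_Xf$; then $-\Delta w+\nabla(q+\eta)=f-zw$, the extra term disappears, and the needed regularity $\|w\|_{H^2}+\|q+\eta\|_{H^1}\le C\|f\|_{L^2}$ still holds uniformly in $z$ by Stokes regularity together with $\|w\|_{L^2}\le C|z|^{-1}\|f\|_{L^2}$ and $\|\eta\|_{H^1}\le C\|f\|_{L^2}$. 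With this adjustment your argument goes through as written.
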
 

Next, we present an optimal error estimate for the semi-discrete scheme \eqref{FEM-num-solution1}. Here we only consider the short-time error estimate, i.e., $T\le T_0$ with $T_0$ sufficiently small. This case is more tricky since the  $H^2$ norm of the solution exhibits singularity near $t=0$. For large time estimate with $t >T_0$, the standard argument for  the case that $u_0\in \dot{H}_0^1(\Omega)\cap [H^2(\Omega)]^2$ works directly.

\begin{theorem}
\label{thsemi}
Suppose $ u $ is the mild solution of \eqref{system-1} defined by \eqref{mild-solution}, $ u_{h} $ is the numerical solution defined by \eqref{FEM-num-solution}.
Then the error $ e (t) := u (t) - u_{h} (t) $ satisfies
    \begin{align}\label{error-h-ep-log}
	\|e (t)\|_{L^{2}(\Omega)} \le C t^{-1+\varepsilon} h^{2-2\varepsilon}\quad \forall ~~ t\in(0,T]
    \end{align}
for arbitrarily small $\varepsilon>0$ and sufficiently small $T$. 
\end{theorem}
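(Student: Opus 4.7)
The plan is to exploit the Duhamel representations \eqref{mild-solution} and \eqref{FEM-num-solution} and split the error into three pieces,
\begin{align*}
e(t) = E_1(t) + E_2(t) + E_3(t),
\end{align*}
where $E_1(t) = [e^{tA} - e^{tA_h}P_{X_h}]u_0$ is the linear propagation error, $E_2(t) = -\int_0^t [e^{(t-s)A}P_X - e^{(t-s)A_h}P_{X_h}](u\cdot\nabla u)(s)\,ds$ captures the semigroup-discretization error in the nonlinear forcing, and $E_3(t) = \int_0^t e^{(t-s)A_h}P_{X_h}[(u\cdot\nabla u)-(P_h^{\text{RT}}u_h\cdot\nabla u_h)](s)\,ds$ is the error from discretizing the nonlinearity itself.

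For $E_1$, Lemma~\ref{err-operator} (specifically \eqref{err-operator-1}) immediately gives $\|E_1(t)\|_{L^2}\le Ct^{-1}h^2\|u_0\|_{L^2}$, which already matches the theorem with $\varepsilon=0$. For $E_2$, I would use the divergence-free structure $u\cdot\nabla u = \nabla\cdot(u\otimes u)$ and combine \eqref{err-operator-1} (an $L^2\to L^2$ bound of $Ct^{-1}h^2$) with \eqref{err-operator-4} (an $H^{-1}\to L^2$ bound of $Ct^{-1/2}$) via complex interpolation to obtain
\begin{align*}
\|e^{tA}P_X - e^{tA_h}P_{X_h}\|_{H^{-\theta}\to L^2}\le Ct^{-1+\theta/2}h^{2(1-\theta)}\quad\text{for all }\theta\in[0,1].
\end{align*}
Choosing $\theta = \varepsilon$ and bounding $\|u(s)\cdot\nabla u(s)\|_{H^{-\varepsilon}}$ via the divergence form together with Lemma~\ref{regularity-lemma-u} and 2D product/Sobolev estimates produces an integrable singular kernel in $s$, yielding $\|E_2(t)\|_{L^2}\le C_\varepsilon t^{-1+\varepsilon}h^{2-2\varepsilon}$. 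For $E_3$, I would decompose
\begin{align*}
(u\cdot\nabla u) - (P_h^{\text{RT}}u_h\cdot\nabla u_h) = (u - P_h^{\text{RT}}u)\cdot\nabla u + P_h^{\text{RT}}u_h\cdot\nabla e + P_h^{\text{RT}}e\cdot\nabla u,
\end{align*}
treat the first (consistency) term using \eqref{projectionRT_error} interpolated between $l=1$ and $l=2$ to give a factor $Ch^{2-2\varepsilon}\|u(s)\|_{H^{2-2\varepsilon}}\le Ch^{2-2\varepsilon}s^{-1+\varepsilon}$, and handle the remaining two terms using the smoothing estimate \eqref{A-stability-3} (which transfers to $e^{(t-s)A_h}P_{X_h}$ in view of Lemma~\ref{err-operator}) together with \eqref{lp_estimate_RTFEM} and the regularity bounds of Lemmas~\ref{regularity-numerical-u} and~\ref{11regularity-numerical-u}.

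To close the argument I would set $\phi(t):=t^{1-\varepsilon}\|e(t)\|_{L^2}$ and, assembling the three bounds, derive an inequality of the form
\begin{align*}
\phi(t) \le C_\varepsilon h^{2-2\varepsilon} + C\int_0^t (t-s)^{-\alpha}s^{-\beta}\phi(s)\,ds,
\end{align*}
with $\alpha+\beta<1$, then apply a generalized (weakly singular) Gronwall inequality, using the short time interval $T$ to absorb the prefactor of the integral. The main obstacle is the combined singularity of the exact solution ($\|u(s)\|_{H^{s_0}}\le Cs^{-s_0/2}$) and of the smoothing semigroup difference (up to $(t-s)^{-1}$), which makes naive $L^2$-based estimates non-integrable in $s$. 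Recovering integrability requires distributing regularity between the two factors via the divergence form of the convection, complex interpolation of the semigroup-error bounds, and a careful choice of $H^{-\theta}$ or $W^{-1,r}$ framework; this balancing is exactly what forces the arbitrarily small but nonzero $\varepsilon$ loss in both $h$ and $t$, and one cannot push $\varepsilon\to 0$ without making one of the integrals diverge.
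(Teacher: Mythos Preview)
Your overall strategy---Duhamel splitting into a linear piece, a semigroup-error piece, and a nonlinearity-difference piece, followed by a weighted absorption argument on $\phi(t)=t^{1-\varepsilon}\|e(t)\|_{L^2}$ using smallness of $T$---is exactly the paper's, and your estimate of $E_1$ and the closing argument are essentially identical to what the paper does (the paper phrases the absorption via smallness of $\|u\|_{L^2(0,T;H^1)}^{\varepsilon/2}+\|u_h\|_{L^2(0,T;H^1)}^{\varepsilon/2}$ rather than $T^{\varepsilon/4}$, but this is cosmetic).

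Where you differ is in the pairing: the paper applies the \emph{exact} semigroup $e^{(t-s)A}P_X$ to the nonlinearity difference and the semigroup error $e^{(t-s)A}P_X-e^{(t-s)A_h}P_{X_h}$ to the \emph{discrete} nonlinearity $P_h^{\text{RT}}u_h\cdot\nabla u_h$, i.e.\ your $E_2$ and $E_3$ are swapped. Both splittings are algebraically valid, but the trade-off matters. With your swap, bounding $E_2$ is actually easier (the exact nonlinearity has clean regularity bounds from Lemma~\ref{regularity-lemma-u}), whereas the paper must work to control $\|P_h^{\text{RT}}u_h\cdot\nabla u_h\|_{L^2}$, which is why it needs the discrete $L^\infty$ estimate \eqref{linf_estimate_shalf} and Lemma~\ref{discreteinter}.

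The price you pay is in $E_3$: you need the $W^{-1,r}\to L^2$ smoothing estimate \eqref{A-stability-3} for the \emph{discrete} semigroup $e^{tA_h}P_{X_h}$, and your claimed justification ``in view of Lemma~\ref{err-operator}'' does not work. Lemma~\ref{err-operator} only controls the semigroup difference from $L^2$ and $H^{-1}$ sources; writing $e^{tA_h}P_{X_h}=e^{tA}P_X+[\text{difference}]$ and applying the triangle inequality fails for $W^{-1,r}$ with $r$ close to $1$, since $W^{-1,r}\hookrightarrow H^{-2/r}$ with $2/r$ close to $2$, which is strictly rougher than the $H^{-1}$ bound available in \eqref{err-operator-4}. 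The discrete analogue of \eqref{A-stability-3} is true and can be proved by mimicking the resolvent argument of Lemma~\ref{A-stability} for $A_h$, but it does not follow from the lemmas you cite. The paper's pairing avoids this issue entirely because \eqref{A-stability-3} is only ever invoked for the continuous semigroup.
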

\begin{proof}
By using the equations \eqref{mild-solution} and \eqref{FEM-num-solution}, the error $ \| e (t) \|_{L^{2}(\Omega)} $ can be decomposed as:
		\begin{align*}
			\| e (t) \|_{L^{2}(\Omega)} \le & \Big\| (e^{t A}P_X - e^{t A_h}P_{X_h} )u_0 \Big\|_{L^{2}(\Omega)}\\
											& + \Big\| \int_{0}^t e^{(t-s)A} P_X \Big[u (s)\cdot \nabla u (s) - P_h^{\text{RT}} u_h (s) \cdot \nabla u_h (s)\Big]\d s \Big\|_{L^{2}(\Omega)}\\
											& + \Big\| \int_{0}^{t} \Big[e^{(t-s)A}P_{X} - e^{(t-s)A_h}P_{X_h}\Big] (P_h^{\text{RT}} u_h (s)\cdot \nabla u_h (s))\d s \Big\|_{L^{2}(\Omega)} \\
			=: & \mathcal{E}_1 (t) + \mathcal{E}_2 (t) + \mathcal{E}_3 (t).
		\end{align*}
		The error $ \mathcal{E}_1 (t) $ follows from \eqref{err-operator-1} and the $ L^{2} $ stability of $ e^{tA} $ and $ e^{t A_h} $ such that
		\begin{align}\label{estimate_E1_t}
		    \mathcal{E}_1 (t) \le C t^{-1 + \varepsilon} h^{2-2\varepsilon} \| u_0 \|_{L^{2}(\Omega)}.
		\end{align}

For the estimate of $ \mathcal{E}_2 (t) $, since $ u $ and $ P_h^{\text{RT}}u_h $ are both divergence free, by using \eqref{A-stability-3} and choosing $ r = 1/(1-\frac{\varepsilon}{2}) $, we have
		\begin{align*}
			\mathcal{E}_2 (t) \le & C \int_{0}^{t} (t-s)^{-1 + \frac{\varepsilon}{2}} \| u (s) \otimes u (s) - P_h^{\text{RT}}u_h (s)\otimes u_h (s) \|_{L^{1/(1-\varepsilon/2)} (\Omega)} \d s \\
			\le & C \int_{0}^{t} (t-s)^{-1+\frac{\varepsilon}{2}}\| P_h^{\text{RT}}e (s) \otimes u (s) + P_h^{\text{RT}} u_h (s) \otimes e (s) \|_{L^{1/(1-\varepsilon/2)} (\Omega)}\d s \\
				& +C \int_0^t (t-s)^{-1+\frac{\varepsilon}{2}} \| (u (s) - P_h^{\text{RT}} u (s))\otimes u (s) \|_{L^{1/(1-\varepsilon/2)} (\Omega)} \d s \\
			\le & C \int_{0}^{t} (t-s)^{-1+\frac{\varepsilon}{2}}\Big( \| P_h^{\text{RT}} e (t) \|_{L^{2}(\Omega)} \| u (s) \|_{L^{\frac{2}{1-\varepsilon}}(\Omega)} + \| e (s) \|_{L^{2}(\Omega)} \| P_h^{\text{RT}} u_h (s) \|_{L^{\frac{2}{1-\varepsilon}}(\Omega)}\Big)\d s\\
				& + C \int_{0}^t (t-s)^{-1+\frac{\varepsilon}{2}}\| u (s) - P_h^{\text{RT}}u (s) \|_{L^{2}(\Omega)} \| u (s) \|_{L^{\frac{2}{1-\varepsilon}}(\Omega)} \d s
		\end{align*}
		 By using Lemma \ref{regularity-lemma-u}, Lemma \ref{regularity-numerical-u}, the error estimate \eqref{projectionRT_error}, the $ L^{2}(\Omega) $ stability of $ P_h^{\text{RT}} $, the estimate \eqref{lp_estimate_RTFEM} for $ p = 2/(1-\varepsilon) $, and the interpolation inequality, we have
		\begin{align}
			\mathcal{E}_2 (t) \le & C \int_{0}^{t} (t-s)^{-1+\frac{\varepsilon}{2}}s^{-\frac{\varepsilon}{4}} \Big(\| u (s) \|_{H^{1}(\Omega)}^\frac{\varepsilon}{2} + \| u_h (s) \|_{H^{1}(\Omega)}^{\frac{\varepsilon}{2}} \Big)\| e (t) \|_{L^{2}(\Omega)}\d s \notag\\
			& + C h^{2-2\varepsilon} \int_{0}^{t} (t-s)^{-1+\frac{\varepsilon}{2}} \| u (s) \|_{H^{2}(\Omega)}^{1-\varepsilon} \| u (s) \|_{H^{1}(\Omega)}^{\varepsilon} \d s \notag\\
			\le & C t^{-1+ \varepsilon} h^{2-2\varepsilon} + C \int_{0}^{t} (t-s)^{-1+\frac{\varepsilon}{2}}s^{-\frac{\varepsilon}{4}} \Big(\| u (s) \|_{H^{1}(\Omega)}^\frac{\varepsilon}{2} + \| u_h (s) \|_{H^{1}(\Omega)}^{\frac{\varepsilon}{2}} \Big)\| e (t) \|_{L^{2}(\Omega)}\d s.\label{estimate_E2_t}
		\end{align}

		For the estimate of $ \mathcal{E}_3 (t) $, by using Lemma \ref{err-operator}, we have
		\begin{align}
			\mathcal{E}_3 (t) \le & C h^{2-2\varepsilon} \int_{0}^t (t-s)^{-1+\frac{\varepsilon}{2}} \| P_h^{\text{RT}} u_h (s) \cdot \nabla u_h (s) \|_{L^{2}(\Omega)}^{1-\varepsilon} \| P_h^{\text{RT}} u_h (s) \cdot \nabla u_h (s) \|_{H^{-1}(\Omega)}^{\varepsilon} \d s \notag \\
			\le & C h^{2-2\varepsilon} \int_{0}^{t} (t-s)^{-1+\frac{\varepsilon}{2}} \| P_{h}^{\text{RT}}u_h (s) \|_{L^{\infty}(\Omega)}^{1-\varepsilon} \| \nabla u_h (s) \|_{L^{2}(\Omega)}^{1-\varepsilon} \| P_{h}^{\text{RT}} u_h (s) \otimes u_h (s) \|_{L^{2}(\Omega)}^{\varepsilon}\d s \notag\\
			\le & C h^{2-2\varepsilon} \int_{0}^t (t-s)^{-1+\frac{\varepsilon}{2}} \| P_h^{\text{RT}} u_h (s) \|_{L^{\infty}(\Omega)} \| \nabla u_h (s) \|_{L^{2}(\Omega)}^{1-\varepsilon} \| u_h (s) \|_{L^{2}(\Omega)}^{\varepsilon} \d s \notag \\
			\le & C h^{2-2\varepsilon} \int_{0}^{t} (t-s)^{-1+\frac{\varepsilon}{2}} \Big(\| u_h (s) \|_{L^{\infty}(\Omega)} + \| u_h (s) \|_{H^{1}(\Omega)}\Big) \| \nabla u_h (s) \|_{L^{2}(\Omega)}^{1-\varepsilon}\d s, \label{estimate_E3_t}
		\end{align}
		where the last inequality follows from \eqref{lp_estimate_RTFEM}. By using Lemma \ref{discreteinter}, we have
		\begin{align}\label{linf_estimate_uhs}
		    \| u_h (s) \|_{L^{\infty}(\Omega)} \le C \| u_h (s) \|_{L^{2}(\Omega)}^{\frac{1}{2}} \| A_h u_h (s) \|_{L^{2}(\Omega)}^{\frac{1}{2}}.
		\end{align}
		From the equation \eqref{FEM-num-solution1}, we can estimate $ \| A_h u_h (s) \|_{L^{2}(\Omega)} $ as follows by using the $ L^{2} $ stability of $ P_{X_h} $, \eqref{lp_estimate_RTFEM} and Lemma \ref{regularity-numerical-u}
		\begin{align}
			\| A_h u_h (s) \|_{L^{2}(\Omega)} \le & \| \partial_t u_h (s) \|_{L^{2}(\Omega)} + \| P_{h}^{\text{RT}} u_h (s) \cdot \nabla u_h (s) \|_{L^{2}(\Omega)} \notag\\
			\le & C s^{-1} + C \| P_h^{\text{RT}} u_h (s) \|_{L^{\infty}(\Omega)} \| \nabla u_h (s) \|_{L^{2}(\Omega)} \notag\\
			\le & C s^{-1} + C \Big( \| u_h (s) \|_{L^{\infty} (\Omega)} + \| u_h \|_{H^{1}(\Omega)} \Big) \| \nabla u_h (s) \|_{L^{2}(\Omega)} \notag\\
			\le & C s^{-1} + C s^{-\frac{1}{2}} \| u_h (s) \|_{L^{\infty} (\Omega)}. \label{l2estimate_Ahuh}
		\end{align}
		Substituting \eqref{l2estimate_Ahuh} into \eqref{linf_estimate_uhs} and using Young's inequality, we obtain
		\begin{align}\label{linf_estimate_shalf}
		    \| u_h (s) \|_{L^{\infty} (\Omega)} \le C s^{-\frac{1}{2}}.
		\end{align}
		Substituting \eqref{linf_estimate_shalf} into \eqref{estimate_E3_t} and using Lemma \ref{regularity-numerical-u}, we have
		\begin{align}\label{estimate_E3_finial}
		    \mathcal{E}_3 (t) \le C h^{2-2\varepsilon} \int_{0}^t (t-s)^{-1+\frac{\varepsilon}{2}} s^{-1+\frac{\varepsilon}{2}}\d s \le C t^{-1+\varepsilon} h^{2-2\varepsilon}.
		\end{align}
		Combining the estimates \eqref{estimate_E1_t}, \eqref{estimate_E2_t} and \eqref{estimate_E3_finial}, we obtain the estimate for $ e (t) $
	\begin{align*}
	   \|e (t)\|_{L^{2}(\Omega)}
	   \le & C \int_{0}^{t}(t-s)^{-1+\frac\varepsilon2} s^{-\frac\varepsilon4}\Big(\| u_h ( s ) \|_{H^{1}( \Omega )}^{\frac\varepsilon2}+\| u ( s ) \|_{H^{1}( \Omega )}^{\frac\varepsilon2}\Big)\|e (s)\|_{L^{2}(\Omega)}\d s \\
  & +C t^{-1+\varepsilon}h^{2-2\varepsilon}.
    \end{align*}
    Multiplying $t^{1-\varepsilon} $ on both sides derives that
    \begin{equation*}
    \begin{aligned}
&t^{1-\varepsilon} \|e (t)\|_{L^{2}(\Omega)}\\
\le &C t^{1-\varepsilon}\int_{0}^{t}(t-s)^{-1+\frac\varepsilon2} s^{-1+\frac{3\varepsilon}{4} } (\| u_h ( s ) \|_{H^{1}( \Omega )}^{\frac\varepsilon2}+\| u ( s ) \|_{H^{1}( \Omega )}^{\frac\varepsilon2}) s^{1-\varepsilon} \|e (s)\|_{L^{2}(\Omega)}\d s  + C h^{2-2\varepsilon}.
    \end{aligned}
    \end{equation*}
    By H\"older's inequality, we have
    \begin{align*}
	    \int_{0}^{t}(t-s)^{-1+\frac\varepsilon2} s^{-1+\frac{3\varepsilon}{4} }\|u (s)\|_{H^{1}(\Omega)}^{\frac\varepsilon2}\d s \le & \|u\|_{L^{2}(0,t;\dot H^{1}_{0}(\Omega))}^{\frac\varepsilon2}\Big(\int_{0}^{t}\Big[(t-s)^{-1+\frac\varepsilon2} s^{-1+\frac{3\varepsilon}{4} }\Big]^{\frac{4}{4-\varepsilon} } \,\d s\Big)^{\frac{4-\varepsilon}{4} }\\
	\le & C t^{-1+\varepsilon} \|u\|_{L^{2}(0,t;\dot H_{0}^{1}(\Omega))}^{\frac\varepsilon2}.
    \end{align*}
    Combining the above inequalities above, we have
    \begin{align*}
	    t^{1-\varepsilon} \|e (t)\|_{L^{2}(\Omega)} \le & C h^{2-2 \varepsilon} + C  \Big (\|u\|_{L^{2}(0,t;\dot H_{0}^{1}(\Omega))}^{\frac\varepsilon2}
	    +\|u_h\|_{L^{2}(0,t;\dot H_{0}^{1}(\Omega))}^{\frac\varepsilon2}\Big ) \sup_{0 < s \le t} s^{1- \varepsilon}\|e (s)\|_{L^{2}(\Omega)}.
    \end{align*}
    Taking the supremum with respect to $ t $ on both sides deduce that
    \begin{align*}
	    \sup_{0 < t \le T} t^{1-\varepsilon} \|e (t)\|_{L^{2}(\Omega)} \le & \; C h^{2-2\varepsilon} \\
	  + C  \Big ( \|u\|_{L^{2}(0,T;\dot H_{0}^{1}(\Omega))}^{\frac\varepsilon2}&  + \|u_{h}\|_{L^{2}(0,T;\dot H_{0}^{1}(\Omega))}^{\frac\varepsilon2} \Big ) \sup_{0 < t \le T} t^{1-\varepsilon}\|e (t)\|_{L^{2}(\Omega)}.
    \end{align*}
According to \cite[Lemma 3.5]{2DNS2022Li}, for any small $\sigma>0$, there exists $T_\sigma>0$ such that
\begin{equation*}
\|u\|_{L^{2}(0,T;\dot H_{0}^{1}(\Omega))} + \|u_{h}\|_{L^{2}(0,T;\dot H_{0}^{1}(\Omega))}\le \sigma \quad  \forall T\in(0,T_\sigma].
\end{equation*}
%
If $ T $ satisfies $ C \Big(\|u\|_{L^{2}(0,T;\dot H_{0}^{1}(\Omega))}^{\varepsilon} + \|u_{h}\|_{L^{2}(0,T;\dot H_{0}^{1}(\Omega))}^{\varepsilon}\Big) < 1 $, then
we have
    \begin{align*}
        \sup_{0 < t \le T} t^{1-\varepsilon}\|e (t)\|_{L^{2}(\Omega)} \le C h^{2-2\varepsilon},
    \end{align*}
   and complete the proof of theorem.
    \end{proof}

\section{Fully discretization}%
\label{sec:fully}
In this section, we propose and analyze a fully discrete scheme by using a second-order implicit-explicit Runge--Kutta method.

\subsection{Runge-Kutta method and error equations} \label{section_3_1}
Let $0=t_0<t_1<...<t_N=T$ be a partition of the time interval $[0,T]$ with stepsize
\begin{equation}
\label{time1}
\tau_1 = \tau^{\frac{1}{1-\alpha}}\ \textrm{and} \ \tau_n=t_n-t_{n-1}\sim (t_{n-1}/T)^\alpha\tau\ \textrm{for}\ 2\le n\le N,
\end{equation}
where $\tau$ is the maximal stepsize, and $"\thicksim"$ means equivalent magnitude (up to a constant multiple). The parameter $\alpha \in (0,1)$ determines how fast the temporal grids are refined towards $t = 0$. The stepsizes defined in this way have the following properties:
\begin{enumerate}
  \item $\tau_n\sim\tau_{n-1}$ for two consecutive stepsizes.
  \item For any fixed integer $ M_0 $, $\tau_1 \sim \tau_{2} \sim \cdots \sim \tau_{M_0} \sim \tau^{\frac{1}{1-\alpha}}$, the equivalence depends on $ M_0 $, but is independent on $ \tau $ and $ n $. Hence, the starting stepsize is much smaller than the maximal stepsize.
  This resolves the solution's singularity near $t = 0$.
  \item The total number of time levels is $O(T/\tau)$. Therefore, the total computational cost is equivalent to using a uniform stepsize $\tau$.
\end{enumerate}

Next, we introduce an implicit Runge--Kutta method with $q$ stages for the time discretization of the evolution equation \eqref{project-equation}.
The coefficients of the method are given by the Butcher tableau
\begin{equation*}
 \begin{array}{c|c}
  a_{11} \ \cdots \ a_{1q}  & c_1 \\
    \vdots \quad \quad\quad \ \vdots   & \vdots\\
      a_{q1} \ \cdots \ a_{qq}  & c_q\\
\hline
  b_{1} \ \cdots \ b_{q}   &  \\
	\end{array}
\end{equation*}
with $c_1,\ldots,c_q\in(0,1]$. Here the quadrature points $c_i, 1 \le i \le q$, are distinct numbers in $[0,1]$ and the coefficients $a_{ij}$ and $b_j$ are associated with the quadrature formulas
\begin{align}
\label{formula}
\int_{0}^{1}\varphi\d t \approx \sum_{j=1}^{q}b_{j}\varphi(c_j),\qquad
\int_{0}^{c_i}\varphi\d t \approx\sum_{j=1}^{q}a_{ij}\varphi(c_j),\
i = 1,\ldots,q.
\end{align}
We assume that \eqref{formula} are exact for polynomials of degree $p-1$ and $p-2$, respectively. It implies that the method is accurate of order $p$.
Now we introduce error functionals for the quadrature formulae \eqref{formula} for the interval $(t_n,t_{n+1})$ as
\begin{equation}\label{eqn:kutta}
\begin{aligned}
&Q_{n,i}(\varphi)=\int_{t_n}^{t_{n,i}}\varphi\d s -\tau_{n+1}\sum_{j=1}^q a_{ij}\varphi(t_{n,j}),\quad i=1,\cdots,q, \\
&Q_{n+1}(\varphi)=\int_{t_n}^{t_{n+1}}\varphi\d s -\tau_{n+1}\sum_{i=1}^q b_{l}\varphi(t_{n,i}).
\end{aligned}
\end{equation}
Recall the assumption that the quadrature formulae \eqref{formula} are exact for polynomials of degree $p-1$ and $p-2$, respectively
(this means that the time discretization scheme is strictly accurate of $p$). As a result, we have \cite{Michel1987}
\begin{equation}\label{eqn:kutta-err}
\begin{aligned}
&\|Q_{n,i}(\varphi)\| \le C\tau_{n+1}^{l+1}\underset{t_n< s < t_{n+1}}{\sup}\|\varphi^{(l)}(s)\|  \quad \textrm{for} \ l\le p-1,\ i = 1,2. \\
&\|Q_{n+1}(\varphi)\| \le C\tau_{n+1}^{l+1}\underset{t_n< s < t_{n+1}}{\sup}\|\varphi^{(l)}(s)\| \quad \textrm{for} \ l\le p.
\end{aligned}
\end{equation}
where $ \| \cdot \| $ can be $ L^{2}(\Omega) $ norm or $ H^{1}(\Omega) $ norm.

Taking $\mathcal{O}=(a_{ij})$, the vectors $b=(b_j)$ and $c=(c_i)^T$ for $i,j=1,\cdots,q$. Here we use the two-stage Lobatto IIIC  scheme, with $p=q=2$, namely
\[\mathcal{O}=
\begin{pmatrix}
\frac{1}{2}& -\frac{1}{2}\\
\frac{1}{2}& \frac{1}{2}
\end{pmatrix},\quad
b=\begin{pmatrix}
\frac{1}{2}& \frac{1}{2}
\end{pmatrix},
\quad
c=
\begin{pmatrix}
0\\
1
\end{pmatrix}.
\]
It is well-known that the method is implicit and algebraically stable \cite{Hairer2010}. In the numerical scheme, we linearize the nonlinear term in the Navier--Stokes equation. For a sequence of finite element functions $ \{v_{h}^{n,i}\} $ for $ n = 0,1,\cdots $ and $ i = 1,2 $, we define the extrapolation operator $ \hat{I}_{h} $ as follows:
\begin{equation}
\hat{I}_{h}v_{h}^{n,i}=
    \left\{\begin{array}{ll}
		    v_{h}^{0}, & n = 0,\\
		    v_{h}^{n} + c_{i}\frac{\tau_{n+1}}{\tau_{n}} (v_{h}^{n} - v_{h}^{n-1}), & n \ge 1.
    \end{array}\right.
\end{equation}
Then for a function $ f  $, we have the following error estimate for the extrapolation operator $ \hat{I}_{h} $ for $ n \ge 1 $:
\begin{align}\label{extrapolation_error}
    \| \hat{I}_{h}f (t_{n,i}) - f (t_{n,i}) \| \le C \tau_{n+1}^{2}\sup_{t_{n-1} < s < t_{n+1}} \| \partial_{t}^{2}f (\cdot,s) \| \quad \text{for}\quad i = 1,2,
\end{align}
where $ \| \cdot \| $ can be $ L^{2}(\Omega) $ norm or $ H^{1}(\Omega) $ norm.

For given numerical solutions $u_{h}^{n-1},u^n_h \in X_h$, we compute $u^{n+1}_h \in X_h$ by
\begin{subequations}\label{eqn:app}
\begin{align}
&u_h^{n,i}=u_h^n+\tau_{n+1}\sum_{j=1}^{2}a_{ij}\left[A_hu_h^{n,j}-P_{X_h} (P_{h}^{\text{RT}}\hat{I}_{h}u_h^{n,j}\cdot \nabla u_h^{n,j}) \right],\ i = 1,2, \label{eqn:app_a}\\
&u_h^{n+1}=u_h^n+\tau_{n+1}\sum_{i=1}^{2}b_{i}\left[A_hu_h^{n,i}-P_{X_h} (P_{h}^{\text{RT}}\hat{I}_{h}u_h^{n,i}\cdot \nabla u_h^{n,i}) \right], \label{eqn:app_b}
\end{align}
\end{subequations}
Here $u_h^{n,i}$ are approximations to $u_h(t_{n,i})$ for $i = 1,2$, with $t_{n,i}=t_n+c_i\tau_{n+1}$ being the internal Runge-Kutta nodes.

Recalling the truncation errors $ Q_{n,i}(\partial_{t} u_{h}) $ and $ Q_{n}(\partial_{t}u_{h}) $,
we write the the semi-discrete solution $u_{h}$  as
\begin{align}
u_h(t_{n,i})=&u_h(t_{n})+\tau_{n+1}\sum_{j=1}^{2}a_{ij}\left[A_hu_h(t_{n,j})-P_{X_h} (P_{h}^{\text{RT}}\hat{I}_{h}u_h(t_{n,j})\cdot \nabla u_h(t_{n,j})) \right]\notag\\
	     & + \tau_{n+1}\sum_{j=1}^{2}a_{ij}\left[ P_{X_{h}}(P_{h}^{\text{RT}}\hat{I}_{h}u_{h}(t_{n,j})\cdot \nabla u_{h}(t_{n,j})) - P_{X_{h}}(P_{h}^{\text{RT}}u_{h}(t_{n,j})\cdot \nabla u_{h}(t_{n,j})) \right]\notag\\
	     &+Q_{n,i}(\partial_tu_h),\quad i = 1,2,\label{kutta5} \\
u_h(t_{n+1})
=&u_h(t_n)+\tau_{n+1}\sum_{i=1}^{2}b_{i}\left[A_hu_h(t_{n,i})-P_{X_h} (P_{h}^{\text{RT}}\hat{I}_{h}u_h(t_{n,i})\cdot \nabla u_h(t_{n,i})) \right]\notag\\
 & + \tau_{n+1}\sum_{i=1}^{2}b_{i}\left[ P_{X_{h}}(P_{h}^{\text{RT}}\hat{I}_{h}u_{h}(t_{n,i})\cdot \nabla u_{h}(t_{n,i})) - P_{X_{h}}(P_{h}^{\text{RT}}u_{h}(t_{n,i})\cdot \nabla u_{h}(t_{n,i})) \right]\notag\\
 &+Q_{n+1}(\partial_tu_h). \label{kutta6}
\end{align}
Now we define
\begin{align*}
    \mathcal G^{n,i} &= P_{X_{h}}(P_{h}^{\text{RT}}\hat{I}_{h}u_{h}(t_{n,i})\cdot \nabla u_{h}(t_{n,i})) - P_{X_{h}}(P_{h}^{\text{RT}}u_{h}(t_{n,i})\cdot \nabla u_{h}(t_{n,i})),\\
     T^{n,i} &=  - P_{X_{h}}(P_{h}^{\text{RT}}\hat{I}_{h}u_{h}^{n,i}\cdot \nabla u_{h}^{n,i}) + P_{X_{h}}(P_{h}^{\text{RT}}\hat{I}_{h}u_{h}(t_{n,i})\cdot \nabla u_{h}(t_{n,i})).
\end{align*}
Then the errors $\eta^{n+1}=u_h^{n+1}-u_h(t_{n+1})$ and $\eta^{n,i}=u_h^{n,i}-u_h(t_{n,i})$ satisfy
\begin{equation}\label{eqn:error-eq}
\begin{aligned}
	& \dot \eta^{n,i} = A_{h} \eta^{n,i} + T^{n,i},\\
&\eta^{n,i}=\eta^{n} + \tau_{n+1}\sum_{j=1}^{2}a_{ij}\dot\eta^{n,j} - \tau_{n+1}\sum_{j=1}^{2}a_{ij}\mathcal G^{n,j} - Q_{n,i}(\partial_{t}u_{h})\quad i = 1,2,\\
&\eta^{n+1}=\eta^{n} + \tau_{n+1}\sum_{i=1}^{2}b_{i}\dot \eta^{n,i}- \tau_{n+1}\sum_{i=1}^{2}b_{i}\mathcal G^{n,i} - Q_{n+1}(\partial_{t}u_{h}).
\end{aligned}
\end{equation}

In order to estimate the extrapolation error $ \mathcal G^{n,i} $, we first derive an \text{a priori}
estimate for $ A_{h}u_{h}(t_{n,i}) $. In combination with \eqref{l2estimate_Ahuh} and \eqref{linf_estimate_shalf}, we have
\begin{equation}\label{discrete_H2_estimate_uh}
    \| A_{h}u_{h}(t_{n,i}) \|_{L^{2}(\Omega)} \le C t_{n,i}^{-1}.
\end{equation}

According to \eqref{extrapolation_error} and \eqref{discrete_H2_estimate_uh}, $ \mathcal G^{n,i} $ satisfies
\begin{equation}\label{G_estimate_L2}
\begin{aligned}
	    \| \mathcal G^{n,i} \|_{L^{2}(\Omega)} \le & C \| \hat{I}_{h}u_{h}(t_{n,i}) - u_{h}(t_{n,i}) \|_{L^{4}(\Omega)}\| \nabla u_{h}(t_{n,i}) \|_{L^{4}(\Omega)}\\
	    \le & C \| \hat{I}_{h}u_{h}(t_{n,i}) - u_{h}(t_{n,i}) \|_{L^{2}(\Omega)}^{1/2}\| \hat{I}_{h}u_{h}(t_{n,i}) - u_{h}(t_{n,i}) \|_{H^{1}(\Omega)}^{1/2}\\
		& \cdot \| u_{h}(t_{n,i}) \|_{H^{1}(\Omega)}^{1/2}\| A_{h}u_{h}(t_{n,i}) \|_{L^{2}(\Omega)}^{1/2}\\
	    \le & \tau_{n+1}^{2}t_{n+1}^{-3}.
\end{aligned}
\end{equation}
Similarly, we have the estimate in $H^{-1}$ norm
\begin{equation}\label{G_estimate_H_negative}
\begin{aligned}
\| \mathcal G^{n,i} \|_{H^{-1}(\Omega)} \le C \| P_h^{\text{RT}} (\hat{I}_{h}u_{h}(t_{n,i})- u_{h}(t_{n,i}))\otimes u_{h}(t_{n,i}) \|_{L^{2}(\Omega)}
	    \le C t_{n+1}^{-5/2}\tau_{n+1}^{2}.
\end{aligned}
\end{equation}


\subsection{Regularities of numerical solutions and estimates for operators}
In this subsection, we prove 
$ L^{2}(\Omega)^2 $ boundedness, $ L^{2}(0,T;H^{1}_{0}(\Omega)^2) $ boundedness and $ H^{1}(\Omega)^2 $ estimate of the fully discrete solution in \eqref{eqn:app} by using energy estimates.



The $ L^{2}(\Omega)^2 $ and $ L^{2} (0,T; H_0^1 (\Omega)^2) $ boundedness of the solution of the fully discrete scheme \eqref{eqn:app} is presented in the following lemma.
\begin{lemma} \label{leelle3.1} (Discrete energy decay for the NS equation)
Assume that $u_h^{n}\in X_{h}$ is given. 
Then, the solutions $u_h^{n,i}\in X_{h}$, $i = 1,2$ and $ u_{h}^{n+1} \in X_{h} $
of fully discrete scheme \eqref{eqn:app} satisfy the following estimate:
\begin{align}
	\|u_h^{n+1}\|_{L^2(\Omega)}^2 \le & \|u_h^n\|_{L^2(\Omega)}^2-2\tau_{n+1}\sum_{i=1}^{2} b_i\|\nabla u_h^{n,i}\|_{L^2(\Omega)}^2,\quad \text{for}\quad n \ge 0,\label{numer-solution-l2-estimate}\\
	\sum_{i=1}^{2}\| u_{h}^{n,i} \|_{L^{2}(\Omega)}^2 \le & C \| u_h^n \|_{L^{2}(\Omega)}^{2} + C \tau_{n+1}\sum_{i=1}^2 \| \nabla u_h^{n,i} \|_{L^{2}(\Omega)}^{2} \notag\\
													& + C \tau_{n+1}^2 \sum_{i=1}^{2} \| \hat{I}_h u_h^{n,i} \|_{L^{2}(\Omega)}^{2}\| \hat{I}_h u_{h}^{n,i} \|_{H^{1}(\Omega)}^{2} \| u_h^{n,i} \|_{H^{1}(\Omega)}^{2} ,\quad \text{for}\quad n \ge 0.\label{l2-numer-stability}
\end{align}
\end{lemma}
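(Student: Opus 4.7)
The plan is to prove both inequalities via the energy method adapted to the Runge--Kutta scheme, exploiting that the Raviart--Thomas projection $P_h^{\text{RT}}$ produces a pointwise divergence-free convective coefficient, so that the trilinear form $(P_h^{\text{RT}}\hat I_h u_h^{n,j}\cdot\nabla v,w)$ is skew-symmetric in $(v,w)$ whenever $v,w\in H_0^1(\Omega)^2$. Writing $k_i:=A_hu_h^{n,i}-P_{X_h}(P_h^{\text{RT}}\hat I_h u_h^{n,i}\cdot\nabla u_h^{n,i})$, the scheme reads $u_h^{n,i}=u_h^n+\tau_{n+1}\sum_j a_{ij}k_j$ and $u_h^{n+1}=u_h^n+\tau_{n+1}\sum_i b_ik_i$.

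For \eqref{numer-solution-l2-estimate}, I would expand $\|u_h^{n+1}\|_{L^2(\Omega)}^2$, substitute $u_h^n = u_h^{n,i} - \tau_{n+1}\sum_j a_{ij}k_j$ inside each $(u_h^n,k_i)$, and rearrange to the standard algebraic-stability identity
\begin{equation*}
\|u_h^{n+1}\|_{L^2(\Omega)}^2-\|u_h^n\|_{L^2(\Omega)}^2=2\tau_{n+1}\sum_i b_i(u_h^{n,i},k_i)-\tau_{n+1}^2\sum_{i,j}M_{ij}(k_i,k_j),
\end{equation*}
where $M_{ij}:=b_ia_{ij}+b_ja_{ji}-b_ib_j$. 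For two-stage Lobatto IIIC one computes $M_{11}=M_{22}=\tfrac14$ and $M_{12}=M_{21}=-\tfrac14$, hence $\sum_{i,j}M_{ij}(k_i,k_j)=\tfrac14\|k_1-k_2\|_{L^2(\Omega)}^2\ge 0$, which is the algebraic stability of the scheme. It remains to evaluate $(u_h^{n,i},k_i)=-\|\nabla u_h^{n,i}\|_{L^2(\Omega)}^2-(P_h^{\text{RT}}\hat I_h u_h^{n,i}\cdot\nabla u_h^{n,i},u_h^{n,i})$, where I used $u_h^{n,i}\in X_h$ to drop $P_{X_h}$. The nonlinear part vanishes exactly because $P_h^{\text{RT}}\hat I_h u_h^{n,i}\in\text{RT}_0^1(\mathcal T_h)$ is pointwise divergence-free with zero normal trace, and $u_h^{n,i}\in H_0^1(\Omega)^2$, so that $(w\cdot\nabla v,v)=-\tfrac12(\nabla\cdot w,|v|^2)=0$. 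This step crucially exploits the exact (not merely weak) divergence-free property of $P_h^{\text{RT}}$, and ordinary $H^1$-conforming velocities would not suffice here.

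For \eqref{l2-numer-stability}, I would test the $i$th stage equation with $u_h^{n,i}$ to obtain
\begin{equation*}
\|u_h^{n,i}\|_{L^2(\Omega)}^2=(u_h^n,u_h^{n,i})-\tau_{n+1}\sum_j a_{ij}(\nabla u_h^{n,j},\nabla u_h^{n,i})-\tau_{n+1}\sum_{j\neq i}a_{ij}(P_h^{\text{RT}}\hat I_h u_h^{n,j}\cdot\nabla u_h^{n,j},u_h^{n,i}),
\end{equation*}
since the diagonal convective contribution vanishes by the same argument as in part one. Cauchy--Schwarz plus Young's inequality controls the first two terms and produces the $\|u_h^n\|_{L^2(\Omega)}^2$ and $\tau_{n+1}\sum_i\|\nabla u_h^{n,i}\|_{L^2(\Omega)}^2$ pieces of the target. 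For the off-diagonal convective term I would apply H\"older's inequality with the triple $(L^4,L^2,L^4)$, use \eqref{lp_estimate_RTFEM} at $p=4$ to bound $\|P_h^{\text{RT}}\hat I_h u_h^{n,j}\|_{L^4(\Omega)}\le C\|\hat I_h u_h^{n,j}\|_{L^2(\Omega)}^{1/2}\|\hat I_h u_h^{n,j}\|_{H^1(\Omega)}^{1/2}$, and invoke the 2D Ladyzhenskaya inequality for $\|u_h^{n,i}\|_{L^4(\Omega)}$. A suitable Young's splitting would then absorb the $L^2$-factor of $u_h^{n,i}$ into the left-hand side while producing the $\tau_{n+1}^2$-weighted trilinear expression of the stated form. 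The main obstacle will be calibrating this Young's splitting so that the right-hand side of \eqref{l2-numer-stability} carries exactly the stated powers of $\|\hat I_h u_h^{n,i}\|_{L^2(\Omega)}$, $\|\hat I_h u_h^{n,i}\|_{H^1(\Omega)}$ and $\|u_h^{n,i}\|_{H^1(\Omega)}$; this requires a symmetric balancing between the factors on $\hat I_h u_h^{n,j}$ and $u_h^{n,i}$ followed by summation over $i$ to symmetrize the $(i,j)$ and $(j,i)$ cross terms.
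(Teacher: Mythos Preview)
Your argument for \eqref{numer-solution-l2-estimate} is essentially identical to the paper's: you both expand $\|u_h^{n+1}\|_{L^2}^2$, substitute the stage equation to reach the algebraic-stability identity with the matrix $M_{ij}=b_ia_{ij}+b_ja_{ji}-b_ib_j$, discard the nonnegative quadratic form, and kill the diagonal convective contribution using that $P_h^{\text{RT}}\hat I_h u_h^{n,i}$ is exactly divergence-free.

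For \eqref{l2-numer-stability} your overall plan is correct, but the H\"older splitting you propose does not produce the stated form. Applying $(L^4,L^2,L^4)$ directly to $(P_h^{\text{RT}}\hat I_h u_h^{n,j}\cdot\nabla u_h^{n,j},u_h^{n,i})$ leaves the gradient on $u_h^{n,j}$ and, after your two Young steps, yields the mixed-index product $\|\hat I_h u_h^{n,j}\|_{L^2}^2\|\hat I_h u_h^{n,j}\|_{H^1}^2\|u_h^{n,i}\|_{H^1}^2$ with $j\neq i$. Your proposed ``symmetrization over $i$'' only swaps the roles of $i$ and $j$ (since $j=3-i$ for a two-stage method) and does not collapse this to the same-index sum in \eqref{l2-numer-stability}. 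The paper avoids this by using the divergence-free property of $P_h^{\text{RT}}\hat I_h u_h^{n,j}$ a second time: it bounds the convective term via the $H^{-1}$--$H^1$ pairing,
\[
\big(P_h^{\text{RT}}\hat I_h u_h^{n,j}\cdot\nabla u_h^{n,j},\,u_h^{n,i}\big)
\le \|u_h^{n,i}\|_{H^1(\Omega)}\,\|P_h^{\text{RT}}\hat I_h u_h^{n,j}\otimes u_h^{n,j}\|_{L^2(\Omega)},
\]
then applies $L^4\times L^4$ and Ladyzhenskaya to the tensor factor. This moves the gradient onto $u_h^{n,i}$, so after a first Young step it is absorbed into $C\tau_{n+1}\|\nabla u_h^{n,i}\|_{L^2}^2$, and the remaining product carries only the index $j$, namely $\|\hat I_h u_h^{n,j}\|_{L^2}\|\hat I_h u_h^{n,j}\|_{H^1}\|u_h^{n,j}\|_{L^2}\|u_h^{n,j}\|_{H^1}$; one more Young step then gives exactly the same-index trilinear term stated in \eqref{l2-numer-stability}. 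Your route yields a valid and essentially equivalent stability bound, but if you want the inequality exactly as written you should integrate by parts in the off-diagonal convective term before applying H\"older.
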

\begin{proof}
First, we rewrite the numerical scheme  \eqref{eqn:app}    as
\begin{subequations}
\begin{align}
&\dot{u}_h^{n,i}=A_hu_h^{n,i}-P_{X_h} (P_{h}^{\text{RT}}\hat{I}_{h}u_h^{n,i}\cdot \nabla u_h^{n,i}),\ i = 1,2, \label{rkii3}\\
&u_h^{n,i}=u_h^n+\tau_{n+1}\sum_{j=1}^{2}a_{ij}\dot{u}_h^{n,j},\ i = 1,2, \label{rkii4} \\
&u_h^{n+1}=u_h^n+\tau_{n+1}\sum_{i=1}^{2}b_{i}\dot{u}_h^{n,i}.\label{rkii5}
\end{align}
\end{subequations}
According to the \eqref{rkii5}, we conclude
\begin{align}
\|u_h^{n+1}\|_{L^2(\Omega)}^2= &\left( u_h^n+\tau_{n+1}\sum_{i=1}^{2}b_{i}\dot{u}_h^{n,i},u_h^n+\tau_{n+1}\sum_{i=1}^{2}b_{i}\dot{u}_h^{n,i}\right)\notag\\
=&\|u_h^{n}\|_{L^2(\Omega)}^2 +2 \tau_{n+1}\sum_{i=1}^{2} b_i (\dot{u}_h^{n,i},u_h^n)+\tau_{n+1}^2\sum_{i,j=1}^{2} b_i b_j(\dot{u}_h^{n,i},\dot{u}_h^{n,j}).\label{u_h_n_L2}
\end{align}
Substituting \eqref{rkii4} into the second term on the right-hand side of \eqref{u_h_n_L2}, we obtain
\begin{align*}
	\|u_h^{n+1}\|_{L^2(\Omega)}^2= &
\|u_h^{n}\|_{L^2(\Omega)}^2 +2 \tau_{n+1}\sum_{i=1}^{2} b_i \Big(\dot{u}_h^{n,i},u_h^{n,i}-\tau_{n+1}\sum_{j=1}^{2}a_{ij}\dot{u}_h^{n,j}\Big)\\
& +\tau_{n+1}^2
\sum_{i,j=1}^{2} b_i b_j(\dot{u}_h^{n,i},\dot{u}_h^{n,j}).
\end{align*}
Hence
\begin{align*}
\|u_h^{n+1}\|_{L^2(\Omega)}^2= \|u_h^{n}\|_{L^2(\Omega)}^2 +2 \tau_{n+1}\sum_{i=1}^{2} b_i (\dot{u}_h^{n,i},u_h^{n,i})-\tau_{n+1}^2
\sum_{i,j=1}^{2} d_{ij}(\dot{u}_h^{n,i},\dot{u}_h^{n,j}),
\end{align*}
with $d_{ij}=b_ia_{ij}+b_ja_{ji}-b_ib_j$, $i,j=1,2$.
The scheme is algebraic stable, i.e.  the symmetric matrix  $(d_{ij})$
is positive semidefinite. Therefore,
\begin{align}
\label{energy2}
&\|u_h^{n+1}\|_{L^2(\Omega)}^2\le \|u_h^{n}\|_{L^2(\Omega)}^2 + 2\tau_{n+1}\sum_{i=1}^{2} b_i(\dot{u}_h^{n,i},u_h^{n,i}).
\end{align}
Testing \eqref{rkii3} with $u_h^{n,i}$ yields
\begin{align}
\label{energy3}
(\dot{u}_h^{n,i},u_h^{n,i})=-\|\nabla u_h^{n,i}\|_{L^2(\Omega)}^2- (P_{X_h} (P_{h}^{\text{RT}}\hat{I}_{h}u_h^{n,i}\cdot \nabla u_h^{n,i}),u_h^{n,i}),\ i = 1,2.
\end{align}
Note that $P_{h}^{\text{RT}}\hat{I}_{h}u_h^{n,i}$ is divergence free. Then we have
\begin{align*}
(P_{h}^{\text{RT}}\hat{I}_{h}u_h^{n,i}\cdot \nabla u_h^{n,i},u_h^{n,i})=\left(P_{h}^{\text{RT}}\hat{I}_{h}u_h^{n,i}, \nabla\frac{1}{2}|u_h^{n,i}|^2 \right)=-\left(\nabla \cdot P_{h}^{\text{RT}}\hat{I}_{h}u_h^{n,i},\frac{1}{2}|u_h^{n,i}|^2\right)=0.
\end{align*}
As a result, we  obtain the inequality \eqref{numer-solution-l2-estimate} by
substituting \eqref{energy3}   into \eqref{energy2}. 

 To prove the $ L^{2} $ boundedness of $ u_{h}^{n,i} $, we test the equation \eqref{eqn:app_a} with $ u_h^{n,i} $ and obtain
	\begin{align}
		\| u_h^{n,i} \|_{L^{2}(\Omega)}^{2} = & (u_h^n , u_h^{n,i}) - \tau_{n+1} \sum_{j=1}^2 a_{ij} \Big[ (\nabla u_h^{n,j}, \nabla u_h^{n,i}) + (P_h^{\text{RT}} \hat{I}_h u_h^{n,j}\cdot \nabla u_h^{n,j}, u_h^{n,i})\Big] \notag \\
		\le & \frac{1}{2}\| u_h^{n,i} \|_{L^{2}(\Omega)}^{2} + \frac{1}{2}\| u_h^n \|_{L^{2}(\Omega)}^{2} + C \tau_{n+1} \sum_{j=1}^2 \| \nabla u_h^{n,j} \|_{L^{2}(\Omega)}^{2} \notag \\
			& + \tau_{n+1}\| u_{h}^{n,i} \|_{H^{1}(\Omega)} \sum_{j=1}^2 \| P_h^{\text{RT}} \hat{I}_h u_h^{n,j} \cdot \nabla u_h^{n,j} \|_{H^{-1}(\Omega)}\\
		\le & \frac{1}{2}\| u_h^{n,i} \|_{L^{2}(\Omega)}^{2} + \frac{1}{2}\| u_h^n \|_{L^{2}(\Omega)}^{2} + C \tau_{n+1} \sum_{j=1}^2 \| \nabla u_h^{n,j} \|_{L^{2}(\Omega)}^{2} \notag\\
			& +C \tau_{n+1}\sum_{j=1}^2 \| P_h^{\text{RT}} \hat{I}_h u_h^{n,j} \otimes u_h^{n,j} \|_{L^{2}(\Omega)}^{2}. \label{l2_estimate_uhni_mid}
	\end{align}
	By using H\"older's inequality, the estimate \eqref{lp_estimate_RTFEM}, we have
	\begin{align}
		\| P_h^{\text{RT}} \hat{I}_h u_h^{n,j} \otimes u_h^{n,j} \|_{L^{2}(\Omega)}^2 \le & \| P_{h}^{\text{RT}} \hat{I}_h u_h^{n,j} \|_{L^{4}(\Omega)}^{2} \| u_{h}^{n,j} \|_{L^{4}(\Omega)}^{2}\notag\\
		\le & C \| \hat{I}_h u_{h}^{n,j} \|_{L^{2}(\Omega)} \| \hat{I}_h u_{h}^{n,j} \|_{H^{1}(\Omega)} \| u_{h}^{n,j} \|_{L^{2}(\Omega)} \| u_h^{n,j} \|_{H^{1}(\Omega)}\label{l2estimate_PhRT_Ih_uhnj}
	\end{align}
	Substituting \eqref{l2estimate_PhRT_Ih_uhnj} into \eqref{l2_estimate_uhni_mid}, summing up the obtained inequality with respect to $ i $ from $ i=1 $ to $ i=2 $, and using Young's inequality, we obtain the desired result \eqref{l2-numer-stability}.
\end{proof}

Then next lemma gives an \text{a priori} estimate  for $\| \nabla u_{h}^{n,i} \|_{L^{2}(\Omega)}$.
\begin{lemma}\label{H1-estimate-numerical-solution}
	If $ u_{0} \in \dot L^{2}(\Omega) $, then the fully discrete scheme \eqref{eqn:app} satisfy
    \begin{align}
        \sum_{i=1}^{2}\| \nabla u_{h}^{n,i} \|_{L^{2}(\Omega)} \le C t_{n+1}^{-1/2},\; \; \; \; \text{for} \; \; \; \; n \ge 0. \label{H1_estimate_intenal}
    \end{align}
\end{lemma}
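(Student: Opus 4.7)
The plan is a weighted discrete energy argument, exploiting the stiffly accurate property $u_h^{n,2} = u_h^{n+1}$ of the Lobatto IIIC scheme together with an induction on $n$. The base case $n = 0$ is immediate from Lemma \ref{leelle3.1}: since $t_1 = \tau_1$, one has $\tau_1 b_i \|\nabla u_h^{0,i}\|^2 \le \tfrac{1}{2}\|u_0\|_{L^2}^2$, which directly yields $\|\nabla u_h^{0,i}\|_{L^2} \le C t_1^{-1/2}$.

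For the inductive step, I would test \eqref{eqn:app_a} with $-A_h u_h^{n,i}$ (rather than with $u_h^{n,i}$ as in Lemma \ref{leelle3.1}). The skew-symmetry of the convective term against the test function is no longer available, so the nonlinear contribution is estimated by Cauchy--Schwarz; algebraic stability of Lobatto IIIC still renders the quadratic form in the stage derivatives nonnegative, and $u_h^{n,2} = u_h^{n+1}$ lets me identify the last stage with the endpoint. The outcome is a discrete identity of the form
\begin{equation*}
\|\nabla u_h^{n+1}\|_{L^2}^2 - \|\nabla u_h^n\|_{L^2}^2 + c\tau_{n+1}\sum_i \|A_h u_h^{n,i}\|_{L^2}^2 \le C\tau_{n+1}\sum_i \|P_h^{\text{RT}}\hat I_h u_h^{n,i}\|_{L^\infty}^2 \|\nabla u_h^{n,i}\|_{L^2}^2 .
\end{equation*}
The $L^\infty$ factor is then bounded via Lemma \ref{discreteinter} together with \eqref{lp_estimate_RTFEM} by $C\|\hat I_h u_h^{n,i}\|_{L^2}^{1/2}\|A_h \hat I_h u_h^{n,i}\|_{L^2}^{1/2}$, and the resulting $\|A_h\cdot\|$ factor can be absorbed into the left-hand side by $\epsilon$-Young.

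Multiplying this inequality by $t_{n+1}$, summing in $n$, and performing summation by parts converts the left-hand side into a discrete analogue of $t_{m+1}\|\nabla u_h^{m+1}\|^2 + \sum_k t_{k+1}\tau_{k+1}\|A_h u_h^{k,i}\|^2$. The weight shift $t_{n+1} - t_n = \tau_{n+1}$ produces on the right the unweighted dissipation $\sum_k \tau_{k+1}\|\nabla u_h^{k,i}\|^2$, which is already bounded by $C\|u_0\|_{L^2}^2$ through Lemma \ref{leelle3.1}, plus a discrete convolution coming from the nonlinear term. A discrete Gronwall inequality, together with the smallness of $\|u_h\|_{L^2(0,T;\dot H^1_0)}$ on a short interval (as invoked at the end of the proof of Theorem \ref{thsemi}), then closes the bound $t_{m+1}\|\nabla u_h^{m+1,i}\|_{L^2}^2 \le C$.

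The principal obstacle is controlling the extrapolation $\hat I_h u_h^{n,i}$: by definition it involves $u_h^n - u_h^{n-1}$ rescaled by $c_i\tau_{n+1}/\tau_n$, which a priori inherits the singularity of the discrete time derivative. The graded-stepsize property $\tau_{n+1}/\tau_n \sim 1$ from \eqref{time1} is indispensable here, as it ensures that $\|\hat I_h u_h^{n,i}\|_{L^2}$, $\|\hat I_h u_h^{n,i}\|_{H^1}$, and $\|A_h\hat I_h u_h^{n,i}\|_{L^2}$ are all controlled without amplification by the corresponding norms of $u_h^n$ and $u_h^{n-1}$; without it, the induction constant would degrade step by step and the target bound would fail.
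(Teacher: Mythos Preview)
Your overall architecture (weighted $H^1$ energy, algebraic stability, $t_{n+1}$-weighting, summation, Gronwall) matches the paper's, but the treatment of the nonlinear term has a genuine gap. After the $L^\infty\times L^2$ splitting and your $\epsilon$-Young on
\[
C\tau_{n+1}\|\hat I_h u_h^{n,i}\|_{L^2}\,\|A_h\hat I_h u_h^{n,i}\|_{L^2}\,\|\nabla u_h^{n,i}\|_{L^2}^2,
\]
you obtain $\epsilon\tau_{n+1}\|A_h\hat I_h u_h^{n,i}\|_{L^2}^2 + C_\epsilon\tau_{n+1}\|\nabla u_h^{n,i}\|_{L^2}^4$. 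The first term can indeed be shifted and absorbed after summation (since $\hat I_h u_h^{n,i}$ is built from $u_h^{n}=u_h^{n-1,2}$ and $u_h^{n-1}=u_h^{n-2,2}$, and $\tau_{n+1}\sim\tau_n$). The residual quartic term is the problem: after weighting it becomes $(t_{n+1}\|\nabla u_h^{n,i}\|^2)(\tau_{n+1}\|\nabla u_h^{n,i}\|^2)$, and for the genuine internal stage $i=1$ (which is not an endpoint) you have no a priori control on $t_{n+1}\|\nabla u_h^{n,1}\|^2$. Gronwall therefore cannot close, because the ``Gronwall quantity'' must sit at endpoints, not at stages.

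The paper avoids this by replacing the $L^\infty\times L^2$ splitting with the Ladyzhenskaya-type inequality
\[
\|P_h^{\text{RT}}\hat I_h u_h^{n,i}\cdot\nabla u_h^{n,i}\|_{L^2}
\le C\|\hat I_h u_h^{n,i}\|_{L^2}^{1/2}\|\hat I_h u_h^{n,i}\|_{H^1}^{1/2}\|u_h^{n,i}\|_{H^1}^{1/2}\|A_h u_h^{n,i}\|_{L^2}^{1/2},
\]
which places the $A_h$ factor on the \emph{current} stage $u_h^{n,i}$, not on the extrapolation. Combined with the relation $\|A_h u_h^{n,i}\|\le\|\dot u_h^{n,i}\|+\|\text{nonlin}\|$, this allows absorption by the same step's coercive term $\tau_{n+1}\|\dot u_h^{n,i}\|^2$, leaving a right-hand side of the form $C\tau_{n+1}\|\hat I_h u_h^{n,i}\|_{L^2}^2\|\hat I_h u_h^{n,i}\|_{H^1}^2\|u_h^{n,i}\|_{H^1}^2$. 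After $t_{n+1}$-weighting this factors as (endpoint $t\|\nabla\cdot\|^2$) $\times$ (stage $\tau\|\nabla\cdot\|^2$), and the latter is summable by Lemma~\ref{leelle3.1}, so Gronwall closes. Note also that the paper first obtains only the endpoint bound $t_{m+1}\|\nabla u_h^{m+1}\|^2\le C$ from this Gronwall step, and then derives the internal-stage bound $\|\nabla u_h^{n,1}\|\le Ct_{n+1}^{-1/2}$ by a separate argument (testing \eqref{rkii3} with $u_h^{n,i}$ and invoking the endpoint bound just established); your proposal conflates these two stages.
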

\begin{proof}
First of all, we note that the inequality \eqref{H1_estimate_intenal} holds when $ n = 0, 1, 2$
according to Lemma \ref{leelle3.1}. Then for $ n \ge 3 $, taking gradient on both sides of \eqref{rkii5} and squaring, we have
    \begin{align*}
	    \| \nabla u_{h}^{n+1} \|_{L^{2}(\Omega)}^{2} = \| \nabla u_{h}^{n} \|_{L^{2}(\Omega)}^{2} + 2 \tau_{n+1}\sum_{i=1}^{2}b_{i} (\nabla \dot{u}_{h}^{n,i},\nabla u_{h}^{n}) + \tau_{n+1}^{2}\sum_{i,j=1}^{2}b_{i}b_{j} (\nabla \dot{u}_{h}^{n,i}, \nabla \dot{u}_{h}^{n,j}).
    \end{align*}
Meanwhile, we recall \eqref{rkii4} and obtain
    \begin{align*}
        (\nabla u_{h}^{n}, \nabla \dot u_{h}^{n,i}) = (\nabla u_{h}^{n,i}, \nabla \dot u_{h}^{n,i}) - \tau_{n+1}\sum_{j=1}^{2} a_{ij} (\nabla \dot u_{h}^{n,j}, \nabla \dot u_{h}^{n,i})
    \end{align*}
Therefore,
\begin{align*}
	    \| \nabla u_{h}^{n+1} \|_{L^{2}(\Omega)}^{2} =& \| \nabla u_{h}^{n} \|_{L^{2}(\Omega)}^{2} + 2 \tau_{n+1}\sum_{i=1}^{2}b_{i} (\nabla u_{h}^{n,i},\nabla \dot u_{h}^{n,i}) - 2 \tau_{n+1}^{2}\sum_{i,j=1}^{2}a_{ij}b_{i} (\nabla \dot u_{h}^{n,i},\nabla \dot u_{h}^{n,j})\\
	&+ \tau_{n+1}^{2}\sum_{i,j=1}^{2}b_{i}b_{j} (\nabla \dot{u}_{h}^{n,i}, \nabla \dot{u}_{h}^{n,j})
    \end{align*}
Then we apply the algebraical stability of the scheme to obtain 
    \begin{align}\label{H1-energy-estimate-dot}
        \| \nabla u_{h}^{n+1} \|_{L^{2}(\Omega)}^{2} \le \| \nabla u_{h}^{n} \|_{L^{2}(\Omega)}^{2} + 2 \tau_{n+1}\sum_{i=1}^{2}b_{i} (\nabla u_{h}^{n,i},\nabla \dot u_{h}^{n,i}).
    \end{align}
    Testing \eqref{rkii3} with $ \dot u_{h}^{n,i} $, we have
    \begin{align}\label{gradu-graddotu-estimate}
        (\nabla u_{h}^{n,i}, \nabla \dot u_{h}^{n,i}) = - \| \dot u_{h}^{n,i} \|_{L^{2}(\Omega)}^{2}  - (P_{h}^{\text{RT}} \hat{I}_{h}u_{h}^{n,i}\cdot \nabla u_{h}^{n,i}, \dot u_{h}^{n,i}).
    \end{align}
    Substituting \eqref{gradu-graddotu-estimate} into \eqref{H1-energy-estimate-dot}, we have
    \begin{align*}
        \| \nabla u_{h}^{n+1} \|_{L^{2}(\Omega)}^{2} \le \| \nabla u_{h}^{n} \|_{L^{2}(\Omega)}^{2} - 2 \tau_{n+1}\sum_{i=1}^{2}b_{i}\| \dot u_{h}^{n,i} \|_{L^{2}(\Omega)}^{2} - 2 \tau_{n+1}\sum_{i=1}^{2}b_{i}(P_{h}^{\text{RT}}\hat{I}_{h} u_{h}^{n,i}\cdot \nabla u_{h}^{n,i}, \dot u_{h}^{n,i}).
    \end{align*}
    Since $ b_{i} > 0$ for each $ i = 1,2 $, by H\"older's inequality, we have
    \begin{equation}\label{H1-energy-middl3-nonlinear}
    \begin{aligned}
        \| \nabla u_{h}^{n+1} \|_{L^{2}(\Omega)}^{2} + & \tau_{n+1}\sum_{i=1}^{2}b_{i}\| \dot u_{h}^{n,i} \|_{L^{2}(\Omega)}^{2} \\
     &\qquad  \le \| \nabla u_{h}^{n} \|_{L^{2}(\Omega)}^{2} + C \tau_{n+1}\sum_{i=1}^{2}\| P_{h}^{\text{RT}} \hat{I}_{h}u_{h}^{n,i}\cdot \nabla u_{h}^{n,i} \|_{L^{2}(\Omega)}^{2}.
    \end{aligned}
    \end{equation}
    According to \cite[Lemma 3.1]{2DNS2022Li} and estimate \eqref{lp_estimate_RTFEM}, we have
    \begin{align}\label{numerical-nonlinear-l2}
        \| P_{h}^{\text{RT}} \hat{I}_{h}u_{h}^{n,i}\cdot \nabla u_{h}^{n,i} \|_{L^{2}(\Omega)} \le \|\hat{I}_{h} u_{h}^{n,i} \|_{L^{2}(\Omega)}^{1/2}\| \hat{I}_{h}u_{h}^{n,i} \|_{H^{1}(\Omega)}^{1/2} \| u_{h}^{n,i} \|_{H^{1}(\Omega)}^{1/2} \| A_{h}u_{h}^{n,i} \|_{L^{2}(\Omega)}^{1/2},
    \end{align}
    where we have used interpolation inequality. By \eqref{rkii3}, we have
    \begin{align}\label{numerical-laplacian-l2}
        \| A_{h}u_{h}^{n,i} \|_{L^{2}(\Omega)} \le \| \dot u_{h}^{n,i} \|_{L^{2}(\Omega)} + C \| P_{h}^{\text{RT}}\hat{I}_{h}u_{h}^{n,i}\cdot \nabla u_{h}^{n,i} \|_{L^{2}(\Omega)}.
    \end{align}
    Substituting \eqref{numerical-laplacian-l2} into \eqref{numerical-nonlinear-l2} and using Young's inequality, we have
    \begin{equation}\label{nonlinear-numer-final}
    \begin{aligned}
	    \| P_{h}^{\text{RT}}\hat{I}_{h} u_{h}^{n,i}\cdot \nabla u_{h}^{n,i} \|_{L^{2}(\Omega)} \le &  C\| \hat{I}_{h}u_{h}^{n,i} \|_{L^{2}(\Omega)}^{1/2}\| \hat{I}_{h}u_{h}^{n,i} \|_{H^{1}(\Omega)}^{1/2} \| u_{h}^{n,i} \|_{H^{1}(\Omega)}^{1/2}\| \dot u_{h}^{n,i} \|_{L^{2}(\Omega)}^{1/2}\\
	& + C \|\hat{I}_{h} u_{h}^{n,i} \|_{L^{2}(\Omega)}\| \hat{I}_{h}u_{h}^{n,i} \|_{H^{1}(\Omega)} \| u_{h}^{n,i} \|_{H^{1}(\Omega)}.
    \end{aligned}
    \end{equation}
Now we  substitute  this estimate into \eqref{H1-energy-middl3-nonlinear} and absorb $ \| \dot u_{h}^{n,i} \|_{L^{2}(\Omega)} $ on the right-hand side by using Young's inequality. Following from these steps and the definition of the extrapolation operator $ \hat{I}_{h} $, we obtain for $n\ge 3$
    \begin{equation*}
    \begin{aligned}
	  &\frac{1}{2}\tau_{n+1}\sum_{i=1}^{2}b_{i}\| \dot u_{h}^{n,i} \|_{L^{2}(\Omega)}^{2} + \| \nabla u_{h}^{n+1} \|_{L^{2}(\Omega)}^{2} - \| \nabla u_{h}^{n} \|_{L^{2}(\Omega)}^{2}\\	  \le &  C \tau_{n+1}\sum_{i=1}^{2} \|\hat{I}_{h} u_{h}^{n,i} \|_{L^{2}(\Omega)}^{2}\left(\|u_{h}^{n-1} \|_{H^{1}(\Omega)}^{2} + \| u_{h}^{n} \|_{H^{1}(\Omega)}^{2}\right)\| u_{h}^{n,i} \|_{H^{1}(\Omega)}^{2}.
    \end{aligned}
        \end{equation*}
 Due to the $ L^{2} $ boundedness of $ \hat{I}_h u_h^{n,i} $, we can Multiply $ t_{n+1} $ on both sides of the above estimate and obtain
			\begin{align}
				& \frac{1}{2}t_{n+1}\tau_{n+1} \sum_{i=1}^2 b_i \| \dot u_h^{n,i} \|_{L^{2}(\Omega)}^{2} + t_{n+1} \| \nabla u_h^{n+1} \|_{L^{2}(\Omega)}^{2} - t_n \| \nabla u_h^n \|_{L^{2}(\Omega)}^{2}\notag \\
				\le & \tau_{n+1}\| \nabla u_h^n \|_{L^{2}(\Omega)}^{2} + C \Big( t_{n-1} \| u_h^{n-1} \|_{H^{1}(\Omega)}^{2} + t_n \| u_h^n \|_{H^{1}(\Omega)}^{2}\Big) \tau_{n+1}\sum_{i=1}^2 \| u_h^{n,i} \|_{H^{1}(\Omega)}^{2} \notag \\
					& + C \Big( (\tau_n + \tau_{n+1}) \| u_h^{n-1} \|_{H^{1}(\Omega)}^{2} + \tau_{n+1}\| u_h^{n} \|_{H^{1}(\Omega)}^{2}\Big)\tau_{n+1} \sum_{i=1}^2 \| u_h^{n,i} \|_{H^{1}(\Omega)}^{2}. \label{tn1_H1_estimate_numerical}
			\end{align}
			From \eqref{numer-solution-l2-estimate}, we have that
			\begin{align}\label{l2H1_boundedness}
			    2\sum_{n=0}^m \tau_{n+1} \sum_{i=1}^2 b_i \| \nabla u_h^{n,i} \|_{L^{2}(\Omega)}^{2} \le \| u_h^0 \|_{L^{2}(\Omega)}^{2}.
			\end{align}
			Since $ \tau_{n-1}\sim \tau_n \sim \tau_{n+1} $ and $ t_3 \sim \tau_3 $, we can sum up \eqref{tn1_H1_estimate_numerical} with respect to $ n $ from $ 3 $ to $ m $ and obtain the following inequality in combination with \eqref{l2H1_boundedness}
			\begin{align*}
				 & \frac{1}{2}\sum_{n=3}^m t_{n+1}\tau_{n+1} \sum_{i=1}^{2} b_i \| \dot u_{h}^{n,i} \|_{L^{2}(\Omega)}^{2} + t_{m+1}\| \nabla u_h^{m+1} \|_{L^{2}(\Omega)}^{2} \\
				\le & C + C \sum_{n=3}^m \Big( t_{n-1} \| u_h^{n-1} \|_{H^{1}(\Omega)}^{2} + t_n \| u_h^n \|_{H^{1}(\Omega)}^{2}\Big) \tau_{n+1}\sum_{i=1}^2 \| u_h^{n,i} \|_{H^{1}(\Omega)}^{2}
			\end{align*}
			By using discrete Gronwall's inequality and \eqref{l2H1_boundedness}, we then obtain that
			\begin{align}\label{uhnidot_nablauhm_estimate}
			    \frac{1}{2}\sum_{n=3}^m t_{n+1}\tau_{n+1} \sum_{i=1}^{2} b_i \| \dot u_{h}^{n,i} \|_{L^{2}(\Omega)}^{2} + t_{m+1}\| \nabla u_h^{m+1} \|_{L^{2}(\Omega)}^{2} \le C.
			\end{align}

    Based on \eqref{energy3}, we have
    \begin{align}
\label{regu1}
\|\nabla u_h^{n,i}\|_{L^2(\Omega)}^2=&-(\dot{u}_h^{n,i},u_h^{n,i})=
(\dot{u}_h^{n,i},u_h^n+\tau_{n+1}\sum_{j=1}^{2}a_{ij}\dot{u}_h^{n,j})\notag\\
\le& \|\dot{u}_h^{n,i}\|_{H^{-1}(\Omega)}\|u_h^n\|_{H^{1}(\Omega)}
+C\tau_{n+1}\|\dot{u}_h^{n,i}\|_{L^{2}(\Omega)}\sum_{j=1}^{2}
\|\dot{u}_h^{n,j}\|_{L^{2}(\Omega)}.
\end{align}
It follows from \eqref{rkii3} and \eqref{lp_estimate_RTFEM} that
    \begin{align}
\label{regu22222}
\|\dot{u}_h^{n,i}\|_{H^{-1}(\Omega)} \le &C\| u_h^{n,i}\|_{H^{1}(\Omega)}+C\|P_{h}^{\text{RT}}\hat{I}_{h}u_h^{n,i}\otimes u_h^{n,i}\|_{L^{2}(\Omega)}\notag\\
\le &C\| u_h^{n,i}\|_{H^{1}(\Omega)}+C\|\hat{I}_{h}u_h^{n,i}\|_{L^{2}(\Omega)}^{1/2}\|\hat{I}_{h}u_h^{n,i}\|_{H^{1}(\Omega)}^{1/2}\|u_h^{n,i}\|_{L^{2}(\Omega)}^{1/2}\|u_h^{n,i}\|_{H^{1}(\Omega)}^{1/2}.
\end{align}
Substituting \eqref{regu22222} into \eqref{regu1}, it gives
    \begin{align*}
&\|\nabla u_h^{n,i}\|_{L^2(\Omega)}^2\notag\\ \le &C\|u_h^n\|_{H^{1}(\Omega)}\| u_h^{n,i}\|_{H^{1}(\Omega)}+C\|u_h^n\|_{H^{1}(\Omega)}\|\hat{I}_{h}u_h^{n,i}\|_{H^{1}(\Omega)}^{1/2}\|u_h^{n,i}\|_{H^{1}(\Omega)}^{1/2}
+C\tau_{n+1}\sum_{j=1}^{2}
\|\dot{u}_h^{n,j}\|_{L^{2}(\Omega)}^2\notag\\
\le & Ct_{n}^{-1/2}\| u_h^{n,i}\|_{H^{1}(\Omega)}+Ct_{n}^{-3/4}\|u_h^{n,i}\|_{H^{1}(\Omega)}^{1/2}+
C\tau_{n+1}\sum_{j=1}^{2}
\|\dot{u}_h^{n,j}\|_{L^{2}(\Omega)}^2\notag\\
\le & \delta \| u_h^{n,i}\|_{H^{1}(\Omega)}^2+C t_{n+1}^{-1}+
C\tau_{n+1}\sum_{j=1}^{2}
\|\dot{u}_h^{n,j}\|_{L^{2}(\Omega)}^{2,}
\end{align*}
where $ \delta > 0 $ is a sufficiently small number so that the first term can be absorbed by the left-hand side.  Combining \eqref{uhnidot_nablauhm_estimate}, we obtain the desired estimate.
\end{proof}

\subsection{Error analysis}
The following lemma gives an \text{a priori} error bound for the time discretization.
\begin{lemma}\label{H_negative_estimate_error}
Let $ u_h(t_{n+1}) $ be the solution to the semidiscrete scheme \eqref{FEM-num-solution1}. If the time stepsizes satisfy \eqref{time1} with a fixed constant $\alpha \in \left(\frac{3}{4},1\right)$ and $ u_h^{n+1} $ is the solution to the fully discrete scheme \eqref{eqn:app}, 
the error $ \eta^{n+1}:= u_h^{n+1}  - u_h(t_{n+1})  $ satisfies the following error bound
    \begin{align}
	    \Big(\sum_{l=0}^{n}\tau_{l+1}\sum_{i=1}^{2}b_{i}\| \eta^{l,i} \|_{L^{2}(\Omega)}^{2}\Big)^{1/2} + \|\eta^{n+1}\|_{H^{-1}(\Omega)}\le
 Ct_{n+1}^{-3/2-\varepsilon}\tau_{n+1}^{2}.
    \end{align}
Here $ \varepsilon \in  (0, 2 \alpha - 3/2) $ could be arbitrarily small.
\end{lemma}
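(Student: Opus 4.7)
The plan is to run an energy argument in the weakest dual norm $H^{-1}$ on the error recursion \eqref{eqn:error-eq}, in parallel with the discrete energy decay argument of Lemma \ref{leelle3.1} but applied to $\eta^{n+1}$ instead of $u_h^{n+1}$. Since $(-A_h v_h, v_h)=\|\nabla v_h\|_{L^2(\Omega)}^2$ on $X_h$ and $\|v_h\|_{H^{-1}(\Omega)}^2\sim (-A_h^{-1}v_h, v_h)$ for divergence-free $v_h$, the natural test function for the stage equation $\dot\eta^{n,i}=A_h\eta^{n,i}+T^{n,i}$ is $-A_h^{-1}\eta^{n,i}$. Substituting this into the Runge--Kutta relations in \eqref{eqn:error-eq} and invoking the algebraic stability of the Lobatto IIIC tableau (positive semidefiniteness of $(d_{ij})$), exactly as in the derivation of \eqref{numer-solution-l2-estimate}, yields the one-step inequality
\begin{equation*}
\|\eta^{n+1}\|_{H^{-1}(\Omega)}^2 + 2\tau_{n+1}\sum_{i=1}^{2} b_i \|\eta^{n,i}\|_{L^2(\Omega)}^2 \le \|\eta^n\|_{H^{-1}(\Omega)}^2 + 2\tau_{n+1}\sum_{i=1}^{2} b_i \mathcal{R}^{n,i},
\end{equation*}
where $\mathcal R^{n,i}$ gathers the inner products of $T^{n,i}$, $\mathcal G^{n,j}$, and the quadrature residuals $Q_{n,i}(\partial_t u_h)$, $Q_{n+1}(\partial_t u_h)$ against $-A_h^{-1}\eta^{n,i}$.

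For the three remainder contributions I proceed as follows. The extrapolation defect $\mathcal G^{n,i}$ is handled directly by its $H^{-1}$ bound \eqref{G_estimate_H_negative}, giving $\|\mathcal G^{n,i}\|_{H^{-1}(\Omega)}\le C\tau_{n+1}^{2}t_{n+1}^{-5/2}$. The quadrature residuals are bounded through \eqref{eqn:kutta-err} with $\|\cdot\|=\|\cdot\|_{H^{-1}(\Omega)}$ coupled with the negative-norm regularity of the semi-discrete solution from Corollary \ref{11regularity-numerical-u}, producing terms of size $\tau_{n+1}^{2}t_{n+1}^{-3/2}$ (internal stages) and $\tau_{n+1}^{3}t_{n+1}^{-5/2}$ (outer formula). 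The delicate piece is the nonlinear mismatch $T^{n,i}$, which I split as
\begin{equation*}
T^{n,i}=-P_{X_h}\bigl(P_h^{\text{RT}}\hat I_h \eta^{n,i}\cdot\nabla u_h(t_{n,i})\bigr)-P_{X_h}\bigl(P_h^{\text{RT}}\hat I_h u_h^{n,i}\cdot\nabla \eta^{n,i}\bigr),
\end{equation*}
integrate by parts to move $\nabla$ onto $A_h^{-1}\eta^{n,i}$ using $\|\nabla A_h^{-1}\eta^{n,i}\|_{L^2(\Omega)}=\|\eta^{n,i}\|_{H^{-1}(\Omega)}$, and then apply \eqref{lp_estimate_RTFEM}, Lemma \ref{discreteinter}, the $H^1$ bound of Lemma \ref{H1-estimate-numerical-solution}, and the sharp $L^\infty$ bound \eqref{linf_estimate_shalf} to reach estimates of the form $Ct_{n+1}^{-1/2}\|\eta^{n,i}\|_{L^2(\Omega)}\|\eta^{n,i}\|_{H^{-1}(\Omega)}$, plus analogous terms involving $\eta^{n-1}$ coming from $\hat I_h$.

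Collecting everything with Young's inequality and using $\tau_n\sim\tau_{n+1}$, I arrive at a recursion of the form
\begin{equation*}
\|\eta^{n+1}\|_{H^{-1}(\Omega)}^2 + \tau_{n+1}\sum_{i=1}^{2}b_i\|\eta^{n,i}\|_{L^2(\Omega)}^2 \le \bigl(1+C\tau_{n+1}t_{n+1}^{-1}\bigr)\|\eta^n\|_{H^{-1}(\Omega)}^2 + C\tau_{n+1}^{5}t_{n+1}^{-5},
\end{equation*}
to which I apply the discrete Gronwall inequality and then sum over $l=0,\dots,n$. The graded mesh assumption $\alpha>3/4$ enters precisely here: using $\tau_{l+1}\sim(t_l/T)^{\alpha}\tau$ and the integral comparison $\sum_{l\le n}\tau_{l+1}^{5}t_{l+1}^{-5}\sim \tau^{4}\int_{0}^{t_{n+1}} s^{4\alpha-5}\,ds$, the integral is finite and of order $t_{n+1}^{4\alpha-4}$, which combined with $\tau_{n+1}^{2}\sim\tau^{2}t_{n+1}^{2\alpha}$ reproduces the advertised bound $t_{n+1}^{-3-2\varepsilon}\tau_{n+1}^{4}$ for any $\varepsilon\in(0,2\alpha-\tfrac32)$; taking a square root gives the claim.

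\textbf{Main obstacle.} The hard part will be controlling the nonlinear contribution $T^{n,i}$ in the negative-norm pairing without losing factors of $t^{-1/2}$ faster than the dissipation $\tau_{n+1}\|\eta^{n,i}\|_{L^2(\Omega)}^2$ can absorb. A second subtlety is that $\hat I_h$ involves $u_h^{n-1}$, so the recursion secretly couples $\eta^n$ with $\eta^{n-1}$; this must be handled either by defining a composite energy $E^n:=\|\eta^n\|_{H^{-1}(\Omega)}^2+\tau_n\sum_i\|\eta^{n-1,i}\|_{L^2(\Omega)}^2$ before invoking Gronwall, or by absorbing the lagged term via $\|\eta^{n-1}\|_{H^{-1}(\Omega)}\le\|\eta^{n}\|_{H^{-1}(\Omega)}+\tau_n\|\dot\eta^{n-1,i}\|_{H^{-1}(\Omega)}$ and iterating.
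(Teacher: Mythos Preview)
Your overall architecture matches the paper's: run the Lobatto IIIC energy argument on \eqref{eqn:error-eq} in the $H^{-1}$ inner product $(-A_h^{-1}\cdot,\cdot)$, invoke algebraic stability, and then Gronwall. The gap is in the two exponents you land on in your one--step recursion
\[
\|\eta^{n+1}\|_{H^{-1}}^2 + \tau_{n+1}\sum_i b_i\|\eta^{n,i}\|_{L^2}^2
\le (1+C\tau_{n+1}t_{n+1}^{-1})\|\eta^n\|_{H^{-1}}^2 + C\tau_{n+1}^{5}t_{n+1}^{-5}.
\]
Both the Gronwall coefficient and the forcing are exactly critical, and neither sum closes.

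For the Gronwall factor, $\sum_{l\le n}\tau_{l+1}t_{l+1}^{-1}\sim\log(t_{n+1}/t_1)$ with $t_1=\tau^{1/(1-\alpha)}$, so the product $\prod(1+C\tau_{l+1}t_{l+1}^{-1})$ blows up like a negative power of $\tau$. The paper avoids this by \emph{not} replacing $\|u_h^{n,i}\|_{L^4}$, $\|P_h^{\rm RT}\hat I_h u_h(t_{n,i})\|_{L^4}$ with the pointwise bound $t_{n+1}^{-1/4}$; instead it keeps the coefficient as $\tau_{n+1}\bigl(\|u_h^{n,i}\|_{H^1}^2+\|\hat I_h u_h(t_{n,i})\|_{H^1}^2\bigr)$, which is summable by the discrete $L^2(0,T;H^1)$ bound \eqref{l2H1_boundedness}. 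To reach that form the paper pairs $T^{n,i}$ against $\eta^{n,i}$ in the $W^{-1,4}$--$W^{-1,4/3}$ duality and interpolates $\|\eta^{n,i}\|_{W^{-1,4}}\le C\|\eta^{n,i}\|_{H^{-1}}^{1/2}\|\eta^{n,i}\|_{L^2}^{1/2}$, rather than your $L^\infty$--based integration by parts, which only produces the pointwise $t^{-1/2}$ factor.

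For the forcing, your claim that $\sum_{l\le n}\tau_{l+1}^{5}t_{l+1}^{-5}\sim\tau^{4}\int_0^{t_{n+1}}s^{4\alpha-5}\,ds$ is finite is incorrect: that integral diverges at $0$ for every $\alpha<1$, and the first few terms of the discrete sum are $O(1)$ since $\tau_l\sim t_l$ there. The paper fixes this with an \emph{asymmetric} Young on the cross term $\tau_{n+1}^{3}t_{n+1}^{-5/2}\|\eta^n\|_{H^{-1}}$, splitting it into $\tau_{n+1}t_{n+1}^{-1+2\varepsilon}\|\eta^n\|_{H^{-1}}^2$ (now summable) plus $\tau_{n+1}^{5}t_{n+1}^{-4-2\varepsilon}$; the latter sums to $C\tau_{n+1}^{4}t_{n+1}^{-3-2\varepsilon}$ precisely when $\alpha>3/4+\varepsilon/2$, which is where the range $\varepsilon\in(0,2\alpha-3/2)$ comes from. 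Finally, the paper treats the first $N^*$ steps separately using only the crude $L^2$ boundedness of $\eta^{n,i}$ (inequality \eqref{estimate_negative_N*}) together with $t_n\sim\tau_n$ there, which your sketch does not account for.
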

\begin{proof}
    Multiplying $ -A_{h}^{-1} $ on both sides of the third relation in \eqref{eqn:error-eq} and testing with $ \eta^{n+1} $, we obtain
    \begin{align*}
	    & \| \eta^{n+1} \|_{H^{-1}(\Omega)}^{2}- \| \eta^{n} \|_{H^{-1}(\Omega)}^{2}\\
	    =&   2\tau_{n+1}\sum_{i=1}^{2}b_{i}(-A_{h}^{-1}\dot \eta^{n,i}, \eta^{n}) + 2\Big(A_{h}^{-1}Q_{n+1}(\partial_{t}u_{h}),\eta^{n}\Big) \\
						       & + 2\tau_{n+1}\sum_{i=1}^{2}b_{i}(A_{h}^{-1}\mathcal G^{n,i}, \eta^{n}) - \tau_{n+1}^{2}\sum_{i,j=1}^{2}b_{i}b_{j}(A_{h}^{-1}\dot \eta^{n,i}, \dot\eta^{n,j})+ \| Q_{n+1}(\partial_{t}u_h) \|_{H^{-1}(\Omega)}^{2} \\
						       & - 2\tau_{n+1}^{2}\sum_{i,j=1}^{2}b_{i}b_{j}(-A_{h}^{-1}\dot \eta^{n,i},\mathcal G^{n,i})-2\tau_{n+1}\sum_{i=1}^{2}b_{i}\Big(-A_{h}^{-1}\dot \eta^{n,i}, Q_{n+1}(\partial_{t}u_{h})\Big) \\
						       & + \tau_{n+1}^{2}\sum_{i,j=1}^{2}b_{i}b_{j}(-A_{h}^{-1}\mathcal G^{n,i}, \mathcal G^{n,j}) + 2\tau_{n+1}\sum_{i=1}^{2}b_{i}\Big(-A_{h}^{-1}\mathcal G^{n,i}, Q_{n+1}(\partial_{t}u_{h})\Big).
    \end{align*}
Next  the second relation in \eqref{eqn:error-eq} and  the algebraical stability of the Gauss--Lobatto IIIC   lead to
%
\begin{align}
	    & \| \eta^{n+1} \|_{H^{-1}(\Omega)}^{2}- \| \eta^{n} \|_{H^{-1}(\Omega)}^{2} \notag\\
	    \le & 2\tau_{n+1}\sum_{i=1}^{2}b_{i}\Big(-A_{h}^{-1}\dot \eta^{n,i}, \eta^{n,i}+\tau_{n+1}\sum_{j=1}^{2}a_{ij}\mathcal G^{n,j}+Q_{n,i}(\partial_{t}u_{h})\Big)\notag\\
						   & - 2\tau_{n+1}\sum_{i=1}^{2}b_{i}(-A_{h}^{-1}\mathcal G^{n,i}, \eta^{n}) + 2\Big(A_{h}^{-1}Q_{n+1}(\partial_{t}u_{h}),\eta^{n}\Big) + \| Q_{n+1}(\partial_{t}u_h) \|_{H^{-1}(\Omega)}^{2} \notag\\
						       & - 2\tau_{n+1}^{2}\sum_{i,j=1}^{2}b_{i}b_{j}(-A_{h}^{-1}\dot \eta^{n,i},\mathcal G^{n,j})-2\tau_{n+1}\sum_{i=1}^{2}b_{i}\Big(-A_{h}^{-1}\dot \eta^{n,i}, Q_{n+1}(\partial_{t}u_{h})\Big)\notag \\
						       & + \tau_{n+1}^{2}\sum_{i,j=1}^{2}b_{i}b_{j}(-A_{h}^{-1}\mathcal G^{n,i}, \mathcal G^{n,j}) + 2\tau_{n+1}\sum_{i=1}^{2}b_{i}\Big(-A_{h}^{-1}\mathcal G^{n,i}, Q_{n+1}(\partial_{t}u_{h})\Big).\label{inequality_Hnegative_energy_estimate}
    \end{align}
    It follows from the first relation of \eqref{eqn:error-eq} that $ -A_{h}^{-1}\dot \eta^{n,i}= - \eta^{n,i} - A_{h}^{-1}T^{n,i} $. Then we obtain
    \begin{align}
	    & 2\tau_{n+1}\sum_{i=1}^{2}b_{i}\| \eta^{n,i} \|_{L^{2}(\Omega)}^{2}+\| \eta^{n+1} \|_{H^{-1}(\Omega)}^{2}-\| \eta^{n} \|_{H^{-1}(\Omega)}^{2} \notag\\
	    \le& 2\tau_{n+1}\sum_{i=1}^{2}b_{i}(-A_{h}^{-1}T^{n,i},\eta^{n,i})-  2\tau_{n+1}\sum_{i=1}^{2}b_{i}\Big(  \eta^{n,i} + A_{h}^{-1}T^{n,i}, \tau_{n+1}\sum_{j=1}^{2}a_{ij}\mathcal G^{n,j}+Q_{n,i}(\partial_{t}u_{h})\Big)\notag\\
						   & - 2\tau_{n+1}\sum_{i=1}^{2}b_{i}(-A_{h}^{-1}\mathcal G^{n,i}, \eta^{n}) + 2\Big(A_{h}^{-1}Q_{n+1}(\partial_{t}u_{h}),\eta^{n}\Big) + \| Q_{n+1}(\partial_{t}u_h) \|_{H^{-1}(\Omega)}^{2} \notag\\
						       & + 2\tau_{n+1}^{2}\sum_{i,j=1}^{2}b_{i}b_{j}(\eta^{n,i} + A_{h}^{-1}T^{n,i},\mathcal G^{n,j})+2\tau_{n+1}\sum_{i=1}^{2}b_{i}\Big(\eta^{n,i} + A_{h}^{-1}T^{n,i}, Q_{n+1}(\partial_{t}u_{h})\Big) \notag\\
						       & + \tau_{n+1}^{2}\sum_{i,j=1}^{2}b_{i}b_{j}(-A_{h}^{-1}\mathcal G^{n,i}, \mathcal G^{n,j}) + 2\tau_{n+1}\sum_{i=1}^{2}b_{i}\Big(-A_{h}^{-1}\mathcal G^{n,i}, Q_{n+1}(\partial_{t}u_{h})\Big).\label{mid_energy_estimate_H_negative}
    \end{align}
    We estimate the terms on the right-hand side of \eqref{mid_energy_estimate_H_negative} subsequently. For $ n \ge 2 $,
	the first term can be bounded by
    \begin{align}\label{T_ni_eta_ni_negative}
	    & 2\tau_{n+1}\sum_{i=1}^{2}b_{i}(-A_{h}^{-1}T^{n,i},\eta^{n,i})\notag\\
	    \le & C \tau_{n+1}\sum_{i=1}^{2}\| \eta^{n,i} \|_{W^{-1,4}(\Omega)}\| T^{n,i} \|_{W^{-1,4/3}(\Omega)}\notag\\
	    \le & C \tau_{n+1}\sum_{i=1}^{2}\| \eta^{n,i} \|_{W^{-1,4}(\Omega)}\Big(\| P_h^{\text{RT}}\hat{I}_{h}\eta^{n,i} \otimes u_{h}^{n,i} \|_{L^{4/3}(\Omega)}+\|P_h^{\text{RT}} \hat{I}_{h}u_{h}(t_{n,i})\otimes \eta^{n,i} \|_{L^{4/3}(\Omega)}\Big)\notag\\
	    \le & C \tau_{n+1}\sum_{i=1}^{2}\| \eta^{n,i} \|_{H^{-1}(\Omega)}^{1/2}\| \eta^{n,i} \|_{L^{2}(\Omega)}^{1/2}\Big(\| \hat{I}_{h}\eta^{n,i} \|_{L^{2}(\Omega)}\| u_{h}^{n,i} \|_{L^{4}(\Omega)} \notag\\
	    & \hskip3in + \|P_{h}^{\text{RT}} \hat{I}_{h}u_{h}(t_{n,i}) \|_{L^{4}(\Omega)}\| \eta^{n,i} \|_{L^{2}(\Omega)}\Big).
    \end{align}
By testing the second relation in \eqref{eqn:error-eq} with $-A_{h}^{-1}\eta^{n,i}$ and using the first relation in  \eqref{eqn:error-eq},
we obtain 
    \begin{equation}\label{eta_ni_H_negative_estimate}
 \begin{aligned}
	    \| \eta^{n,i} \|_{H^{-1}(\Omega)}^{2} \le& C \| \eta^{n} \|_{H^{-1}(\Omega)}^{2}  + C \tau_{n+1}^{2} \sum_{j=1}^{2}\Big( \|T^{n,j}\|_{H^{-1}(\Omega)}^{2}
	+ \|\mathcal G^{n,j}\|_{H^{-1}(\Omega)}^{2}\Big) \\
	&+ C \tau_{n+1}\sum_{i=1}^{2}\| \eta^{n,j} \|_{L^{2}(\Omega)}^{2}+ C \|Q_{n,i}(\partial_{t}u_{h})\|_{H^{-1}(\Omega)}^{2}.
    \end{aligned}
        \end{equation}
    By substituting \eqref{eta_ni_H_negative_estimate} into \eqref{T_ni_eta_ni_negative} and utilizing H\"older's inequality together the imbedding $ H^{1/2}(\Omega) \hookrightarrow L^{4}(\Omega) $, we obtain
    \begin{align}\label{mid_estimate_nonlinear_error}
	& 2\tau_{n+1}\sum_{i=1}^{2}b_{i}(-A_{h}^{-1}T^{n,i},\eta^{n,i})\notag\\
	\le & \delta \tau_{n+1}\sum_{i=1}^{2}b_{i}\Big(\| \eta^{n,i} \|_{L^{2}(\Omega)}^{2} + \| \hat{I}_{h}\eta^{n,i} \|_{L^{2}(\Omega)}^{2}\Big) + C \tau_{n+1}^{3/2}\sum_{i=1}^{2}\| \eta^{n,i} \|_{L^{2}(\Omega)}^{2}\| u_{h}^{n,i} \|_{L^{4}(\Omega)}^{2}\notag\\
	    & + C_{\delta} \tau_{n+1}\sum_{i=1}^{2}\| \eta^{n} \|_{H^{-1}(\Omega)}^{2}\Big(\| u_{h}^{n,i} \|_{L^{4}(\Omega)}^{4}+\| P_{h}^{\text{RT}} \hat{I}_{h}u_{h}(t_{n,i}) \|_{L^{4}(\Omega)}^{4}\Big) \notag\\
	    & + C_{\delta} \tau_{n+1}^{5/4}\sum_{i=1}^{2}\| \eta^{n,i} \|_{L^{2}(\Omega)}^{2}\| P_{h}^{\text{RT}} \hat{I}_{h}u_{h}(t_{n,i}) \|_{L^{4}(\Omega)}+ C_{\delta} \tau_{n+1}\sum_{i=1}^{2}\| Q_{n,i}(\partial_{t}u_{h}) \|_{H^{-1}(\Omega)}^{2}\notag\\
	    & + C_{\delta} \tau_{n+1}^{3/2}\sum_{j=1}^{2}\Big( \| T^{n,j} \|_{H^{-1}(\Omega)}^{1/2} + \| \mathcal G^{n,j} \|_{H^{-1}(\Omega)}^{1/2}\Big)\notag\\
	    &\cdot\sum_{i=1}^{2}\| \eta^{n,i} \|_{L^{2}(\Omega)}^{1/2}\Big(\| \hat{I}_{h}\eta^{n,i} \|_{L^{2}(\Omega)}\| u_{h}^{n,i} \|_{L^{4}(\Omega)} + \| P_{h}^{\text{RT}} \hat{I}_{h}u_{h}(t_{n,i}) \|_{L^{4}(\Omega)}\| \eta^{n,i} \|_{L^{2}(\Omega)}\Big).
    \end{align}
By using Lemma \ref{regularity-numerical-u}, Lemma \ref{H1-estimate-numerical-solution}, the $ L^{\infty} $ estimate \eqref{linf_estimate_shalf} of $ u_h $, the $ L^{\infty} $ stability estimate \eqref{lp_estimate_RTFEM} and the interpolation inequality, we have 
    \begin{align}
	   \| T^{n,j} \|_{H^{-1}(\Omega)}  \le &  \| P_h^{\text{RT}}\hat{I}_{h}\eta^{n,j}\otimes ( \eta^{n,j} + u_h (t_{n,j})) \|_{L^{2}(\Omega)} + \| P_h^{\text{RT}} \hat{I}_{h}u_{h}(t_{n,j})\otimes \eta^{n,j} \|_{L^{2}(\Omega)}\notag\\
	    \le & C\| \hat{I}_{h}\eta^{n,j} \|_{L^{2}(\Omega)}^{1/2}\| \hat{I}_{h}\eta^{n,j} \|_{H^{1}(\Omega)}^{1/2}\| \eta^{n,j} \|_{L^{2}(\Omega)}^{1/2}\| \eta^{n,j} \|_{H^{1}(\Omega)}^{1/2}\notag\\
			& + \| \hat{I}_{h}\eta^{n,j} \|_{L^{2}(\Omega)}\| u_{h}(t_{n,j}) \|_{L^{\infty}(\Omega)}\notag\\
			&+ C\| \eta^{n,j} \|_{L^{2}(\Omega)}\Big(\| \hat{I}_{h}u_{h}(t_{n,j}) \|_{L^{\infty}(\Omega)} + \| \hat{I}_h u_h (t_{n,j}) \|_{H^{1}(\Omega)} \Big)\notag\\
	    \le & C t_{n+1}^{-1/2}\| \hat{I}_{h}\eta^{n,j} \|_{L^{2}(\Omega)}^{1/2}\| \eta^{n,j} \|_{L^{2}(\Omega)}^{1/2} + C t_{n+1}^{-1/2}\Big(\| \hat{I}_{h}\eta^{n,j} \|_{L^{2}(\Omega)}+\| \eta^{n,j} \|_{L^{2}(\Omega)}\Big),\label{T_nj_H_negative_estimate}
    \end{align}
	By substituting \eqref{T_nj_H_negative_estimate} and \eqref{G_estimate_H_negative} into \eqref{mid_estimate_nonlinear_error},  and using Corollary \ref{11regularity-numerical-u} and estimate \eqref{eqn:kutta-err}, we obtain
    \begin{align}
	& 2\tau_{n+1}\sum_{i=1}^{2}b_{i}(-A_{h}^{-1}T^{n,i},\eta^{n,i})\notag\\
	    \le &  \delta \tau_{n+1}\sum_{i=1}^{2}b_{i}\Big(\| \eta^{n,i} \|_{L^{2}(\Omega)}^{2} + \| \hat{I}_{h}\eta^{n,i} \|_{L^{2}(\Omega)}^{2}\Big) + C_{\delta} \tau_{n+1}^{3/2}t_{n+1}^{-1/2}\sum_{i=1}^{2}\Big(\| \eta^{n,i} \|_{L^{2}(\Omega)}^{2}+\| \hat{I}_{h}\eta^{n,i} \|_{L^{2}(\Omega)}^{2}\Big)\notag\\
		& + C_{\delta} \tau_{n+1}\sum_{i=1}^{2}\| \eta^{n} \|_{H^{-1}(\Omega)}^{2}\Big(\| u_{h}^{n,i} \|_{H^{1}(\Omega)}^{2}+\| \hat{I}_{h}u_{h}(t_{n,i}) \|_{H^{1}(\Omega)}^{2}\Big) \notag\\
		& + C_{\delta} \tau_{n+1}^{5/4}t_{n+1}^{-1/4}\sum_{i=1}^{2}\| \eta^{n,i} \|_{L^{2}(\Omega)}^{2} + C_{\delta} \tau_{n+1}^{5}t_{n+1}^{-3} + C_{\delta} \tau_{n+1}^{7}t_{n+1}^{-6}.\label{Hnegative_estimate_nonlinear_eta}
    \end{align}
    Substituting \eqref{Hnegative_estimate_nonlinear_eta} into \eqref{mid_energy_estimate_H_negative} together with estimates \eqref{eqn:kutta-err}, \eqref{eqn:kutta-err}, \eqref{G_estimate_H_negative}, \eqref{G_estimate_L2} and \eqref{T_nj_H_negative_estimate}, we obtain
    \begin{align}
        & 2\tau_{n+1}\sum_{i=1}^{2}b_{i}\| \eta^{n,i} \|_{L^{2}(\Omega)}^{2}+\| \eta^{n+1} \|_{H^{-1}(\Omega)}^{2}-\| \eta^{n} \|_{H^{-1}(\Omega)}^{2} \notag\\
 \le &  \delta \tau_{n+1}\sum_{i=1}^{2}b_{i}\Big(\| \eta^{n,i} \|_{L^{2}(\Omega)}^{2} + \| \hat{I}_{h}\eta^{n,i} \|_{L^{2}(\Omega)}^{2}\Big) + C_{\delta} \tau_{n+1}^{3/2}t_{n+1}^{-1/2}\sum_{i=1}^{2}\Big(\| \eta^{n,i} \|_{L^{2}(\Omega)}^{2}+\| \hat{I}_{h}\eta^{n,i} \|_{L^{2}(\Omega)}^{2}\Big)\notag\\
		& + C_{\delta} \tau_{n+1}\sum_{i=1}^{2}\| \eta^{n} \|_{H^{-1}(\Omega)}^{2}\Big(\| u_{h}^{n,i} \|_{H^{1}(\Omega)}^{2}+\| \hat{I}_{h}u_{h}(t_{n,i}) \|_{H^{1}(\Omega)}^{2}\Big)+ C_{\delta} \tau_{n+1}^{5/4}t_{n+1}^{-1/4}\sum_{i=1}^{2}\| \eta^{n,i} \|_{L^{2}(\Omega)}^{2} \notag\\
		& + C_{\delta} \tau_{n+1}^{3}t_{n+1}^{-5/2}\| \eta^{n} \|_{H^{-1}(\Omega)} + C \tau_{n+1}^{5}t_{n+1}^{-3} + C_{\delta} \tau_{n+1}^{6}t_{n+1}^{-5} + C \tau_{n+1}^{7}t_{n+1}^{-6}   \label{energy_estimate_H_negative_eta_1}\\
\le &  \delta\tau_{n+1}\sum_{i=1}^{2}b_{i}\Big(\| \eta^{n,i} \|_{L^{2}(\Omega)}^{2} + \| \hat{I}_{h}\eta^{n,i} \|_{L^{2}(\Omega)}^{2}\Big) + C_{\delta} \tau_{n+1}^{3/2}t_{n+1}^{-1/2}\sum_{i=1}^{2}\Big(\| \eta^{n,i} \|_{L^{2}(\Omega)}^{2}+\| \hat{I}_{h}\eta^{n,i} \|_{L^{2}(\Omega)}^{2}\Big)\notag\\
		& + C_{\delta} \tau_{n+1}\sum_{i=1}^{2}\| \eta^{n} \|_{H^{-1}(\Omega)}^{2}\Big(\| u_{h}^{n,i} \|_{H^{1}(\Omega)}^{2}+\| \hat{I}_{h}u_{h}(t_{n,i}) \|_{H^{1}(\Omega)}^{2}\Big)+ C_{\delta} \tau_{n+1}^{5/4}t_{n+1}^{-1/4}\sum_{i=1}^{2}\| \eta^{n,i} \|_{L^{2}(\Omega)}^{2} \notag\\
		& + C_{\delta} \tau_{n+1}t_{n+1}^{-1+2\varepsilon}\| \eta^{n} \|_{H^{-1}(\Omega)}^{2} + C_{\delta} \tau_{n+1}^{5}t_{n+1}^{-4-2\varepsilon},\label{energy_estimate_H_negative_eta}
    \end{align}
    where $ \varepsilon \in (0,2 \alpha - 3/2) $.
    When $ \delta $ is sufficiently small, $ n $ is larger than a fixed integer $ N^* $ such that $ \tau_{n+1}^{1/4}t_{n+1}^{-1/4} $ and $ \tau_{n+1}^{1/2}t_{n+1}^{-1/2} $ are sufficiently small. Then the corresponding terms on the right-hand side can be absorbed by the left-hand side. Summing up \eqref{energy_estimate_H_negative_eta} for $n \ge N^*$ and utilizing Gronwall's inequality give that
    \begin{align}
&\sum_{l=N^*}^{n}\tau_{l+1}\sum_{i=1}^{2}b_{i}\| \eta^{l,i} \|_{L^{2}(\Omega)}^{2}+\| \eta^{n+1} \|_{H^{-1}(\Omega)}^{2}\notag\\
	    \le & C \| \eta^{N^*} \|_{H^{-1}(\Omega)}^{2} + C \tau_{N^*}\sum_{i=1}^{2}\| \hat{I}_{h}\eta^{N^*,i} \|_{L^{2}(\Omega)}^{2} + C \sum_{l = N^*}^n \tau_{l+1}^5 t_{l+1}^{-4-2\varepsilon} . \label{estimate_negative_largeN_1}
    \end{align}
	Since $ \alpha \in (3 / 4, 1) $, we can deduce the following inequality by utilizing \eqref{time1}
	\begin{align}\label{taul1_tl1_sum}
	    \sum_{l=0}^n \tau_{l+1}^5 t_{l+1}^{-4-2\varepsilon} \le C \tau^4 \sum_{l = N^*}^n \tau_{l+1} t_{l+1}^{4\alpha -4 - 2\varepsilon} \le C \tau^4 t_{n+1}^{4\alpha - 3 - 2 \varepsilon} = C \tau_{n+1}^4 t_{n+1}^{-3-2\varepsilon}.
	\end{align}
	Substituting \eqref{taul1_tl1_sum} into \eqref{estimate_negative_largeN_1}, we obtain
	\begin{align}
	    &\sum_{l=N^*}^{n}\tau_{l+1}\sum_{i=1}^{2}b_{i}\| \eta^{l,i} \|_{L^{2}(\Omega)}^{2}+\| \eta^{n+1} \|_{H^{-1}(\Omega)}^{2}\notag\\
	    \le & C \| \eta^{N^*} \|_{H^{-1}(\Omega)}^{2} + C \tau_{N^*}\sum_{i=1}^{2}\| \hat{I}_{h}\eta^{N^*,i} \|_{L^{2}(\Omega)}^{2}  + C \tau_{n+1}^{4}t_{n+1}^{-3-2\varepsilon}. \label{estimate_negative_largeN}
	\end{align}

 To finalize the proof, we need to estimate the error $ \| \eta^n \|_{H^{-1}(\Omega)} $ for $ 1 \le n < N^* $. The inequality \eqref{energy_estimate_H_negative_eta_1} is valid for $ 0 \le n < N^* $. Summing up \eqref{energy_estimate_H_negative_eta_1} with respect to $ 0 \le n < N^* $, the following inequality is then followed from the $ L^{2}(\Omega) $ boundedness of $ \eta^{n,i} $ and $ \eta^{n} $
		\begin{align}
			& 2\sum_{l = 0}^{n} \tau_{l+1} \sum_{i=1}^{2} b_{i} \| \eta^{l,i} \|_{L^{2}(\Omega)}^{2} + \| \eta^{n+1} \|_{H^{-1}(\Omega)}^{2} \notag \\
			\le & C N^*\tau_{n+1} + C \sum_{l=0}^n \tau_{l+1}\sum_{i=1}^{2} \| \eta^l \|_{H^{-1}(\Omega)}^{2}\Big(\| u_h^{l,i} \|_{H^{1}(\Omega)}^{2} + \| \hat{I}_h u_h (t_{l,i}) \|_{H^{1}(\Omega)}^{2}\Big)\notag\\
				& + C \sum_{l=1}^{n} \| \eta^l \|_{H^{-1}(\Omega)} \quad \text{for}\quad 1 \le n < N^*.
		\end{align}
		By using \eqref{taul1_tl1_sum} and applying discrete Gronwall inequality, we obtain the result for $ 1 \le n < N^* $
		\begin{align}\label{estimate_negative_N*}
		   2 \sum_{l=0}^{n}\tau_{l+1}\sum_{i=1}^{2}b_{i}\| \eta^{l,i} \|_{L^{2}(\Omega)}^{2} + \| \eta^{n+1} \|_{H^{-1}(\Omega)}^2 \le C \tau_{n+1} ,
		\end{align}
		where the constant $ C $ is dependent on the fixed constant $ N^* $, but is independent on $ n $.
By using the property $ (2) $ specified at the beginning of section \ref{section_3_1} and choosing $ M_0 = N^* $, we have that
		\begin{align}\label{estimate_tau_smallN}
		    \tau_1 \sim \tau_2 \sim \cdots  \sim \tau_{N^*} \sim \tau^{\frac{1}{1-\alpha}},
		\end{align}
		when $ \alpha \in (\frac{3}{4},1) $ and the equivalence depends on $ N^* $. Then for $ 1 \le n \le N^* $, $ \tau_n \le t_n \le N^* \tau_n $, which implies $ t_n \sim \tau_n $. Therefore, we have
		\begin{align}\label{taun1leCtau_n1}
		    \tau_{n+1} \sim \tau_{n+1}^4 t_{n+1}^{-3 } \le C \tau^4 \tau_{n+1}^{-3 - 2\varepsilon } \quad \text{for}\quad 0 \le n \le N^* - 1.
		\end{align}
		Substituting \eqref{taun1leCtau_n1} into \eqref{estimate_negative_N*}, we obtain the desired result for $ 1 \le n < N^* $ such that
		\begin{align}\label{etali_0nN_estimate_negative}
		    2 \sum_{l=0}^{n}\tau_{l+1}\sum_{i=1}^{2}b_{i}\| \eta^{l,i} \|_{L^{2}(\Omega)}^{2} + \| \eta^{n+1} \|_{H^{-1}(\Omega)}^2 \le C \tau_{n+1}^4 t_{n+1}^{-3 - 2\varepsilon}.
		\end{align}

		Then we can substitute \eqref{estimate_negative_N*} into \eqref{estimate_negative_largeN} and obtain the following inequality for $ n \ge N^* $ by using the $ L^{2} $ boundedness of $ \eta^{n,i} $ and $ \eta^n $
		\begin{align}\label{estimate_Hminus1_largeN}
		    \sum_{l=N^*}^{n}\tau_{l+1}\sum_{i=1}^{2}b_{i}\| \eta^{l,i} \|_{L^{2}(\Omega)}^{2}+\| \eta^{n+1} \|_{H^{-1}(\Omega)}^{2} \le C \tau_{N^*} + C \tau_{n+1}^4 t_{n+1}^{-3-2\varepsilon}.
		\end{align}
		By adding up \eqref{estimate_Hminus1_largeN} and the inequality \eqref{estimate_negative_N*}, we have
		\begin{align}\label{taul1bietali_etan1_tauN}
		    \sum_{l=0}^{n}\tau_{l+1}\sum_{i=1}^{2}b_{i}\| \eta^{l,i} \|_{L^{2}(\Omega)}^{2}+\| \eta^{n+1} \|_{H^{-1}(\Omega)}^{2} \le C \tau_{N^*} + C \tau_{n+1}^4 t_{n+1}^{-3-2\varepsilon}, \quad \text{for} \quad  n \ge N^*.
		\end{align}
		Again, by using the equivalence \eqref{estimate_tau_smallN}, we can derive that
		\begin{align}\label{tau_N_estimate}
		    \tau_{N^*} \sim \tau^{\frac{1}{1-\alpha}} =  \tau^4 \tau^{\frac{4\alpha-3}{1-\alpha}} \sim \tau^4 \tau_1^{4\alpha-3} \le  \tau^4 t_{n+1}^{4\alpha-3} \sim \tau_{n+1}^4 t_{n+1}^{-3} \le C \tau_{n+1}^4 t_{n+1}^{-3 - 2\varepsilon}.
		\end{align}
		 Combining \eqref{etali_0nN_estimate_negative} and \eqref{taul1bietali_etan1_tauN}, \eqref{tau_N_estimate}, we complete the proof.
\end{proof}

Using the $ H^{-1}(\Omega) $ estimate of the errors proved in Lemma  \ref{H_negative_estimate_error}, we can derive the $ L^{2}(\Omega) $ error bound, which is present in the following theorem.
\begin{theorem}
\label{thfull}
    Under the same conditions of $ \alpha $ and $ \varepsilon $ as Lemma  \ref{H_negative_estimate_error}, the error  $ \eta^{n+1}:= u_h^{n+1}  - u_h(t_{n+1}) $ satisfies the following error bound
    \begin{align}
        \| \eta^{n+1} \|_{L^{2}(\Omega)} \le C t_{n+1}^{-2-\varepsilon}\tau_{n+1}^{2}.\label{L2_error_estimate}
    \end{align}
\end{theorem}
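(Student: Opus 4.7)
The plan is to bootstrap from the $H^{-1}$ bound in Lemma \ref{H_negative_estimate_error} to the claimed $L^2$ rate by performing an $L^2$ energy identity for the recursion \eqref{eqn:error-eq}, mirroring the structure of the proof of that lemma but without the $-A_h^{-1}$ weight. Squaring the third relation in \eqref{eqn:error-eq}, substituting $\eta^n=\eta^{n,i}-\tau_{n+1}\sum_j a_{ij}\dot\eta^{n,j}+\tau_{n+1}\sum_j a_{ij}\mathcal G^{n,j}+Q_{n,i}(\partial_t u_h)$ from the internal-stage relation into the cross term $2\tau_{n+1}\sum_i b_i(\dot\eta^{n,i},\eta^n)$, and invoking the algebraic stability of Lobatto IIIC to discard the $\tau_{n+1}^2(d_{ij}\dot\eta^{n,i},\dot\eta^{n,j})$ contribution, should give
\begin{align*}
\|\eta^{n+1}\|_{L^2(\Omega)}^2-\|\eta^n\|_{L^2(\Omega)}^2+2\tau_{n+1}\sum_{i=1}^2 b_i\|\nabla\eta^{n,i}\|_{L^2(\Omega)}^2 \le 2\tau_{n+1}\sum_{i=1}^2 b_i(T^{n,i},\eta^{n,i})+\mathcal R^n,
\end{align*}
where $\mathcal R^n$ collects the cross pairings of $\mathcal G^{n,i}$, $Q_{n,i}$, $Q_{n+1}$ with $\eta^n,\eta^{n,i},\dot\eta^{n,i}$ and is controllable via \eqref{eqn:kutta-err} and \eqref{G_estimate_L2}--\eqref{G_estimate_H_negative}, together with the $H^{-1}$ bound on $\eta^n$ already supplied by Lemma \ref{H_negative_estimate_error}.

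For the nonlinear pairing $(T^{n,i},\eta^{n,i})$ I would integrate by parts to transfer the gradient off $\eta^{n,i}$, split $T^{n,i}$ as in \eqref{T_nj_H_negative_estimate} into a piece involving $P_h^{\text{RT}}\hat I_h\eta^{n,i}\otimes u_h^{n,i}$ and a piece involving $P_h^{\text{RT}}\hat I_h u_h(t_{n,i})\otimes\eta^{n,i}$, and apply Ladyzhenskaya-type inequalities together with the $L^\infty$ bound \eqref{linf_estimate_shalf}, the $H^1$ bound of Lemma \ref{H1-estimate-numerical-solution}, Lemma \ref{regularity-numerical-u}, and the $L^p$ stability \eqref{lp_estimate_RTFEM} of $P_h^{\text{RT}}$. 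The crucial accounting is to arrange the splitting so that every surviving $\eta$-factor is routed either through the coercive $\tau_{n+1}b_i\|\nabla\eta^{n,i}\|_{L^2}^2$ term (for absorption) or through the summed $L^2$-in-time norm $\bigl(\sum_l\tau_{l+1}b_i\|\eta^{l,i}\|_{L^2}^2\bigr)^{1/2}$ already controlled by Lemma \ref{H_negative_estimate_error}; in this way one avoids any circular appearance of $\|\eta^{n+1}\|_{L^2}$ on the right-hand side.

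Multiplying the resulting inequality by the target weight $t_{n+1}^{4+2\varepsilon}$, summing over $n\ge N^*$, and using the graded-mesh estimate $\sum_l\tau_{l+1}^5 t_{l+1}^{-4-2\varepsilon}\le C\tau_{n+1}^4 t_{n+1}^{-3-2\varepsilon}$ (the analogue of \eqref{taul1_tl1_sum} with the exponent shifted by one, which still converges because $\alpha>3/4$ forces $4\alpha-3-2\varepsilon>0$ for sufficiently small $\varepsilon$), followed by a discrete Gronwall inequality, should yield $\|\eta^{n+1}\|_{L^2(\Omega)}^2\le C\tau_{n+1}^4 t_{n+1}^{-4-2\varepsilon}$ for $n\ge N^*$. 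The range $1\le n<N^*$ is handled by a crude $L^2$-energy estimate combined with the stepsize equivalence \eqref{estimate_tau_smallN}, in the same spirit as the argument producing \eqref{etali_0nN_estimate_negative}. The principal obstacle is the nonlinear pairing $(T^{n,i},\eta^{n,i})$: without the $-A_h^{-1}$ smoothing that was available in Lemma \ref{H_negative_estimate_error}, one must carefully route the singular $u_h$ factors (which carry $t^{-1/2}$ both in $H^1$ and $L^\infty$) against the already-summed $\eta^{l,i}$ rather than against the pointwise quantity $\|\eta^n\|_{L^2}$ we are trying to control, which is precisely where the $H^{-1}$-in-time and $L^2$-in-summed-time bounds of Lemma \ref{H_negative_estimate_error} become indispensable.
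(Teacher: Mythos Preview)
Your proposal is correct and follows essentially the same route as the paper: derive an $L^2$ energy inequality from the error recursion \eqref{eqn:error-eq} via algebraic stability, pair $T^{n,i}$ against $\eta^{n,i}$ through $H^{-1}$--$H^1$ duality so that the resulting $\|\nabla\eta^{n,i}\|$ factor is absorbed by the coercive term, and close using the summed bound $\sum_l\tau_{l+1}\sum_i b_i\|\eta^{l,i}\|_{L^2}^2\le C\tau_{n+1}^4 t_{n+1}^{-3-2\varepsilon}$ from Lemma \ref{H_negative_estimate_error}.

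Two cosmetic differences are worth noting. First, the paper multiplies the one-step inequality by the single weight $t_{n+1}$ rather than your $t_{n+1}^{4+2\varepsilon}$; this converts the dangerous $\tau_{n+1}t_{n+1}^{-1}\|\eta\|_{L^2}^2$ term directly into $\tau_{n+1}\|\eta\|_{L^2}^2$, which is exactly the summed quantity already supplied by Lemma \ref{H_negative_estimate_error}, and the telescoping commutator $(t_{n+1}-t_n)\|\eta^n\|_{L^2}^2=\tau_{n+1}\|\eta^n\|_{L^2}^2$ falls into the same bucket. Second, because every $\eta$-factor on the right (including $\eta^n=\eta^{n-1,2}$ and $\hat I_h\eta^{n,i}$, both of which are built from previous stage values) is routed through that summed bound, no Gronwall step is actually needed; the paper simply sums and reads off the result. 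Your mention of Gronwall is harmless but unnecessary, and the graded-mesh sum you quote is the same one as \eqref{taul1_tl1_sum} rather than a shifted analogue.
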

\begin{proof}
Let $N^*$ be the fixed integer defined in  Lemma  \ref{H_negative_estimate_error}. Then for
$ n < N^* $, \eqref{L2_error_estimate} follows directly from the $ L^{2}(\Omega) $ boundedness of $ u_{h}^{n+1} $ and $ u_h (t_{n+1}) $.
And it suffices to prove the desired result for $ n \ge N^* $.

Squaring the third relation \eqref{eqn:error-eq}  on both sides, we obtain
    \begin{align}
	    \| \eta^{n+1} \|_{L^{2}(\Omega)}^{2} = & \| \eta^{n} \|_{L^{2}(\Omega)}^{2} + 2\tau_{n+1}\sum_{i=1}^{2}b_{i}(\dot \eta^{n,i}, \eta^{n}) - 2\tau_{n+1}\sum_{i=1}^2 b_i (\eta^n, \mathcal G^{n,i}) - 2\Big(\eta^n, Q_{n+1}(\partial_t u_h)\Big) \notag\\
	    & + \tau_{n+1}^{2}\sum_{i,j=1}^{2}b_{i}b_{j}(\dot \eta^{n,i}, \dot \eta^{n,j}) - 2\tau_{n+1}^{2}\sum_{i,j=1}^{2}b_{i}b_{j}(\dot \eta^{n,i}, \mathcal G^{n,j}) \notag\\
	    & - 2\tau_{n+1}\sum_{i=1}^{2}b_{i} \Big(\dot \eta^{n,i}, Q_{n+1}(\partial_{t}u_{h})\Big) + \tau_{n+1}^{2} \sum_{i,j=1}^{2}b_{i}b_{j}(\mathcal G^{n,i}, \mathcal G^{n,j}) \notag\\
	    & + 2\tau_{n+1}\sum_{i=1}^{2}b_{i}\Big(\mathcal G^{n,i}, Q_{n+1}(\partial_{t}u_{h})\Big) + \| Q_{n+1}(\partial_{t}u_{h}) \|_{L^{2}(\Omega)}^{2}.
    \end{align}
    Similarly to the deduction of \eqref{inequality_Hnegative_energy_estimate}, by representing $ \eta^{n} $ using the second relation in \eqref{eqn:error-eq} and utilizing the algebraical stability of Gauss--Lobatto IIIC, we obtain
    \begin{align}
        \| \eta^{n+1} \|_{L^{2}(\Omega)}^{2} \le & \| \eta^{n} \|_{L^{2}(\Omega)}^{2} + 2\tau_{n+1}\sum_{i=1}^{2}b_{i}\Big(\dot \eta^{n,i}, \eta^{n,i} + \tau_{n+1}\sum_{j=1}^{2}a_{ij}\mathcal G^{n,j}+Q_{n,i}(\partial_{t}u_h)\Big) \notag\\
					       & - 2\tau_{n+1}\sum_{i=1}^2 b_i (\eta^n, \mathcal G^{n,i}) - 2\Big(\eta^n, Q_{n+1}(\partial_t u_h)\Big) + \tau_{n+1}^{2} \sum_{i,j=1}^{2}b_{i}b_{j}(\mathcal G^{n,i}, \mathcal G^{n,j}) \notag\\
	    & - 2\tau_{n+1}\sum_{i=1}^{2}b_{i} \Big(\dot\eta^{n,i}, Q_{n+1}(\partial_{t}u_{h})\Big) + \| Q_{n+1}(\partial_{t}u_{h}) \|_{L^{2}(\Omega)}^{2} \notag\\
	    & + 2\tau_{n+1}\sum_{i=1}^{2}b_{i}\Big(\mathcal G^{n,i}, Q_{n+1}(\partial_{t}u_{h})\Big)- 2\tau_{n+1}^{2}\sum_{i,j=1}^{2}b_{i}b_{j}(\dot \eta^{n,i}, \mathcal G^{n,j}) .
    \end{align}
    By the first relation in \eqref{eqn:error-eq}, we have
    \begin{equation} \label{L2_energy_estimate_error}
    \begin{aligned}
	& 2\tau_{n+1}\sum_{i=1}^{2}b_{i}\| \nabla\eta^{n,i} \|_{L^{2}(\Omega)}^{2} + \| \eta^{n+1} \|_{L^{2}(\Omega)}^{2} - \|\eta^{n} \|_{L^{2}(\Omega)}^{2} \\
	\le & C \tau_{n+1}^{2}\sum_{i,j=1}^{2}\| \dot \eta^{n,i} \|_{L^{2}(\Omega)}\| \mathcal G^{n,j} \|_{L^{2}(\Omega)} + \| Q_{n+1}(\partial_{t}u_{h}) \|_{L^{2}(\Omega)}^{2}\\
		& \;+ C  \tau_{n+1}\sum_{i=1}^{2}\|T^{n,i}\|_{H^{-1}(\Omega)}\| \eta^{n,i}\|_{H^{1}(\Omega)} + C \tau_{n+1}\sum_{i=1}^{2}\| \eta^{n} \|_{L^{2}(\Omega)} \| \mathcal G^{n,i} \|_{L^{2}(\Omega)}\\
		& \;+ C\|\eta^n\|_{L^{2}(\Omega)}\| Q_{n+1}(\partial_t u_h)\|_{L^{2}(\Omega)} + C\tau_{n+1}^{2} \sum_{i=1}^{2}\|\mathcal G^{n,i}\|_{L^{2}(\Omega)}^{2} \\
		& \;+C  \tau_{n+1}\sum_{i=1}^{2} \Big(\| \eta^{n,i}\|_{H^{1}(\Omega)}+ \| T^{n,i} \|_{H^{-1}(\Omega)}\Big)\Big(\| Q_{n,i}(\partial_{t}u_h)\|_{H^{1}(\Omega)} + \| Q_{n+1}(\partial_{t}u_{h})\|_{H^{1}(\Omega)}\Big) 
    \end{aligned}
    \end{equation}
    Suppose $ \mathcal O^{-1} = (r_{ij}) $. By the second relation in \eqref{eqn:error-eq}, 
    we have
    \begin{align*}
	    \dot \eta^{n,i} = \tau_{n+1}^{-1}\sum_{j=1}^{2}r_{ij}(\eta^{n,j} - \eta^{n})+\mathcal G^{n,i} + \tau_{n+1}^{-1}\sum_{j=1}^{2}r_{ij}Q_{n,j}(\partial_{t}u_{h}).
    \end{align*}
    Hence, we can derive the estimate of $ \| \dot \eta^{n,i} \|_{L^{2}(\Omega)} $ such that
    \begin{align*}
        \| \dot \eta^{n,i} \|_{L^{2}(\Omega)} \le C \tau_{n+1}^{-1}\sum_{j=1}^{2}\Big(\| \eta^{n,j} \|_{L^{2}(\Omega)} +\| \eta^{n} \|_{L^{2}(\Omega)} + \| Q_{n,j}(\partial_{t}u_{h}) \|_{L^{2}(\Omega)}\Big) + \| \mathcal G^{n,i} \|_{L^{2}(\Omega)}.
    \end{align*}
Substituting this estimate into \eqref{L2_energy_estimate_error}, we have
\begin{align}
     &2 \tau_{n+1}\sum_{i=1}^{2}b_{i}\| \nabla\eta^{n,i} \|_{L^{2}(\Omega)}^{2} + \| \eta^{n+1} \|_{L^{2}(\Omega)}^{2} - \|\eta^{n} \|_{L^{2}(\Omega)}^{2} \notag\\
	\le & \delta \tau_{n+1}\sum_{i=1}^{2}b_{i}\| \eta^{n,i} \|_{H^{1}(\Omega)}^{2} + C \tau_{n+1} t_{n+1}^{-1} \Big( \| \eta^{n} \|_{L^{2}(\Omega)}^{2} + \sum_{i=1}^{2}\| \eta^{n,i} \|_{L^{2}(\Omega)}^{2}
 \Big)  \notag\\
					       & + C\| Q_{n+1}( \partial_{t}u_{h}) \|_{L^{2}(\Omega)}^2 + C \delta^{-1}\tau_{n+1}\sum_{i=1}^{2} \Big(\| Q_{n,i}(\partial_{t}u_h)\|_{H^{1}(\Omega)} + \| Q_{n+1}(\partial_{t}u_{h})\|_{H^{1}(\Omega)}\Big)^{2} \notag\\
					       & + C \tau_{n+1}t_{n+1}\sum_{i=1}^{2}\| \mathcal G^{n,i} \|_{L^{2}(\Omega)}^{2}+ C \delta^{-1}\tau_{n+1}\sum_{i=1}^{2}\|T^{n,i}\|_{H^{-1}(\Omega)}^{2} .
\end{align}
By using estimates \eqref{T_nj_H_negative_estimate}, \eqref{eqn:kutta-err},  \eqref{G_estimate_H_negative} and \eqref{G_estimate_L2}, when $ \delta $ is sufficiently small, we obtain
\begin{align}
    & \tau_{n+1}\sum_{i=1}^{2}b_{i}\| \nabla\eta^{n,i} \|_{L^{2}(\Omega)}^{2} + \| \eta^{n+1} \|_{L^{2}(\Omega)}^{2} - \|\eta^{n} \|_{L^{2}(\Omega)}^{2}\notag\\
	\le & C \tau_{n+1}t_{n+1}^{-1}\Big(\| \eta^{n} \|_{L^{2}(\Omega)}^{2} + \sum_{i=1}^{2}\| \eta^{n,i} \|_{L^{2}(\Omega)}^{2} + \sum_{i=1}^{2}\| \hat{I}_{h}\eta^{n,i} \|_{L^{2}(\Omega)}^{2}\Big) + C \tau_{n+1}^{5}t_{n+1}^{-5}.\label{L2_energy_estimate_error_eta_middle}
\end{align}
 Multiplying $ t_{n+1} $ on both sides of \eqref{L2_energy_estimate_error_eta_middle}, we have
\begin{align*}
	& t_{n+1} \tau_{n+1} \sum_{i=1}^2 b_i \| \nabla \eta^{n,i} \|_{L^{2}(\Omega)}^{2} + t_{n+1} \| \eta^{n+1} \|_{L^{2}(\Omega)}^{2} - t_n \| \eta^n \|_{L^{2}(\Omega)}^{2} \\
	\le & \tau_{n+1}\| \eta^n \|_{L^{2}(\Omega)}^{2} + C \tau_{n+1} \Big(\| \eta^n \|_{L^{2}(\Omega)}^{2} + \sum_{i=1}^2 \| \eta^{n,i} \|_{L^{2}(\Omega)}^{2} + \sum_{i=1}^2 \| \hat{I}_h \eta^{n,i} \|_{L^{2}(\Omega)}^{2}\Big) + C \tau_{n+1}^5 t_{n+1}^{-4}.
\end{align*}
By summing up the above inequality with respect to $ n $ from $ 0 $ to $ m $ and following the proof similarly to \eqref{taul1_tl1_sum}, we have
\begin{align*}
	& \sum_{n=0}^m t_{n+1}\tau_{n+1} \sum_{i=1}^2 b_i \| \nabla \eta^{n,i} \|_{L^{2}(\Omega)}^{2} + t_{m+1}\| \eta^{m+1} \|_{L^{2}(\Omega)}^{2} \\
	\le & C \sum_{n= 0}^m \tau_{n+1}\Big(\| \eta^n \|_{L^{2}(\Omega)}^{2} + \sum_{i=1}^2 \| \eta^{n,i} \|_{L^{2}(\Omega)}^{2} + \sum_{i=1}^2 \| \hat{I}_h \eta^{n,i} \|_{L^{2}(\Omega)}^{2} \Big) + C \tau_{m+1}^4 t_{m+1}^{-3}.
\end{align*}
Then the desired result follows from Lemma  \ref{H_negative_estimate_error}.
\end{proof}

\section{Numerical examples}%
\label{sec:numerics}
In this section we present numerical examples to support the theoretical results in Theorem \ref{thsemi} and Theorem \ref{thfull}. All examples concern the incompressible NS problem in \eqref{system-1}.

\begin{Example}[The merging of two co-rotating vortices]\upshape
	In this example, we investigate the simulation of the merging of two co-rotating Lamb-Oseen vortices within a two-dimensional domain $ \Omega = (-\pi,\pi)\times (-\pi,\pi) $. The initial value of the standard Lamb-Oseen vortex \cite{ferreira2005,Josserand2007,mao2012,orlandi2007} is inherently a function in $ L^{p}(\Omega)^{2} $ for $ p < 2 $. To ensure that the initial value belongs to $ L^{2}(\Omega)^{2} $ but not to $ H^{\varepsilon}(\Omega)^{2} $, we make a slight modification to the data by selecting the initial value $ u_{0} = u_{1} + u_{2} $ and
    \begin{align*}
	    u_{1} = \Big(-\frac{ y\Gamma}{2\pi r_{1}^{2-\varepsilon}}, \frac{ (x+0.5)\Gamma}{2\pi r_{1}^{2-\varepsilon}}\Big),\quad
	    u_{2} = \Big(-\frac{ y\Gamma}{2 \pi r_{2}^{2-\varepsilon}}, \frac{ (x-0.5)\Gamma}{2\pi r_{2}^{2-\varepsilon}} \Big),
    \end{align*}
    with $ r_{1} = \sqrt{(x+0.5)^{2} + y^{2}} $, $ r_{2} = \sqrt{(x - 0.5)^{2} + y^{2}} $, and $ \varepsilon = 0.1 $. Here, $ \Gamma $ denotes the circulation, set to $ 2\pi $ for this test. The viscosity $ \nu $ is chosen as $ 0.1 $. We choose the domain $ \Omega $ so large that we may assume that $ u $ satisfies $ 0 $ Dirichlet boundary condition. 

We test temporal convergence at $ T = 0.1 $ using graded stepsizes \eqref{time1} with $ \alpha = 0$ (uniform) and $\alpha = 0.76$. The reference solution $u^N_{h,\rm ref}$ is computed with $\tau= 1/1024$. Temporal errors $\|u^N_h-u^N_{h,\rm ref}\|_{L^2(\Omega)}$ in Figure \ref{figure_vort} (a) for $\tau = 1/32, 1/64, 1 / 128,1/256$ (spatial errors negligible for sufficiently small $ h $) show second-order convergence for $\alpha = 0.76$ (consistent with Theorem \ref{thfull}) but irregular convergence for $\alpha = 0$, justifying the necessity of graded stepsizes in \eqref{time1}.


In Figure \ref{figure_vort} (b), we present the spatial discretization errors $\|u^N_h-u^N_{h,\rm ref}\|_{L^2(\Omega)}$ and convergence rates for mesh sizes $ h = \pi / 8, \pi/16, \pi / 32, \pi /64 $ with a sufficiently small temporal stepsize that ensures the errors from temporal discretization are negligible.
The reference solution $u^N_{h,\rm ref}$ is chosen to be the numerical solution with mesh size $h= \pi/128$. We use $ P_{2} $--$ P_1 $ Taylor--Hood elements and observe that the convergence in space is second order. This aligns with the theoretical result proved in Theorem \ref{thsemi} and shows the sharpness of the convergence rate in space.

\begin{figure}[htp]
    \centering
    \subfigure[$ L^2 $ error of $ u $ from temporal discretization]{\includegraphics[trim = .1cm .1cm .1cm .1cm, clip=true,width=0.4\textwidth,height=0.35\textwidth]{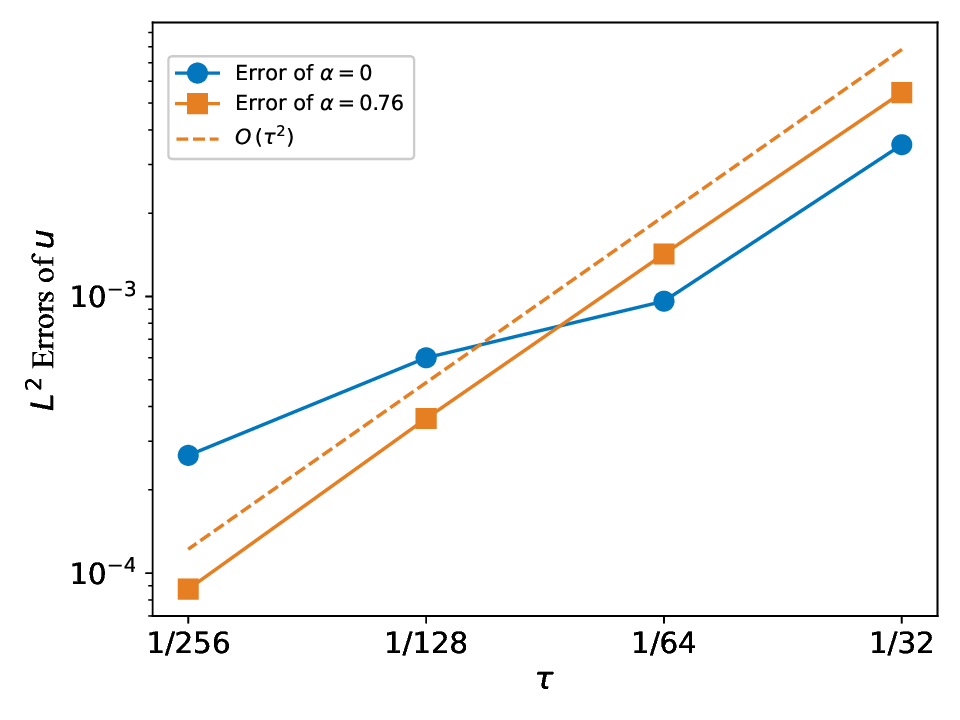}}
\qquad\quad
    \subfigure[$ L^2 $ error of $ u $ from spatial discretization]{\includegraphics[trim = .1cm .1cm .1cm .1cm, clip=true,width=0.4\textwidth,height=0.35\textwidth]{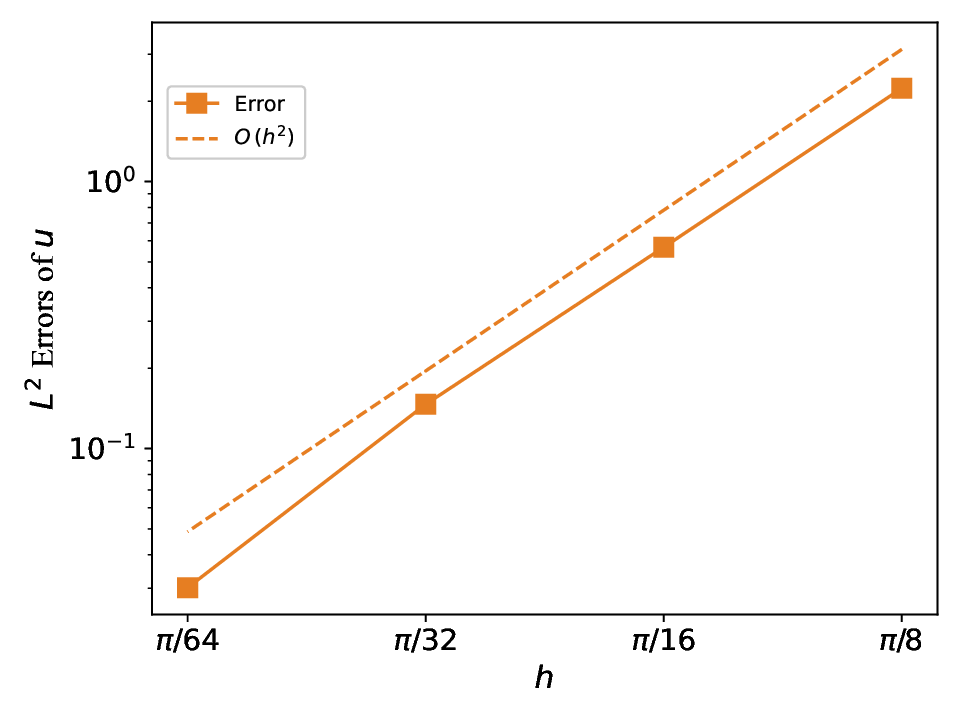}}
    \caption{$ L^{2} $ errors of $ u $
}\label{figure_vort}
\end{figure}

The evolution of the velocity $ u $ for the co-rotating vortices is illustrated at various time instances, specifically at $ t = 0.1, 0.3, 0.5, 0.7, 1.0, 2.0 $. These visualizations are depicted in Figure \ref{figure1} (a)--(f) with mesh size $ h = \pi/50 $ and time stepsize $ \tau = 0.005 $. The parameter $ \alpha $ is  chosen as $ 0.76 $. The numerical simulation indicates a gradual merging of the two co-rotating vortices over time. Notably, at $ t = 2.0 $, the vortices have completely merged into a single vortex, as shown in Figure \ref{figure1} (f).
\begin{figure}[htp]
    \centering
    \subfigure[The vorticity at time $ t = 0.01 $]{\includegraphics[trim = .1cm .1cm .1cm .1cm, clip=true,width=0.38\textwidth,height=0.31\textwidth]{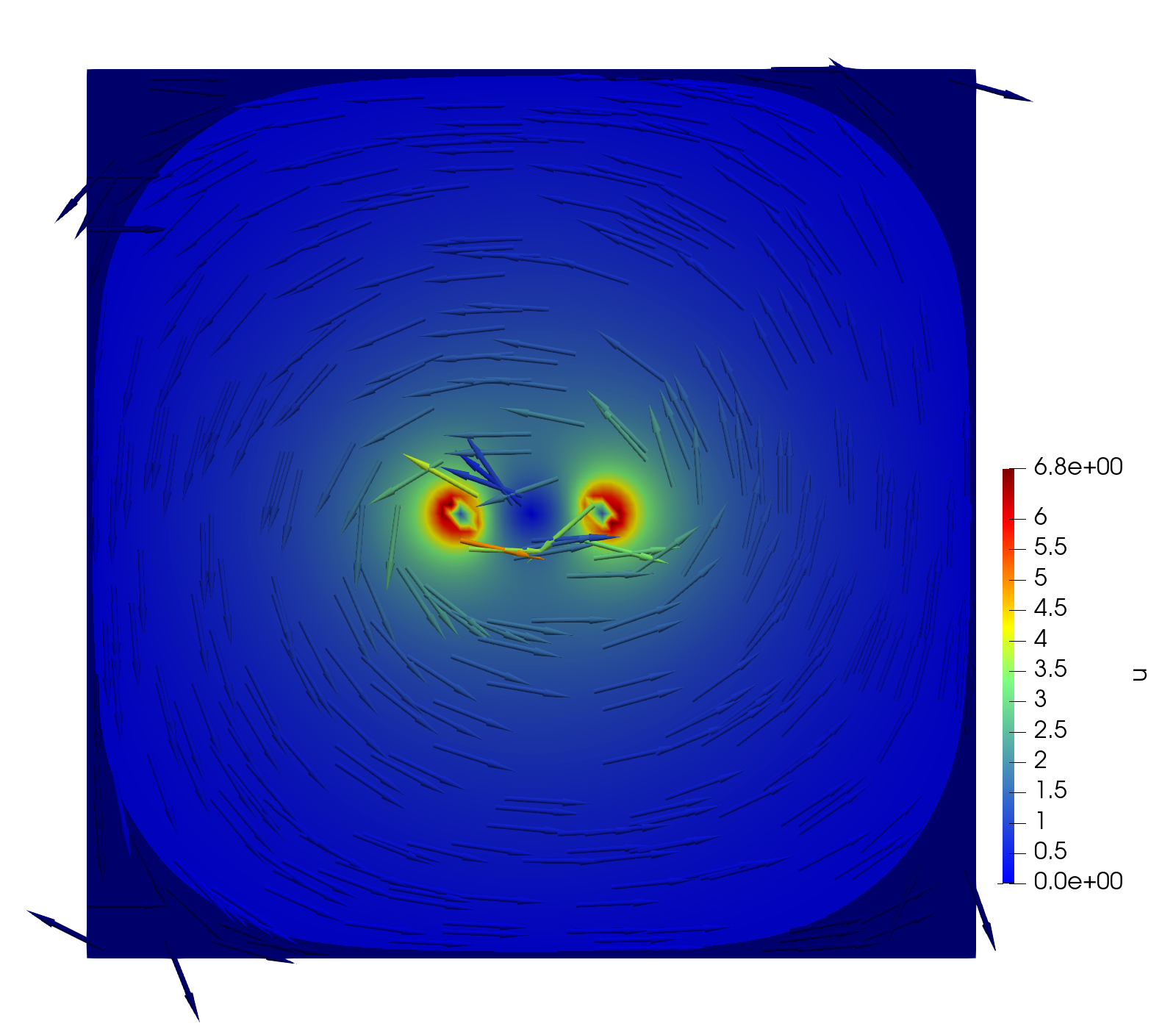}}
    \subfigure[The vorticity at time $ t = 0.1 $]{\includegraphics[trim = .1cm .1cm .1cm .1cm, clip=true,width=0.38\textwidth,height=0.31\textwidth]{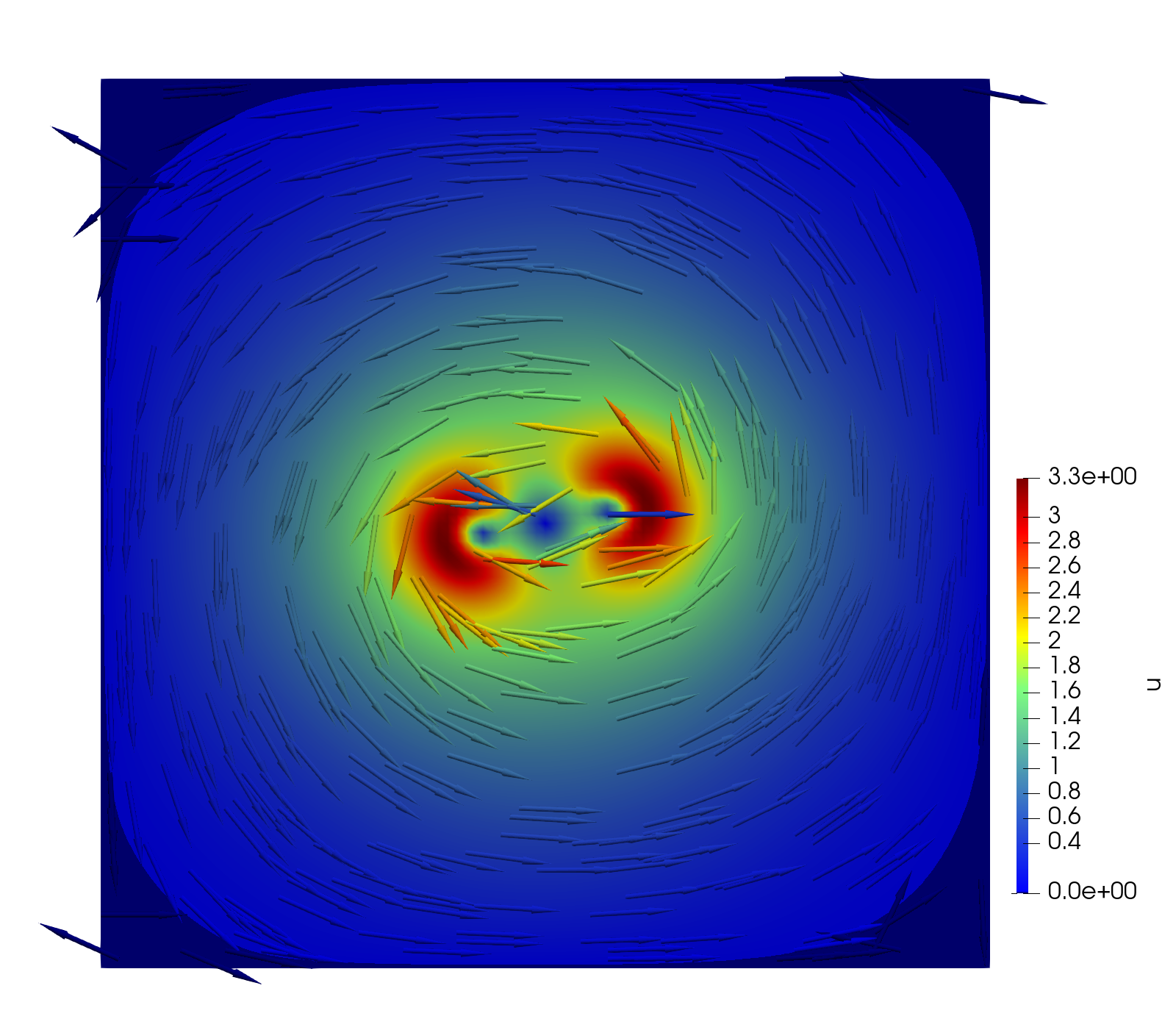}}
    \subfigure[The vorticity at time $ t = 0.3 $]{\includegraphics[trim = .1cm .1cm .1cm .1cm, clip=true,width=0.38\textwidth,height=0.31\textwidth]{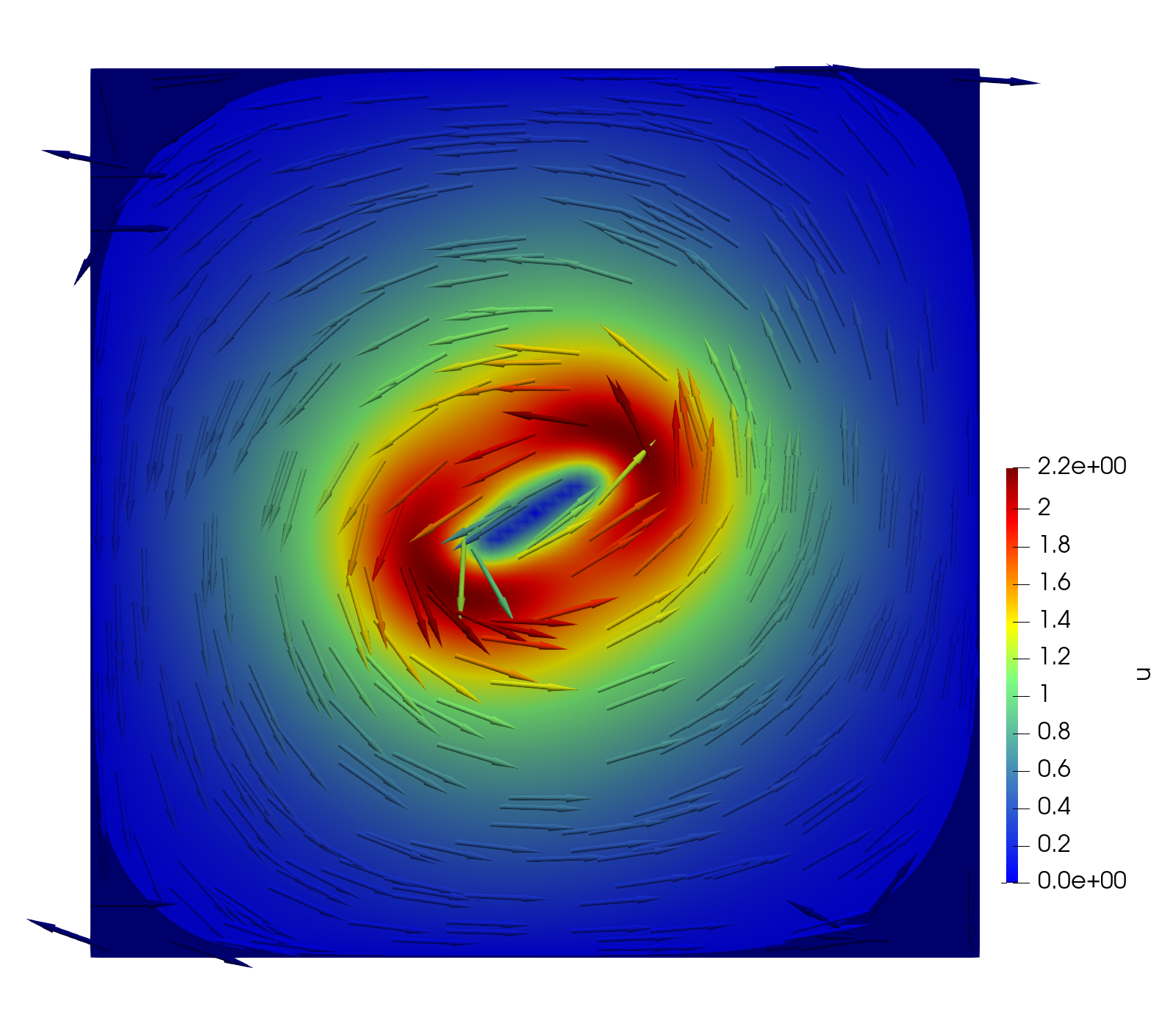}}
    \subfigure[The vorticity at time $ t = 0.5 $]{\includegraphics[trim = .1cm .1cm .1cm .1cm, clip=true,width=0.38\textwidth,height=0.31\textwidth]{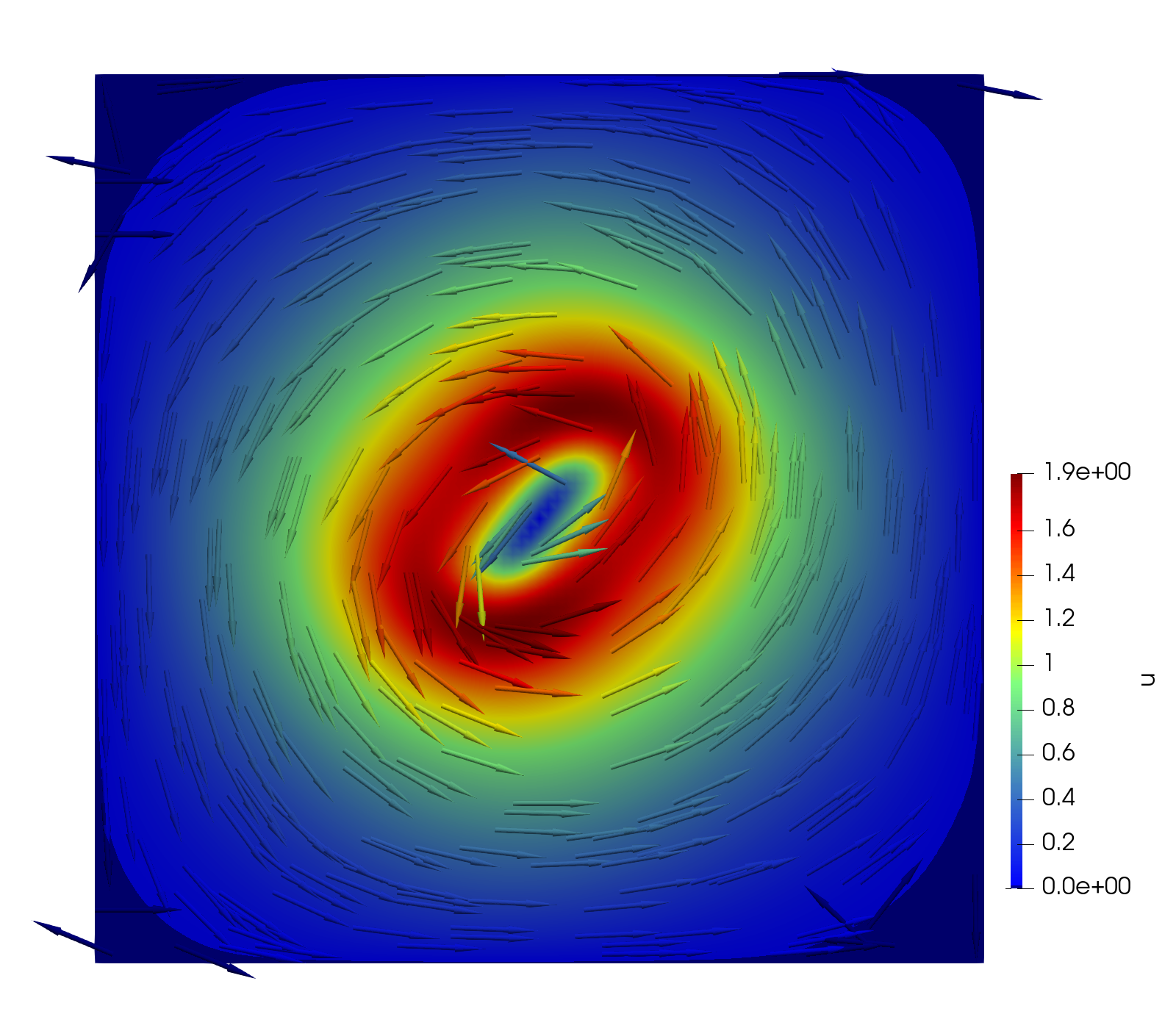}}
    \subfigure[The vorticity at time $ t = 1.0 $]{\includegraphics[trim = .1cm .1cm .1cm .1cm, clip=true,width=0.38\textwidth,height=0.31\textwidth]{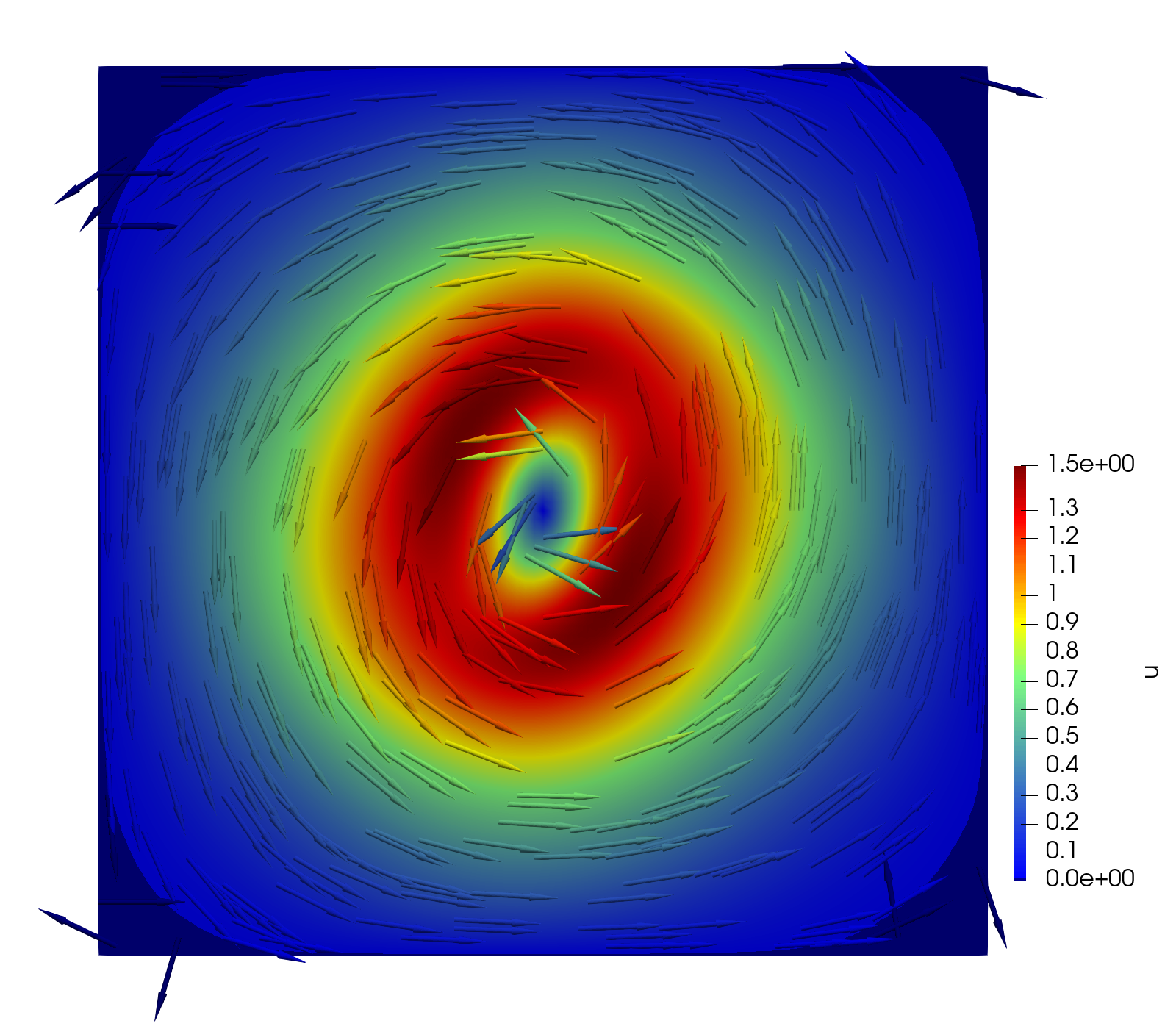}}
    \subfigure[The vorticity at time $ t = 2.0 $]{\includegraphics[trim = .1cm .1cm .1cm .1cm, clip=true,width=0.38\textwidth,height=0.31\textwidth]{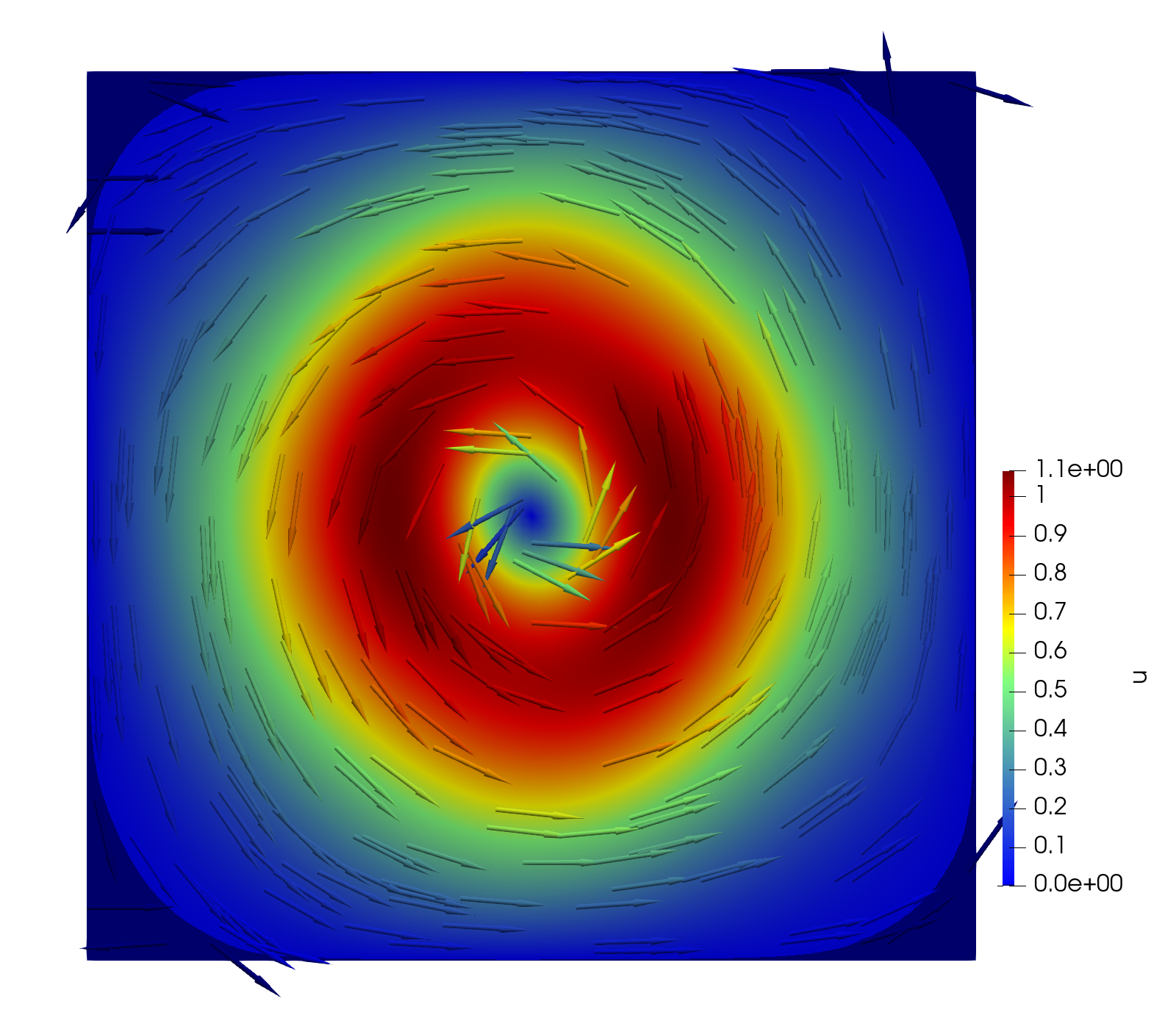}}
\caption{Isocontours of the velocity $ u $
}\label{figure1}
\end{figure}
\end{Example}

\begin{Example}[Piecewise constant initial value]\upshape
In this example, we present numerical simulation of the Navier--Stokes equations with a piecewise constant initial value in $ \Omega = (- \pi, \pi)\times (-\pi,\pi) $. The viscosity $ \nu $ is chosen to be $ 0.1 $. The initial value $ u_0 $ takes value $ (10, 0) $ when $ y >0 $, and $ (-10,0) $ when $ y < 0 $. This initial value is in $\dot L^2(\Omega)\cap H^{\frac{1}{2}-\varepsilon}(\Omega)^2$ for any $\varepsilon\in(0,\frac12)$ but not in $H^{\frac{1}{2}}(\Omega)^2 $.

We test temporal convergence at $ T = 1 $ using graded stepsizes \eqref{time1} with $\alpha = 0.76$. The reference solution $u^N_{h,\rm ref}$ is computed with $\tau= 1/1024$. Temporal errors $\|u^N_h-u^N_{h,\rm ref}\|_{L^2(\Omega)}$ in Figure \ref{figure_Rie} (a) for $\tau = 1/32, 1/64, 1 / 128,1/256$ (spatial errors negligible for sufficiently small $ h $) show second-order convergence for $\alpha = 0.76$, which is consistent with Theorem \ref{thfull}.


In Figure \ref{figure_Rie} (b), we present the spatial discretization errors $\|u^N_h-u^N_{h,\rm ref}\|_{L^2(\Omega)}$ and convergence rates for mesh sizes $ h = 2 \pi / 15, 2 \pi/30, 2 \pi / 60, 2\pi /120 $ with a sufficiently small temporal stepsize that ensures the errors from temporal discretization are negligible.
The reference solution $u^N_{h,\rm ref}$ is chosen to be the numerical solution with mesh size $h= 2 \pi/240$. We use $ P_{2} $--$ P_1 $ Taylor--Hood elements and observe that the convergence in space is second order. This aligns with the theoretical result proved in Theorem \ref{thsemi} and shows the sharpness of the convergence rate in space.

\begin{figure}[htp]
    \centering
    \subfigure[$ L^2 $ error of $ u $ from temporal discretization]{\includegraphics[trim = .1cm .1cm .1cm .1cm, clip=true,width=0.4\textwidth,height=0.35\textwidth]{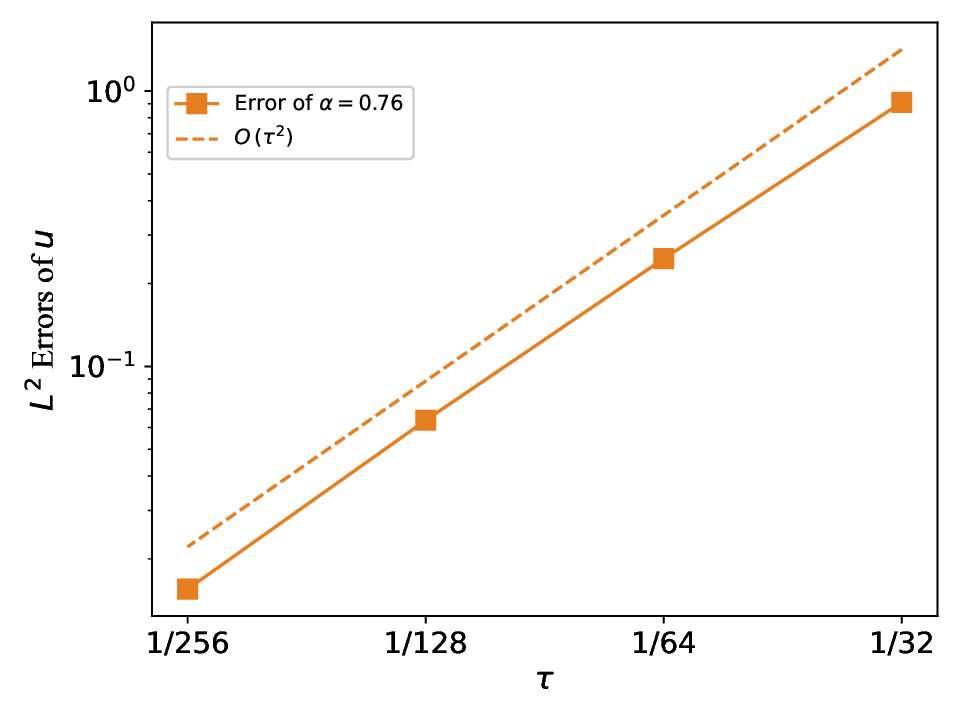}}
\qquad\quad
    \subfigure[$ L^2 $ error of $ u $ from spatial discretization]{\includegraphics[trim = .1cm .1cm .1cm .1cm, clip=true,width=0.4\textwidth,height=0.35\textwidth]{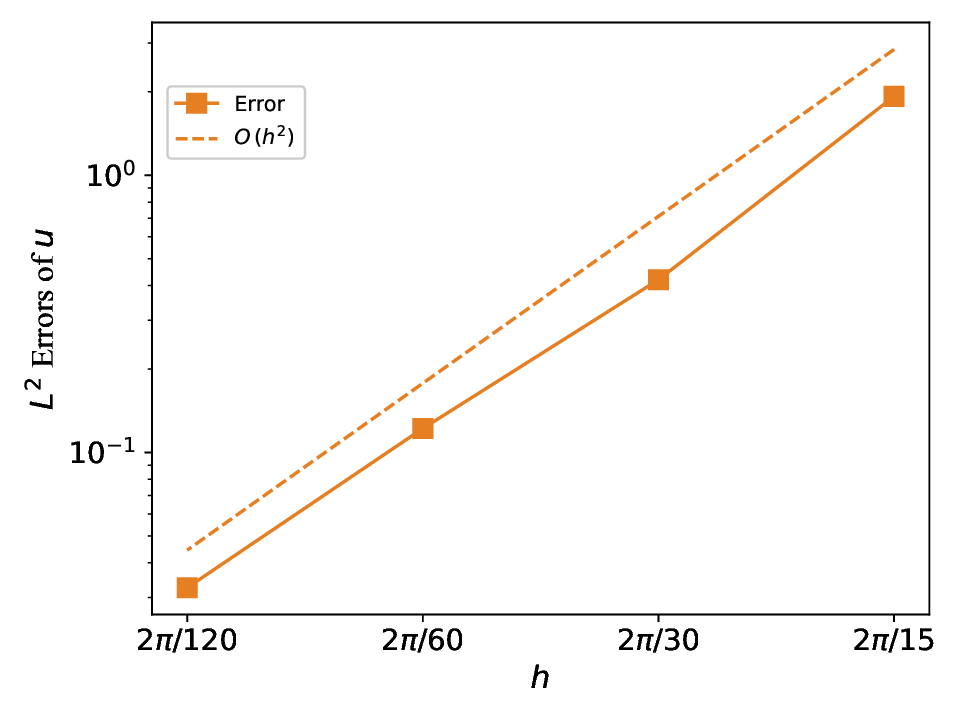}}
    \caption{$ L^{2} $ errors of $ u $
}\label{figure_Rie}
\end{figure}


The evolution of the velocity field $u$ computed by the proposed method is illustrated at various time instances, specifically at $ t = 0,0.02,0.1,0.5,1.0,2.0 $. These visualizations are depicted in Figure \ref{figure2} with mesh size $ h = 0.06 $ and time stepsize $\tau = 0.01$. The parameter $ \alpha $ is chosen to be $ 0.76 $. Notably, the discontinuous initial velocity field gradually becomes smooth as time evolves.

\begin{figure}[htp]
    \centering
    \subfigure[The velocity at time $ t = 0 $]{\includegraphics[trim = .1cm .1cm .1cm .1cm, clip=true,width=0.38\textwidth,height=0.31\textwidth]{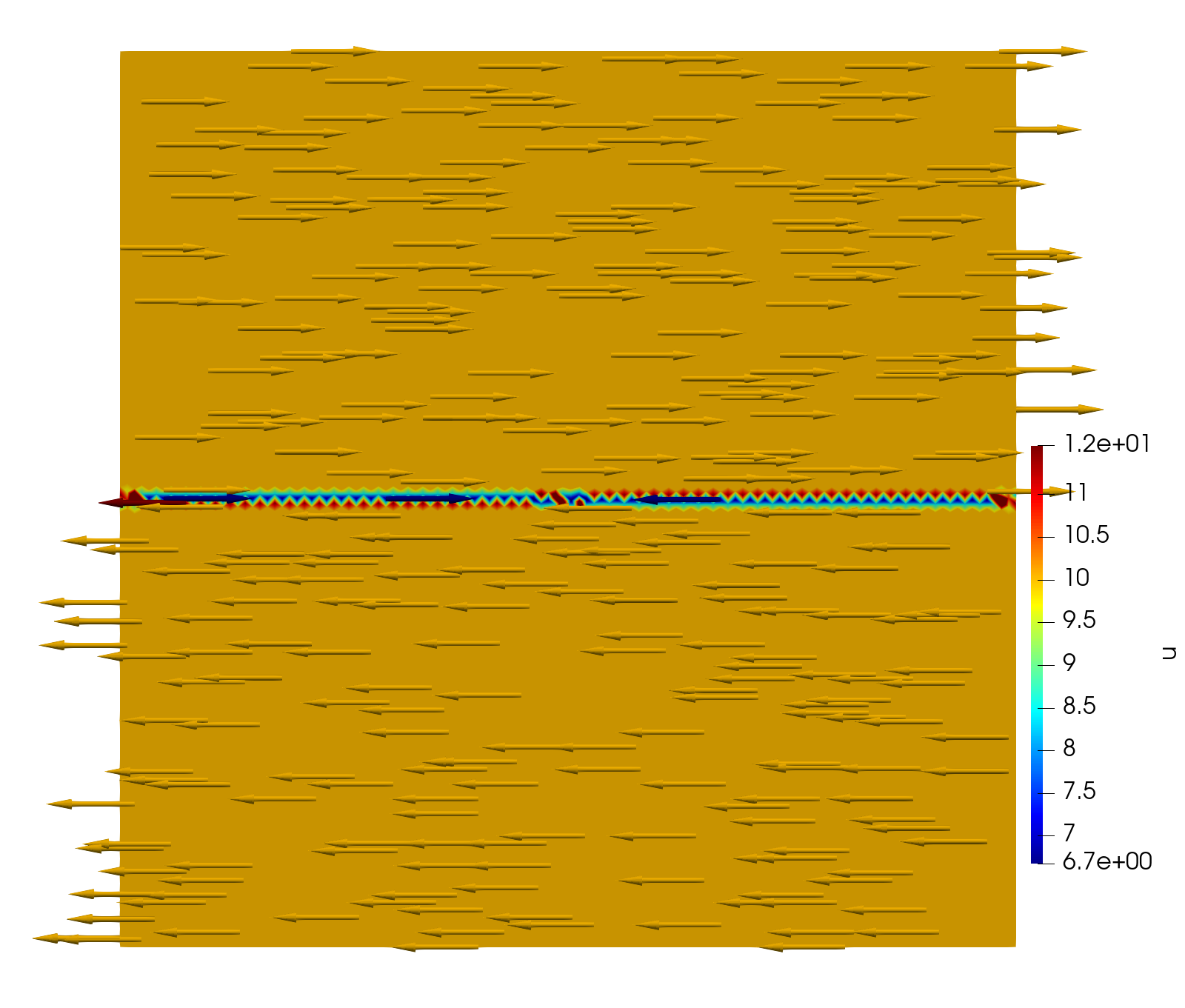}}
    \subfigure[The velocity at time $ t = 0.02 $]{\includegraphics[trim = .1cm .1cm .1cm .1cm, clip=true,width=0.38\textwidth,height=0.31\textwidth]{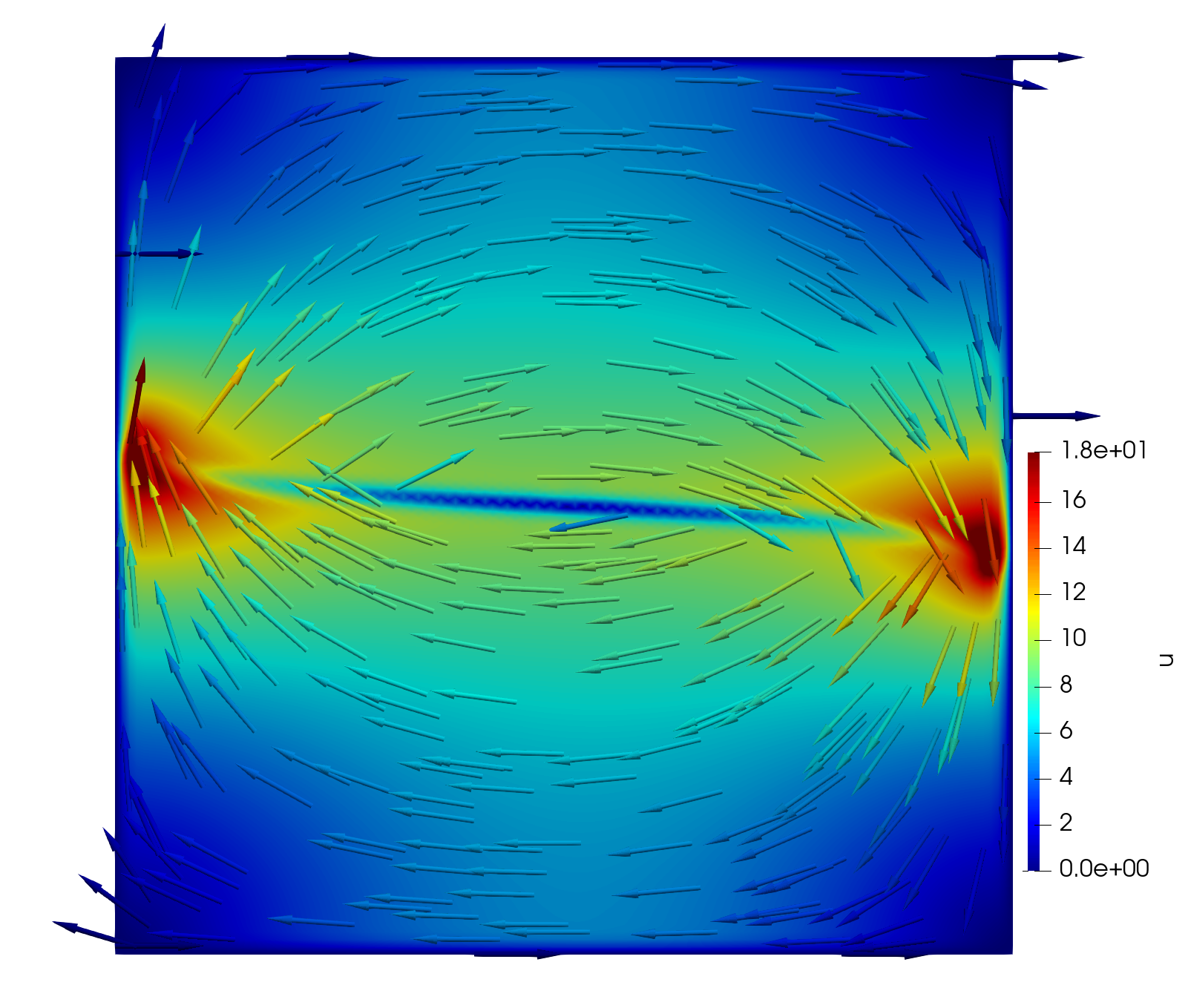}}
    \subfigure[The velocity at time $ t = 0.1 $]{\includegraphics[trim = .1cm .1cm .1cm .1cm, clip=true,width=0.38\textwidth,height=0.31\textwidth]{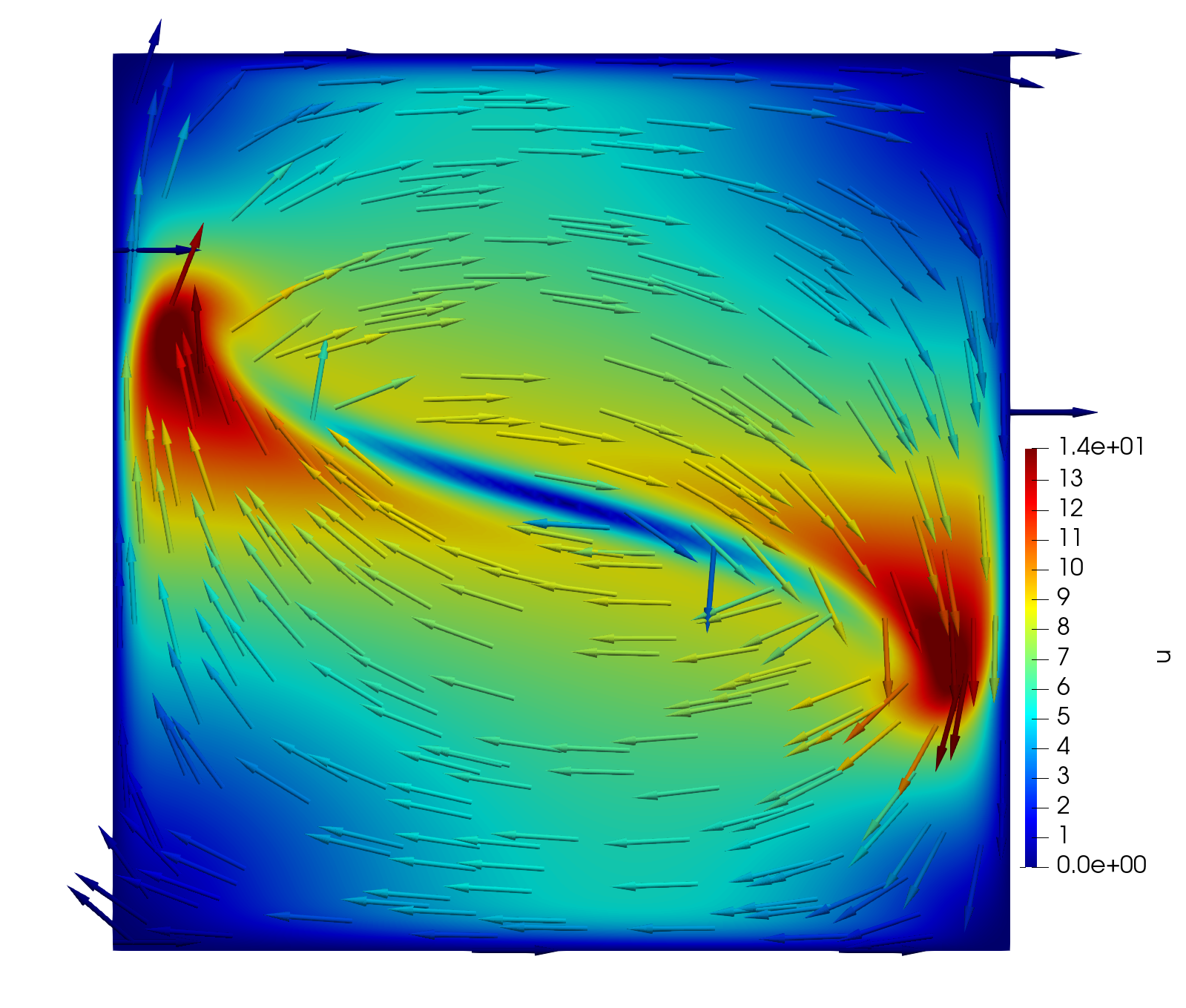}}
    \subfigure[The velocity at time $ t = 0.5 $]{\includegraphics[trim = .1cm .1cm .1cm .1cm, clip=true,width=0.38\textwidth,height=0.31\textwidth]{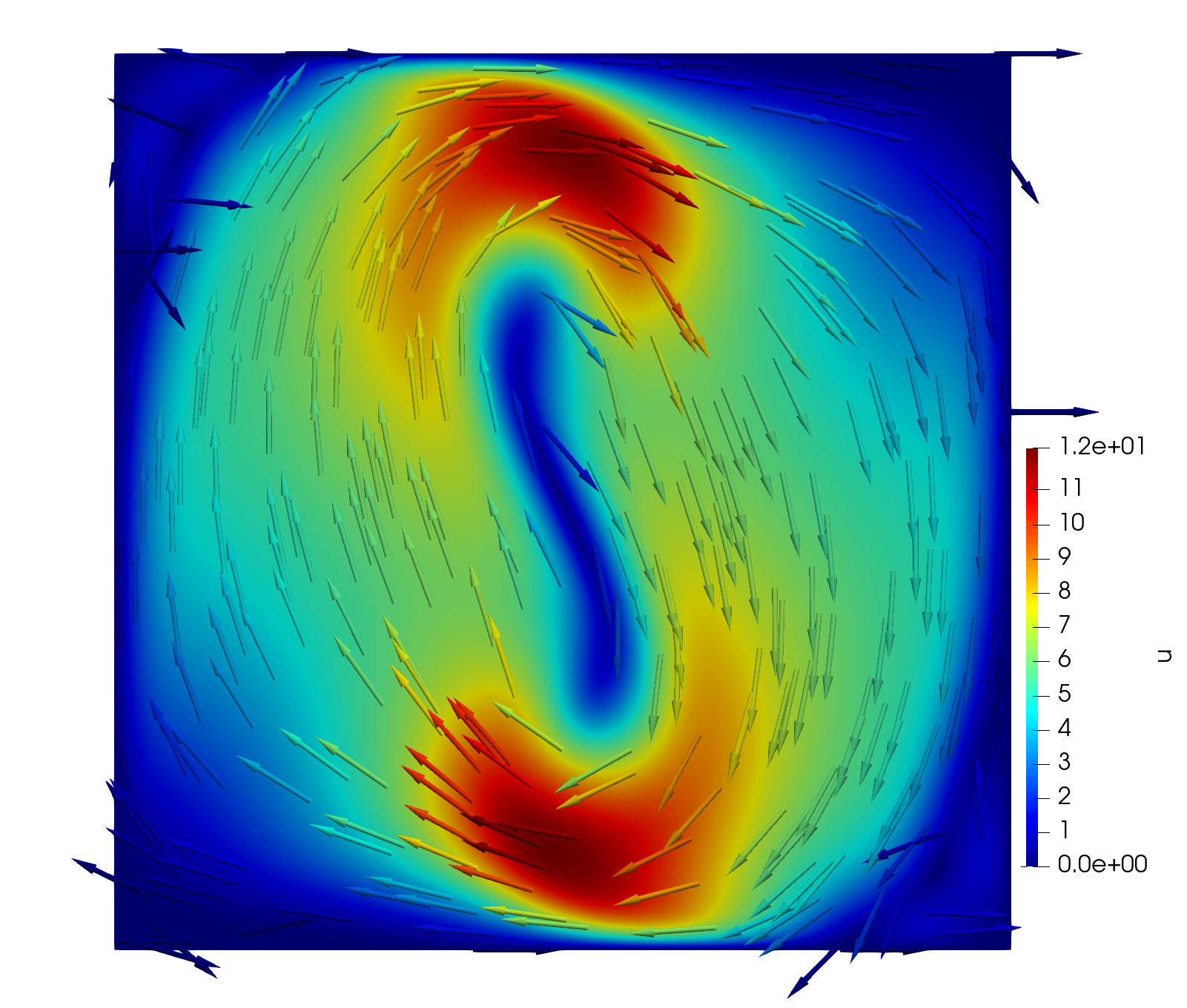}}
    \subfigure[The velocity at time $ t = 1.0 $]{\includegraphics[trim = .1cm .1cm .1cm .1cm, clip=true,width=0.38\textwidth,height=0.31\textwidth]{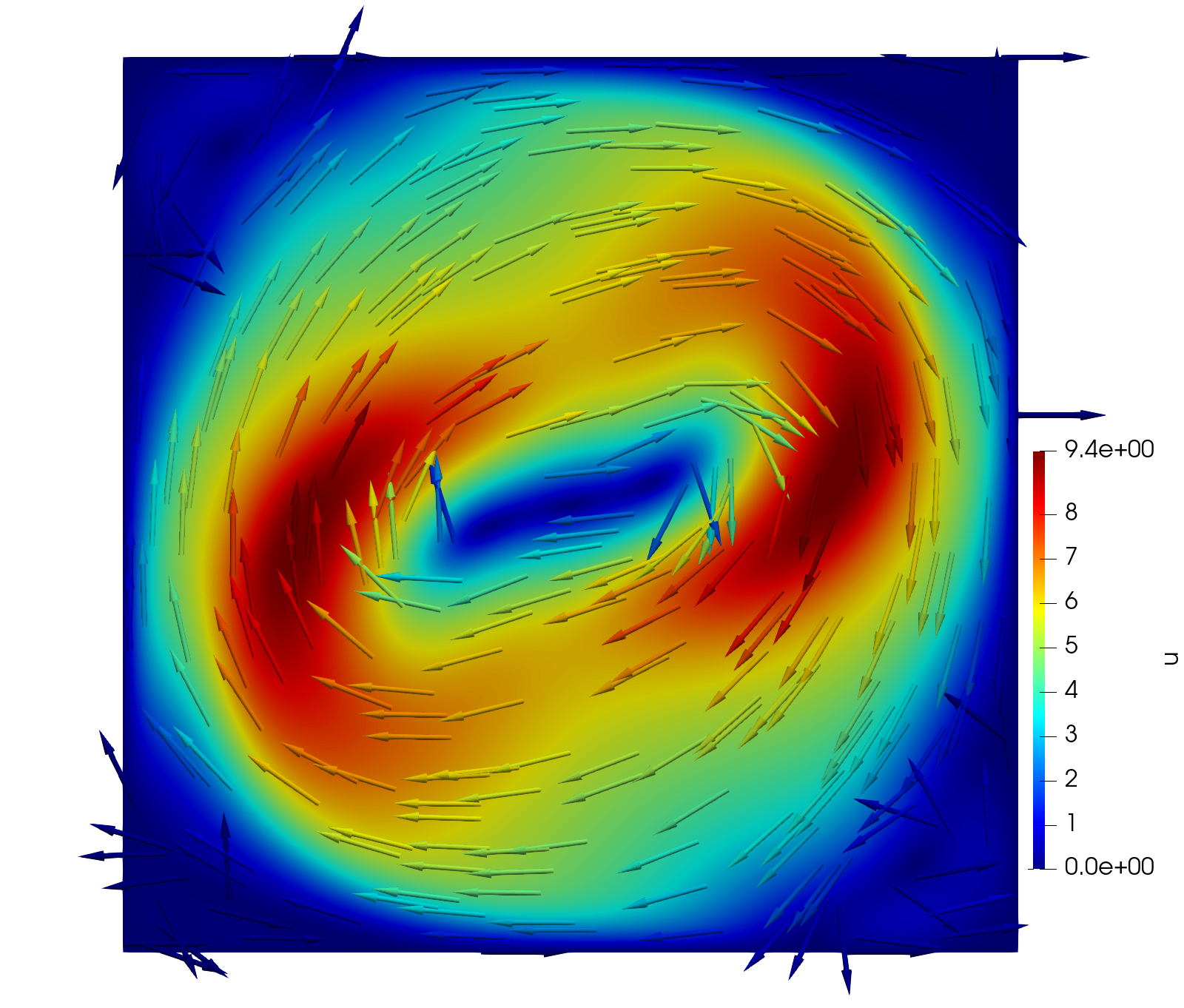}}
    \subfigure[The velocity at time $ t = 2.0 $]{\includegraphics[trim = .1cm .1cm .1cm .1cm, clip=true,width=0.38\textwidth,height=0.31\textwidth]{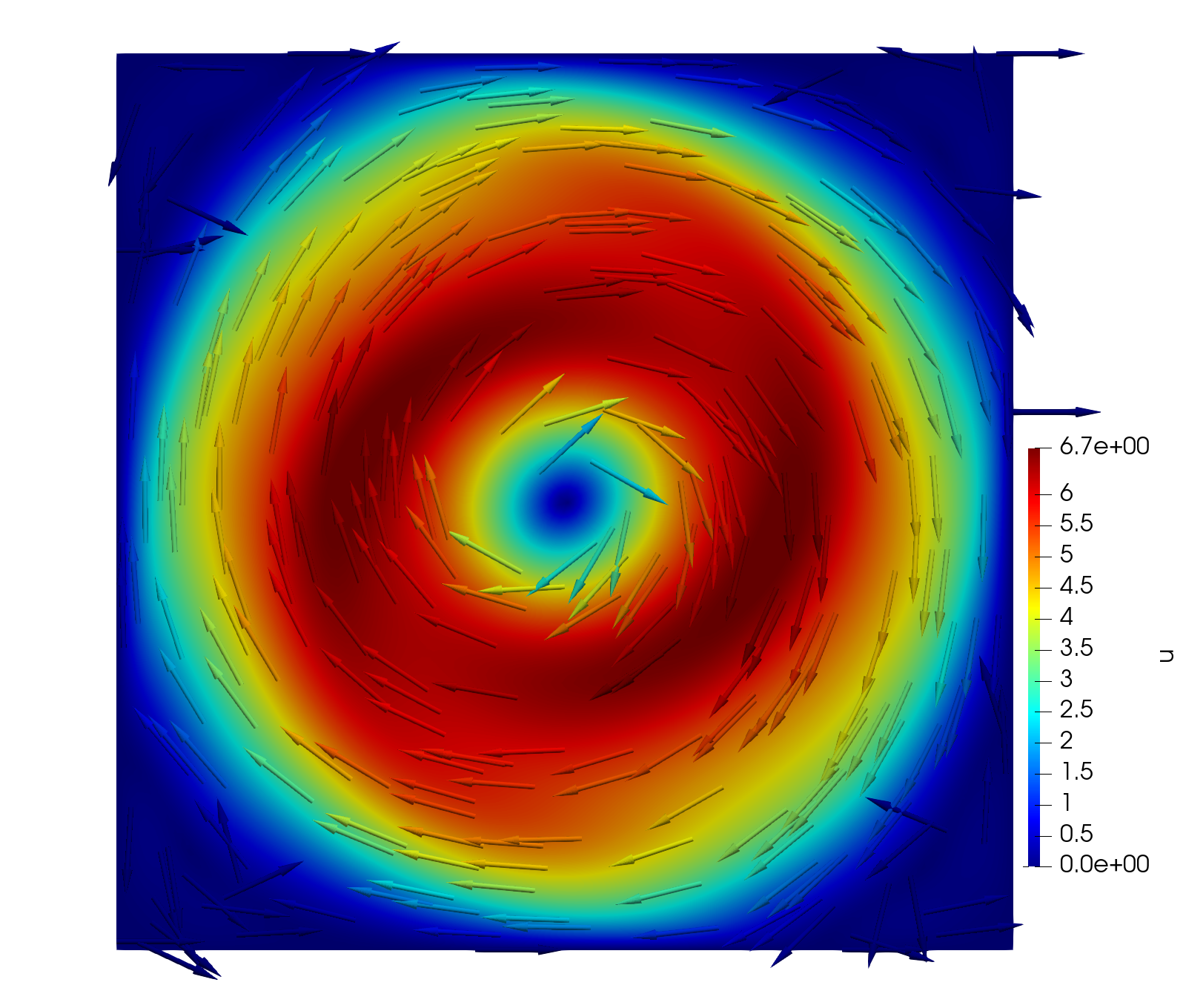}}
\caption{Isocontours of the velocity $ u $
}\label{figure2}
\end{figure}

\end{Example}

\section{Conclusion}%
\label{conclusion}
In this work, we have studied numerical treatment for  the two-dimensional Navier-Stokes equations with $L^2$ initial data. To date, the best convergence results obtained for fully discrete schemes are limited to first-order accuracy in both time and space, which are suboptimal in space and considered low-order in time.
We have proposed a fully discrete scheme that utilizes the finite element method for spatial discretization and a implicit-explicit Runge--Kutta method in conjunction with graded time meshes. By employing discrete semigroup techniques, sharp regularity estimates, negative norm estimates and the $L^2$ projection onto the divergence-free Raviart--Thomas element space, we have demonstrated that the proposed scheme attains second-order convergence in both space and time.  The argument presented in this paper could be further extended to higher-order implicit-explicit Runge--Kutta schemes. The numerical results are consistent with the theoretical analysis and demonstrate the sharpness of convergence order.

\section*{Acknowledgements}
The work of B. Li is supported in part by the Hong Kong Research Grants Council (GRF Project No. 15306123) and an internal grant of The Hong Kong Polytechnic University (Project ID: P0045404). The work of H. Zhang is supported by the National Natural Science Foundation of China (Project Nos. 12120101001,12371447,12171284), the Natural Science Foundation of Shandong Province (Project Nos. ZR2021ZD03), and the Hong Kong Research Grants Council (GRF Project No. 15301321). The work of Z. Zhou is supported by National Natural Science Foundation of China (12422117),  the Hong Kong Research Grants Council (GRF Project No. 15303122) and an internal grant of Hong Kong Polytechnic University (Project ID: P0038888).

\bibliographystyle{abbrv}

\end{document}